\theoremstyle{plain}
\newtheorem*{theorem*}{Theorem}
\newtheorem{theorem}{Theorem}[section]
\newtheorem{proposition}[theorem]{Proposition}
\newtheorem{corollary}[theorem]{Corollary}
\newtheorem{lemma}[theorem]{Lemma}
\theoremstyle{definition}
\newtheorem*{definition*}{Definition}
\newtheorem{definition}[theorem]{Definition}
\newenvironment{proof*}{\vskip 2mm\noindent {}}
\newcommand{\nc}{\newcommand}
\newcommand{\C}{{\mathbb C}}
\newcommand{\N}{{\mathbb N}}
\newcommand{\D}{{\mathbb D}}
\newcommand{\mD}{\mathcal{D}}
\newcommand{\Hol}{\operatorname{Hol}}
\newcommand{\Har}{\operatorname{Har}}
\newcommand{\diz}{{(1-|z|)}}
\newcommand{\dizz}{{1-|z|^2}}
\newcommand{\eit}{{e^{i\theta}}}
\renewcommand{\aa}{{A^{-\alpha}}}
\newcommand{\hid}{\mathcal H^\infty }
\newcommand{\nev}{\mathcal N}
\newcommand{\smi}{\mathcal N_+ }
\nc{\beqa}{\begin{eqnarray*}}
\nc{\eeqa}{\end{eqnarray*}}
\renewcommand{\N}{{\mathcal N}}
\renewcommand{\H}{{\mathcal H}}
\renewcommand{\Re}{\operatorname{Re}}
\author{Xavier Massaneda \& Pascal J. Thomas}
\address{Xavier Massaneda\\ Departament de Matem\`atiques i Inform\`atica,
Universitat  de Barcelona, Gran Via 585, 08007-Bar\-ce\-lo\-na, Catalonia}
\email{xavier.massaneda@ub.edu}
\address{P.J. Thomas\\
Institut de Math\'ematiques de Toulouse; UMR5219 \\
Universit\'e de Toulouse; CNRS \\
UPS, F-31062 Toulouse Cedex 9, France} 
\email{pascal.thomas@math.univ-toulouse.fr}
\date{\today}
\keywords{Nevanlinna class, interpolating sequences, Corona theorem, sampling 
sets,   Smirnov class}
\title[From $\H^\infty$ to $\mathcal N$]{From $\H^\infty$ to $\mathcal N$. 
Pointwise properties and algebraic structure\\
in the Nevanlinna class}
\thanks{First author supported by the Generalitat de Catalunya 
(grant 2017 SGR 359) and the Spanish Ministerio de Ciencia,  
Innovaci\'on y Universidades (project MTM2017-83499-P)}
\begin{document}

\begin{abstract} 
This survey shows how, for
the Nevanlinna class $\N$ of the unit disc, 
 one can define and often characterize the analogues of well-known objects and 
properties
related to the algebra of bounded analytic functions
$\H^\infty$: interpolating sequences, Corona theorem, sets of determination,
stable rank, as well as the more recent notions of Weak Embedding Property
and threshold of invertibility for quotient algebras.
The general rule we observe is that a given result for
$\H^\infty$ can be transposed to $\N$ by replacing uniform bounds  
by a suitable control by positive harmonic functions.
We show several instances where this rule applies, as well as some exceptions. 
We 
also briefly discuss the situation for the related Smirnov class. 
\end{abstract}

\maketitle

\section{Introduction}

\label{intro}
\subsection{The Nevanlinna class}
The class $\H^\infty$ of bounded holomorphic functions on the unit $\D$ disc 
enjoys a wealth of analytic and algebraic properties, which have been
explored for a long time with no end in sight, see e.g. \cite{Ga}, 
\cite{Nik02}. 
These last few years, some similar properties have been explored for the 
Nevanlinna class, a much larger algebra which is in some respects a natural 
extension of $\hid$. 
The goal of this paper is to survey those results.  

For $\H^\infty$, and for a function algebra in general, under the heading 
``pointwise properties'' we mean the 
characterization of zero sets, interpolating sequences, and sets of 
determination; while the (related) 
aspects of algebraic structure of $\H^\infty $ we are interested in concern its 
ideals: the Corona theorem,
which says that $\D$ is dense in the maximal ideal space of $\H^\infty $, 
the computation of stable rank, and the determination of invertibility in a 
quotient algebra 
from the values of an equivalence class over the set where they coincide.

According the most common definition, the Nevanlinna class is the algebra of 
analytic functions
\[
 \N=\bigl\{f\in\Hol(\D) : \sup_{r<1} \int_0^{2\pi} \log_+\left| f(r\eit)\right| 
\frac{d\theta}{2\pi} < \infty.\bigr\}.
\]

We will take a different --but, of course, equivalent-- perspective. 
One way to motivate the introduction of this algebra is to consider the 
quotients
$f/g$ when $f, g \in \hid$ and $f/g$ is holomorphic. 
Without loss of generality we may assume $\|f\|_\infty, \|g\|_\infty \le 1$.
Once we factor out the 
common zeroes
of $f$ and $g$ using a Blaschke product (see Section~\ref{preliminaries}), we 
may assume that 
$g(z)\neq 0$ for any $z\in\D$. 
Then $|g|=\exp (-h)$, where $h$ is a positive harmonic function on the disc, and
$\log |f/g| \le h$. We thus make have the following definition, which will 
prove more useful in what follows. 


\begin{definition}
\label{defnev}
A function $f$ holomorphic on $\D$ is in the \emph{Nevanlinna Class}  $\mathcal 
N$, if and only if 
$\log |f|$ admits a positive harmonic majorant.
\end{definition}

We denote by $\Har_+(\D)$ the cone of positive (nonnegative) harmonic 
functions in the unit disc.
For $z\in\mathbb D$ and $\theta\in[0,2\pi)$ let the \emph{Poisson kernel}
\[
P_z(\eit):= \frac{1}{2\pi} \frac{\dizz}{|1-\eit \bar z|^2}.
\]
It is well-known that given $\mu$ a positive finite measure on $\partial \D$, 
its \emph{Poisson integral} 
\[
\mathcal P[\mu](z):= \int_0^{2\pi} P_z(\eit) d\mu(\theta)
\]
belongs to $\Har_+(\mathbb D)$, and reciprocally, any $h\in \Har_+(\mathbb D)$
is the Poisson integral of a positive measure on the circle.
Since any finite real measure is the difference
of two positive measures, 
decomp of measures
and since any harmonic function which admits a positive 
harmonic majorant is 
the difference of two positive harmonic functions,
the Poisson integral of  finite real measures on the circle coincide with 
difference
of positive harmonic functions. This 
explains the equivalence of the definitions above,
and shows that any function in $\N$ is a quotient $f/g$ of bounded analytic 
functions.

The main goal of this survey is to provide, when possible,  analogues for $\N$ 
of several well-known results on 
$\H^\infty$. We shall see that, in order to transfer the results 
we report on from $\H^\infty$ to $\N$, we must apply the following general 
principle. A function $f$ belongs to 
$\H^\infty$ if and only if $\log_+|f|$ is uniformly bounded above, while 
$f\in\N$ 
when  there exists $h\in\Har_+(\D)$ such that $\log_+|f|\leq h$. Accordingly, 
in the hypotheses (and sometimes the conclusions) of the theorems regarding 
$\H^\infty$
we expect to replace uniform bounds by positive harmonic majorants.  This turns 
out to yield very 
natural results and, sometimes, natural problems.

A function with a positive harmonic majorant is harder to grasp intuitively
than a bounded function.  Admitting a positive harmonic 
majorant is definitely a restriction; for instance, Harnack's inequalities 
\eqref{harnack}
show that a positive harmonic function cannot grow faster that $C\diz^{-1}$ as
$|z|\to 1$.  But there is no easy way to recognize whether a given nonnegative
function on the disc has a harmonic majorant, although some conditions are
given in \cite{HMNT}, \cite{BNT}. 

To give a very simple example, suppose that
we are given a sequence $(z_k)_k \subset \D$ and positive numbers $(v_k)_k$.
Define a function $\varphi$ on $\D$ by 
$\varphi(z_k)=v_k$, $\varphi(z)=0$
when $z\notin (z_k)_k $. It follows from Harnack's inequality that a necessary 
condition for $\varphi$ to admit
a harmonic majorant is $\sup_k (1-|z_k|) v_k < \infty$. On the other hand, a 
sufficient condition is that
$\sum_k (1-|z_k|) v_k < \infty$, and the gap between the two conditions cannot 
be improved without additional 
information about the geometry of the sequence $(z_k)_k$ (see 
Corollary~\ref{interpolation-geometric} in
Section~\ref{interpolation}).

The paper is structured as follows. In the next Section we finish describing 
the 
setup and gather several well-known properties of Nevanlinna functions. 
In particular, we recall that from the point of view of the topology $\N$ is 
far worse than $\H^\infty$. Even though there is a well-defined distance $d$ 
that makes 
$(\N,d)$ a metric space, this is not even a topological vector space. 
This is an important restriction, since all the nice theorems about Banach 
spaces available for $\H^\infty$ are no longer available. The canonical 
factorization of Nevanlinna functions, 
similar to that of bounded analytic functions, makes up for these shortcomings. 

Section~\ref{interpolation} studies the analogue in $\N$ of the interpolation 
problem  
in $\H^{\infty}$, which was completely solved in a famous theorem of Carleson 
(see e.g. \cite{Ga}). 
The first issue is to give the appropriate definition of Nevanlinna 
interpolation. Once this is 
done we show that the general principle described above, applied to the 
various characterizations of $\H^\infty$-interpolating sequences, provides also 
 
characterizations of Nevanlinna interpolating sequences.
The same principle works well when describing finite unions 
of interpolating sequences.

An old theorem of Brown, Shields and Zeller \cite{BrShZe} shows that the 
$\H^\infty$-sampling sequences, i.e. sequences $(z_k)_k\subset \mathbb D$ such 
that
\[
 \|f\|_\infty=\sup_k |f(z_k)|\quad \textrm{for all $f\in \H^\infty$},
\]
are precisely those for which the non-tangential accumulation set of $(z_k)_k$ 
on $\partial\D$ has full measure. 
In Section~\ref{sampling} we set the analogous problem for $\N$, show the 
complete solution given by S. Gardiner, 
and provide some examples obtained by previous, more computable, conditions 
given by the authors.

R. Mortini showed that the corona problem in $\N$ can be solved analogously to 
Carleson's classical result for $\H^\infty$, provided that the substitution 
suggested by our guiding principle is performed.
In Section~\ref{ideals} we show this and a result about finitely generated 
ideals, which is the natural counterpart of
a theorem by Tolokonnikov \cite{Tol83}. We also show that, in contrast to the 
$\H^\infty$-case, 
the stable rank of the algebra $\N$ has to be strictly bigger that $1$.

Section~\ref{wep} studies the Corona problem in quotient algebras.  Gorkin, 
Mortini and 
Nikolski \cite{GMN2} showed that that $\H^\infty$ quotiented by an inner 
function has the 
Corona property if and only if it has the so-called Weak Embedding Property 
(essentially, the inner function is uniformly big away, by a fixed distance, 
from its zeroes; see Theorem~\ref{bounded-wep}(c)). 
The analogue is also true for $\N$, but unlike in the bounded case, this is 
equivalent to the inner function being a Blaschke product of a finite number of 
Nevanlinna interpolating sequences. We are also interested in the determination 
of 
invertibility in a quotient algebra from the values of an equivalence class 
over the 
set where they coincide.

The final Section~\ref{smirnov} is devoted to describe the results for the 
Smirnov 
class $\N_+$, the subalgebra of $\N$ consisting of the functions $f$ for which
belongs to the Smirnov 
$\log_+|f|$ has a \emph{quasi-bounded} harmonic majorant, i.e. a 
majorant of type $\mathcal P[w]$, where $w\in L^1(\partial\mathbb D)$. A 
posteriori $\log|f(z)|\leq\mathcal P[\log |f^*|](z)$, $z\in\mathbb D$. The 
general rule,
with few exceptions, is that the statements about $\N$ also hold for $\N_+$ as 
soon as
the bounded harmonic majorants are replaced by quasi-bounded harmonic majorants.

A final word about notation. Throughout the paper  $A\lesssim B$ will mean that 
there is an absolute constant $C$ such that $A \le CB$, and we write $A\approx 
B$ 
 if both $A\lesssim B$ and $B\lesssim A$.

\section{Preliminaries}\label{preliminaries}

It has been known for a long time that functions in the Nevanlinna class
have the same zeroes as bounded functions. These are the sequencies 
$Z:=(z_k)_{k \in \mathbb N} \subset 
\D$ satisfying the \emph{Blaschke condition}  $\sum_k (1-|z_k|) <\infty$. 
Note that when points in the sequence $(z_k)_k$ are repeated, we understand that
$f$ must vanish with the corresponding order. 

That this is necessary is an easy application of Jensen's formula. 
To prove the reverse implication just notice that
when $(z_k)_k$ is a Blaschke sequence the 
the associated \emph{Blaschke product} 
$$
B_Z (z) := \prod_{k\in \mathbb N} \frac{\bar z_k}{|z_k|}\frac{z_k-z}{1-z\bar 
z_k|},
$$
converges and yields a holomorphic function with 
non-tangential limits of modulus $1$
almost everywhere on the unit circle $\partial \D$.
We simply write $B(z)$ when no confusion can arise.

\begin{theorem}\cite[Lemma 5.2, p. 69]{Ga} \label{zeroes}
Let $f\in \nev$, $f\not \equiv 0$, $Z:=f^{-1}\{0\}$. Then $B_Z$ converges,
and $g:=f/B_Z\in\nev$. Moreover, $\log |g|$ is the least harmonic majorant
of $\log|f|$.
\end{theorem}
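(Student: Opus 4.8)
The plan is to split the statement into three assertions and treat them in turn, the substantive one being the minimality of $\log|g|$. First, the fact that $B_Z$ converges is immediate once we know that $Z$ satisfies the Blaschke condition, and this in turn follows from the classical Jensen-formula argument alluded to just before the statement: if $f\in\N$ with $f\not\equiv0$, then $\log|f|$ has a positive harmonic majorant $u$, and applying Jensen's formula on circles $|z|=r$ together with the bound $\int\log|f(re^{i\theta})|\,d\theta/2\pi\le u(0)$ yields $\sum_k\log(1/|z_k|)<\infty$, hence $\sum_k(1-|z_k|)<\infty$. So $B_Z$ is a well-defined Blaschke product with $|B_Z|<1$ on $\D$ and radial limits of modulus $1$ a.e.\ on $\partial\D$.

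Next, $g:=f/B_Z$ is holomorphic on $\D$ because every zero of $B_Z$ is a zero of $f$ of at least the same order (here the convention about repeated points in $Z$ is exactly what is needed), and $g$ has no zeros in $\D$. To see $g\in\N$: since $|B_Z|\le1$ we have $\log|g|=\log|f|-\log|B_Z|\ge\log|f|$, which is the wrong direction, so instead I would argue that $g$ is zero-free, hence $\log|g|$ is harmonic on $\D$, and then show directly that this harmonic function is dominated by a positive harmonic function. The clean way is to establish the minimality statement first and deduce $g\in\N$ from it.

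So the heart of the matter: let $h$ be any harmonic majorant of $\log|f|$ on $\D$ (e.g.\ a positive one exists by hypothesis; I will show $\log|g|\le h$ for \emph{every} harmonic majorant, which gives both $\log|g|\in\Har(\D)$ bounded above by the positive majorant, hence $g\in\N$, and the least-majorant property). Consider, for $0<r<1$, the finite Blaschke product $B_r$ formed from those $z_k$ with $|z_k|<r$, and set $g_r:=f/B_r$. Then $\log|g_r|$ is harmonic near $\overline{\D}$ away from the zeros of $f$ outside $\{|z_k|<r\}$, and on the circle $\partial\D$ we have $\log|g_r|=\log|f|$ since $|B_r|=1$ there; more carefully, one works on a slightly smaller circle $|z|=\rho$ with $\rho\uparrow1$ and uses that $\log|B_r(z)|\to0$ uniformly on compact subsets as... — the point is that $\log|g_r|$ is subharmonic on $\D$ (it equals $\log|f|-\log|B_r|$ with $\log|B_r|$ harmonic off finitely many points and $\le0$), bounded above near $\partial\D$ by the boundary behavior of $\log|f|$, so by the maximum principle applied to the subharmonic function $\log|g_r|-h$ one gets $\log|g_r|\le h$ on $\D$. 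Finally let $r\to1$: $B_r\to B_Z$ locally uniformly on $\D$, hence $\log|g_r|\to\log|g|$ pointwise on $\D\setminus Z$, giving $\log|g|\le h$ there, and by continuity on all of $\D$. Taking $h$ to be a positive harmonic majorant of $\log|f|$ shows $g\in\N$; and since $\log|g|$ is itself a harmonic majorant of $\log|f|$ (as $|f|=|g||B_Z|\le|g|$) that is dominated by every other one, it is the least harmonic majorant.

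The main obstacle is the passage to the boundary in the maximum-principle step: $\log|f|$ need not extend continuously to $\partial\D$, so one cannot naively compare boundary values. The standard fix is to apply the maximum principle to $\log|g_r|-h$ on the disc $\{|z|<\rho\}$ and let $\rho\uparrow1$, using that on $\{|z|=\rho\}$ one has $\log|g_r|=\log|f|-\log|B_r|\le h-\log|B_r|$ and $\log|B_r(z)|\to0$ uniformly on $\{|z|=\rho\}$ as $\rho\to1$ for $r$ fixed — more precisely one should fix $r$, note $|B_r|$ has a positive lower bound on each $\{|z|=\rho\}$ with $\rho$ close to $1$ and $|B_r|\to1$ there, so $\sup_{|z|=\rho}(\log|g_r|-h)\le\sup_{|z|=\rho}(-\log|B_r|)\to0$; this requires a little care but is routine. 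An alternative, perhaps cleaner, route is to invoke the Riesz decomposition / canonical factorization directly, but since the theorem is being used to \emph{build} that machinery, the self-contained subharmonic argument above is preferable.
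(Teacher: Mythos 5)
Your proof is correct, and it is essentially the standard argument one finds in Garnett's book (which the paper cites rather than reproving): establish the Blaschke condition via Jensen's formula, then compare $\log|f/B_r|$ (with $B_r$ a finite partial Blaschke product) to an arbitrary harmonic majorant $h$ of $\log|f|$ via the maximum principle, exploiting that $|B_r|\to 1$ uniformly as $|z|\to 1$ for the finite product, and finally pass to the limit $r\to 1$ using local uniform convergence $B_r\to B_Z$. The one place where you initially hesitate — the boundary comparison, since $\log|f|$ need not extend continuously to $\partial\D$ — is resolved correctly in your last paragraph: the relevant subharmonic function $\log|g_r|-h$ is bounded above by $-\log|B_r|$, which is bounded near $\partial\D$ and tends to $0$, so the maximum principle (applied on $\{|z|\le\rho\}$ and letting $\rho\uparrow 1$, or equivalently via the $\limsup$ form for bounded-above subharmonic functions) gives $\log|g_r|\le h$. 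One small stylistic quibble: you do not need the observation that $\log|B_r|$ is ``harmonic off finitely many points and $\le 0$'' to conclude $\log|g_r|$ is subharmonic — that follows directly from $g_r$ being holomorphic — but this does not affect the argument.
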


Since $\N$ is the set of quotients $f/g$, where $f,g\in \H^\infty$ and $g$ is 
zero free, an
easy corollary is that $F\in \nev$ admits an inverse in $\nev$ if and
only if it does not vanish on $\D$.

Any $f\in\hid$ verifies $\log|f(z)|\le \mathcal P[\log|f^*|](z)$, where we 
recall that $f^*$
denotes the nontangential boundary values of $f$. Therefore if $F=f/g\in\N$, 
$F\not \equiv0$, 
$F^*(\eit)\neq 0$ almost everywhere, it admits finite nontangential boundary 
values almost everywhere, again denoted $F^*=(f/g)^*$. We will see more on this 
in Section~\ref{factorization}.

Throughout the paper we will find it useful to consider the product 
with one factor removed and write 
\[
b_{z_k}(z)= \frac{\bar z_k}{|z_k|}\frac{z_k-z}{1-z\bar z_k}\quad,\quad 
B_{k}(z):=B_{Z,k} (z) :=\frac{B_Z(z)}{b_{z_k}(z)}= \prod_{j\in \mathbb N, j\neq 
k} \frac{\bar 
z_j}{|z_j|}\frac{z_j-z}{1-z\bar z_j}.
\]

We shall often need to use the \emph{pseudohyperbolic} or \emph{Gleason} 
distance between points of the disc given by
\[
\rho(w,z):= \left| \frac{w-z}{1-z\bar w}\right|.
\]
It is invariant under automorphisms (holomorphic bijections) of the disc, and
closely related to the Poincar\'e distance. 
We shall denote by $D(a,r)$ the disk centered at $a\in\D$ and with radius $r\in 
(0,1)$, 
with respect to $\rho$, that is,
$D(a,r)=\{ z\in\D : \rho(z,a)<r\}$.

Regarding basic properties of harmonic functions, 
we will use repeatedly the well-known \emph{Harnack inequalities}: for 
$H\in\Har_+(\mathbb D)$ and $z,w\in\mathbb \D$,
\[ 
\frac{1-\rho(z,w)}{1+\rho(z,w)}\leq\frac{H(z)}{H(w)}\leq\frac{1+\rho(z,w)}{
1-\rho(z,w)}\ .
\]
In particular, taking $w=0$,
\begin{equation}\label{harnack}
H(0)\frac{1-|z|}{1+|z|}\leq H(z)\leq H(0) \frac{1+|z|}{1-|z|}º .
\end{equation}

We now collect some standard facts about the Nevanlinna class. We start with 
the natural metric
and the topolgy it defines. We show next the canonical factorization of 
Nevanlinna functions.

\subsection{Topological properties of $\N$}\label{topology}
From the point of view of the topology, $\N$ is far worse than $\H^\infty$.
One easily sees that, given $f\in\Hol(\D)$, 
 $\log(1+ |f|)$ admits a harmonic majorant if and only if $\log_+|f|$ does. 
Thus we may define a distance between $f$ and $g$ in the Nevanlinna class by 
$d(f,g)=N(f-g)$, where
\begin{equation}
\label{ndef}
N(f):=\lim_{r \to 1}\frac{1}{2\pi}\int_{0}^{2\pi}
     \log(1+|f(re^{i\theta})|)\;d\theta\ .
\end{equation}
The subharmonicity of $\log (1+|f|)$ yields the pointwise estimate
\[
(1-|z|)\log (1+|f(z)|)\leq 2N(f),
\]
which shows that convergence in the distance $d$ implies 
uniform convergence on compact sets \cite[Proposition 1.1]{SS}.

The value $N(f)$ can also be rewritten as an extremal solution to a harmonic 
majorant problem:  
\begin{equation}
\label{ndef2}
N(f)=\inf\bigl\{h(0)\, :\,\exists h\in\Har_+(\D)\mbox{ with }\log(1+|f|)\leq h 
\bigr\}\ .
\end{equation}

Although $(\N,d)$ is a metric space, it is not a topological vector space: 
multiplication by scalars fails to be continuous, and it contains many
finite-dimensional subspaces on which the induced topology is discrete. 
In fact, the largest topological vector space contained in $\nev$
is the Smirnov class $\N_+$ (see Section 8). These facts and many others are 
proved in 
\cite{SS}.

The limitations imposed by this lack of structure are important, for example, 
when studying interpolating or 
sampling
sequences (see Sections~\ref{interpolation} and \ref{sampling}), since such 
basic tools as the Open Mapping 
or the Closed Graph theorems are not available.

\subsection{Canonical factorization}\label{factorization}
To compensate for the lack of structure just mentioned, there is a canonical 
factorization of functions  both in $\H^\infty$ and $\N$ (general refe\-rences 
are 
e.g. \cite{Ga}, \cite{Nik02} or \cite{RR}). 
As a matter of fact this is at the core of our transit from $\H^\infty$ to $\N$,
although quite often in the process the existing proofs for $\H^\infty$ have to 
be redone.

As mentioned in the previous section, 
given $f\in\N$ with zero set $Z$ and  associated Blaschke product $B_Z$, 
the function $f/B_Z$ is zero-free and belongs to $\N$ as well 
(Theorem~\ref{zeroes}). 

A function $f$ is called \emph{outer} if it can be written in the form
\[
\mathcal O(z)=C \exp \left\{\int_0^{2\pi} \frac{\eit+z}{\eit-z}
\log v(\eit) d\sigma(\theta) \right\},
\]
where $|C|=1$, $v>0$  a.e.\ on $\partial\mathbb D$ and $\log v \in 
L^1(\partial\mathbb D)$. Such a
function is
the quotient $\mathcal O=\mathcal O_1/\mathcal O_2$ of two bounded outer 
functions  $\mathcal O_1,\mathcal O_2\in \H^\infty$
with  $\|\mathcal O_i\|_{\infty}\le 1$, $i=1,2$. In particular, the weight $v$
is given by the boundary values of $|\mathcal O_1/\mathcal O_2|$. Setting 
$w=\log v$,
we have
\[
\log |\mathcal O(z)|=P[w](z) =\int_0^{2\pi}  P_z(\eit) w(\eit)d\sigma(\eit).
\]
This formula allows us to freely switch between assertions about outer
functions $f$ and the associated measures $w d\sigma$.

Another important family in this context are  \emph{inner} functions: $I\in
\H^\infty$ such that $|I|=1$ almost everywhere on $\partial\D$. Any inner 
function $I$ 
can be factorized into a Blaschke product $B_Z$ carrying the zeros
$Z=(z_k)_k$  of $I$, and a singular inner function $S$ defined by
\[
      S(z)=\exp\left\{-\int_0^{2\pi} \frac{\eit+z}{\eit-z}\,d\mu(\eit)\right\},
\]
for some positive Borel measure $\mu$ singular with respect to the Lebesgue
measure.

According to the Riesz-Smirnov factorization, any
function $f\in \N$ is represented as
\begin{equation}\label{factor}
      f=\alpha \frac{B S_1 \mathcal O_1}{S_2 \mathcal O_2},
\end{equation}
where $\mathcal O_1, \mathcal O_2$ are outer with $\|\mathcal 
O_1\|_{\infty},\|\mathcal O_2\|_{\infty}\le 1$,
$S_1, S_2$ are singular inner, $B$ is a Blaschke product and $|\alpha|=1$.

Similarly, any $f\in \H^\infty$ with zero set $Z$ can be factored as $f=B_Z 
f_1$, 
with $f_1\in \H^\infty$ and $\|f_1\|_\infty=\|f\|_\infty$. Actually $f_1$ takes 
the 
form $f_1=\alpha S\mathcal O$, 
where $\mathcal O$ is outer and bounded, $S$ is singular inner and 
$|\alpha|=1$. In this sense $\H^\infty$ and $\N$ are comparable.

\section{Nevanlinna Interpolation}\label{interpolation}

The first difficulty when trying to study interpolating sequences for $\nev$ is 
to find out what the precise problem should be.  
An interpolating sequence is one over which a 
certain natural set of sequences
of values may be realized as the restriction of functions in the class being 
studied.
That bounded functions should be required to interpolate all bounded
values seems natural.

\begin{definition}
\label{interp}
We say that $(z_k)_{k \in \mathbb N} \subset \D$ is an \emph{interpolating 
sequence} 
for $\H^\infty$
if for any bounded complex sequence $v:= (v_k)_k \in \ell^\infty$, there exists 
$f \in \H^\infty$
such that $f(z_k)=v_k$ for all $k \in \mathbb N$.
\end{definition}

By an application of the Open Mapping theorem, one can show that 
when $(z_k)_k$ is $\H^\infty$-interpolating, 
there exists $M>0$ such that $f$ can be 
chosen with $\|f\|_\infty \le M \| v \|_\infty$. The minimum such $M$ is called 
the \emph{interpolation constant} of $(z_k)_k$ .

The interpolating condition means that point evaluations at each $z_k$ are 
independent of each other in a strong sense, and so the sequence has to be 
sparse.
Actually, it is easy to deduce from
the Schwarz-Pick Lemma that any interpolating
sequence must be \emph{separated}, i.e. $\inf\limits_{j\neq k} \rho(z_j,z_k)>0$.

Actually, a stronger uniform separation is necessary and sufficient.

\begin{theorem}[Carleson, 1958, see e.g. \cite{Ga}]
\label{carlint}
The sequence $(z_k)_k$ is interpolating for $\H^\infty$ if and only if 
\[
\inf_{k\in\mathbb N} \left| B_{k} (z_k)\right| = \inf_{k\in\mathbb N} \prod_{j, 
j\neq k} \rho(z_j,z_k) >0.
\]
\end{theorem}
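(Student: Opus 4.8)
The plan is to prove the two implications separately, the reverse one being Carleson's interpolation construction. For \textbf{necessity}, I would exploit the interpolation constant $M$ of the sequence: fixing $k$ and interpolating the data that equals $1$ in the $k$-th slot and $0$ elsewhere produces $f_k\in\H^\infty$ with $\|f_k\|_\infty\le M$, $f_k(z_k)=1$ and $f_k(z_j)=0$ for all $j\neq k$. Since $f_k$ then vanishes on the zero set of $B_k$, we may write $f_k=B_k g_k$ with $g_k\in\H^\infty$ and $\|g_k\|_\infty\le\|f_k\|_\infty\le M$ (dividing by a Blaschke product does not increase the sup norm; cf.\ the canonical factorization in Section~\ref{factorization}); evaluating at $z_k$ gives $1=B_k(z_k)g_k(z_k)$, hence $|B_k(z_k)|\ge 1/M$ for every $k$, i.e.\ $\inf_k|B_k(z_k)|\ge 1/M>0$.

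For \textbf{sufficiency}, set $\delta:=\inf_k|B_k(z_k)|>0$. I would first record two elementary consequences: keeping a single factor of the product gives $\rho(z_j,z_k)\ge\delta$ for $j\neq k$, so the sequence is uniformly separated; and since $-\log t\ge\tfrac12(1-t^2)$ on $(0,1]$, one gets $\sum_{j\neq k}\bigl(1-\rho(z_j,z_k)^2\bigr)\le 2\log(1/\delta)$ for every $k$. Combining the separation with this bound, a counting argument over Carleson boxes (see \cite[Ch.~VII]{Ga}) shows that $\mu:=\sum_k(1-|z_k|^2)\delta_{z_k}$ is a Carleson measure, i.e.\ $\mu(S(I))\lesssim|I|$ for every arc $I\subset\partial\D$, where $S(I)$ is the Carleson box over $I$. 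Granting this, I would build a bounded linear extension operator by the $\bar\partial$-technique.

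Concretely: fix $r\in(0,\delta)$ and, using the separation, choose $\mathcal C^\infty$ cut-offs $\chi_k$ with $\chi_k\equiv 1$ on $D(z_k,r/2)$, $\operatorname{supp}\chi_k\subset D(z_k,r)$, $|\bar\partial\chi_k|\lesssim(1-|z_k|)^{-1}$, the supports being pairwise disjoint. Given $v\in\ell^\infty$, the function $\Phi=\sum_k v_k\chi_k$ is smooth, $\|\Phi\|_\infty\le\|v\|_\infty$, and $\Phi\equiv v_k$ near $z_k$, but it is not holomorphic. On $\operatorname{supp}\bar\partial\chi_k$ one has $\rho(z,z_k)\ge r/2$, while Schwarz--Pick applied to $B_k$ gives $|B_k(z)|\ge(\delta-r)/(1-r\delta)>0$, so $B:=B_Z$ is bounded below there; hence $\psi:=\bar\partial\Phi/B=\sum_k v_k\,\bar\partial\chi_k/B$ is a bounded smooth function supported in $\bigcup_k D(z_k,r)$. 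Using $1-|z|\approx 1-|z_k|$ on $D(z_k,r)$, the disjointness of the supports and $|D(z_k,r)|\approx(1-|z_k|)^2$, I would check that $|\psi(z)|^2(1-|z|^2)\,dA(z)$ is a Carleson measure of norm $\lesssim\|v\|_\infty^2$. The Carleson $\bar\partial$-estimate (the analytic lemma underlying Wolff's proof of the corona theorem; see \cite[Ch.~VIII]{Ga}) then yields $G$ on $\D$ with $\bar\partial G=\psi$ and $\|G\|_\infty\lesssim\|v\|_\infty$. Finally $f:=\Phi-BG$ satisfies $\bar\partial f=\bar\partial\Phi-B\psi=0$, so $f$ is holomorphic; $|f|\le\|\Phi\|_\infty+\|G\|_\infty\lesssim\|v\|_\infty$, so $f\in\H^\infty$; and near each $z_k$ the function $BG$ is holomorphic and vanishes at $z_k$ (since $B(z_k)=0$ and $G$ is bounded), so $f(z_k)=\Phi(z_k)=v_k$. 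This gives the interpolation, with control $\|f\|_\infty\lesssim\|v\|_\infty$.

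The two places where genuine work is needed are (a) the implication from the Carleson condition to the Carleson-measure property of $\mu$, a somewhat subtle combinatorial/geometric estimate over boxes that really uses both the separation and the bound $\sum_{j\neq k}(1-\rho(z_j,z_k)^2)\le 2\log(1/\delta)$; and (b) the $\bar\partial$-solvability with $L^\infty$ control of the solution when the datum generates a Carleson measure, which is the analytic heart of the corona circle of ideas and which I would quote rather than reprove. Everything in between is bookkeeping with the pseudohyperbolic metric. One could also follow Earl's route, producing the interpolant explicitly as a scalar multiple of a Blaschke product with zeros near the $z_k$ by a fixed-point argument, thus avoiding $\bar\partial$ altogether at the cost of a more delicate normalization; I would keep the $\bar\partial$ version as the default.
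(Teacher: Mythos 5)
The paper does not prove Theorem~\ref{carlint}; it is quoted as Carleson's 1958 theorem with a pointer to \cite{Ga}, so there is no internal proof to compare against. Your sketch is a correct and faithful rendering of the standard proof: the necessity half is the usual factoring argument (interpolate the Kronecker data, divide out $B_k$ without increasing the sup norm, evaluate at $z_k$), and the sufficiency half is the $\bar\partial$-scheme (smooth extension via disjoint cut-offs, division of the defect $\bar\partial\Phi$ by $B$ on the annuli where $B$ is bounded below, Carleson-measure control of $|\psi|^2(1-|z|^2)\,dA$ from the separation plus the summability $\sum_{j\neq k}(1-\rho(z_j,z_k)^2)\lesssim\log(1/\delta)$, and the Carleson $\bar\partial$-estimate), exactly as in Garnett. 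The only cosmetic quibble is that to guarantee pairwise disjoint supports you should take $r$ a bit smaller than $\delta$ — e.g.\ so that $2r/(1+r^2)<\delta$, hence $r<\delta/2$ certainly suffices — rather than merely $r<\delta$; this affects nothing downstream. You correctly identify the two genuinely nontrivial lemmas you would quote (the combinatorial passage from the Carleson condition to the Carleson-measure property of $\sum_k(1-|z_k|^2)\delta_{z_k}$, and the $L^\infty$-controlled solution of $\bar\partial G=\psi$), and you are right that Earl's constructive Blaschke-product argument is a valid alternative that avoids $\bar\partial$.
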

The $\hid$-interpolating sequences, also known as Carleson sequences, have 
several equivalent
characterizations, as we shall see soon.

Turning to the Nevanlinna class, we see that from the usual definition given 
at the beginning of the paper (in terms of 
limits of integrals over circles),
it is not immediately obvious what the natural condition on pointwise values of 
$f$ should be. 

One way to circumvent this is to forego an explicit description of our target 
set of values, but to demand
that it satisfies some property that guarantees that, in a sense, values can be 
chosen independently over
the points of the sequence. Following N. K. Nikolski and his collaborators, we 
adopt the following definition.

\begin{definition}
\label{freeint} 
Let $X$ be a space of holomorphic functions
in $\D$. A sequence $Z\subset \D$ is called \emph{free interpolating} for
$X$ if the space of restrictions $X|Z$ of functions of $X$ to $Z$,
also called \emph{trace space}, $X\vert Z$, is ideal. This means that 
if $(v_k)_k \in  X| Z$ and $(c_k)_k \in 
\ell^\infty$, then necessarily $(c_k v_k)_k \in X|Z$.
\end{definition}

In other words, $Z$ being free interpolating means that whenever a sequence of  
values is a restriction
of an element in $X$, any other sequence with pointwise values lesser or equal 
in moduli will also be a restriction.
We also see that, when $X$ is stable under multiplication by bounded functions,
 this immediately implies that $Z$ is a zero set for $X$.
 
Observe also that for $X$ a unitary algebra, $X\vert Z$ is ideal if and only if 
$\ell^\infty \subset X\vert Z$.
Indeed, if $X\vert Z$ is ideal, then since the constant function $1$ belongs to 
$X$, any 
bounded sequence must
be the restriction of a function in $X$. Conversely, suppose that $|w_k| \le 
|v_k|$ for each $k$
and that $v_k=f(z_k)$ for some $f\in X$. 
Let $h\in X$ satisfy $h(z_k) = w_k/v_k$ when $v_k\neq 0$; then $(hf)(z_k)=w_k$ 
and $hf\in X$. 

As a consequence, we remark that any $\hid$-interpolating sequence must be free 
interpolating for $\nev$, since any bounded sequence of values will be the 
restriction of a 
bounded function, which is in the Nevanlinna class.

An alternative approach, when trying to define Nevanlinna interpolation, 
is to think in terms of harmonic 
majorants and consider,
for a sequence $Z=(z_k)_k$, the subspace of values
\[
\ell_{\N}(Z):=\left\{ (v_{k})_k: \ \exists\
h\in\Har_+(\D)\
\mbox{ such that }h(z_k)\ge \log_+|v_{k}|, k\in\mathbb N \right\}.
\]
Notice that it is immediate from the first property in Definition~\ref{defnev} 
that 
$\nev|Z \subset \ell_{\N}(Z)$.  We then want to claim that a sequence is 
interpolating
for $\nev$ when the reverse inclusion holds.  Since $\ell_{\N}(Z)$ is clearly 
ideal, this
implies in particular that the sequence is free interpolating. 

Conversely, assume $Z$ is free interpolating.  
Given values $(v_k)_k$ and $h$ as above, we can construct a holomorphic
function $H$ such that $\Re H=h$; then $e^H \in \nev$.  The values 
$(e^{-H(z_k)}v_k)_k$
form a bounded sequence, so by the assumption there is $g\in \nev$ such that 
$g(z_k)= e^{-H(z_k)}v_k$,
and $e^H g \in \nev$ will interpolate the given values. So  the two conditions 
above turn out to be equivalent.

\subsection{Main result and consequences.} 

In this section we report mostly on results from \cite{HMNT}.

Nevanlinna interpolating sequences are characterized by the following
Carleson type condition. 

\begin{theorem}{\cite[Theorem 1.2]{HMNT}}
\label{nevinter}
Let $Z=(z_k)_k$ be a sequence in $\D$. The following statements are
equivalent:
\begin{itemize}
\item[(a)] $Z$ is a free interpolating sequence for the Nevanlinna
class $\nev$, i.e. the trace space $\nev|Z$ is ideal;

\item[(b)] The trace space $\nev|Z$ contains $\ell_{\N}(Z)$, and therefore is 
equal to it;

\item[(c)] There exists $h\in\Har_+(\D)$ such that 
\[
|B_{k}(z_k)|\geq e^{-h(z_k)}\quad k\in\mathbb N.
\]
\end{itemize}
\end{theorem}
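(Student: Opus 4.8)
The plan is to establish the cycle of implications (c) $\Rightarrow$ (b) $\Rightarrow$ (a) $\Rightarrow$ (c). The implication (b) $\Rightarrow$ (a) is essentially free: as already observed in the discussion preceding the theorem, $\ell_{\N}(Z)$ is an ideal sequence space, so if $\N|Z$ contains it (and is contained in it by the trivial inclusion $\N|Z \subset \ell_{\N}(Z)$ noted above), then $\N|Z = \ell_{\N}(Z)$ is ideal. The implication (a) $\Rightarrow$ (c) I would prove by contraposition, or rather by a direct extremal/duality-flavored argument: if $Z$ is free interpolating, then since the sequence $(B_k(z_k)^{-1} \cdot B_k(z_k))_k = (1)_k$ is trivially in the trace space, and since freeness means we can multiply by any $\ell^\infty$ sequence, one leverages the canonical factorization to produce, for the specific target values $v_k = 1/B_k(z_k)$ (or a comparable choice), an interpolating function $g \in \N$; then $|g(z_k)||B_k(z_k)| \approx 1$ while $gB_Z \in \N$ vanishes appropriately, and applying Definition~\ref{defnev} to an auxiliary function built from $g$ yields the positive harmonic majorant $h$ with $|B_k(z_k)| \geq e^{-h(z_k)}$. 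The subtlety is getting from ``each bounded sequence is interpolated'' to ``interpolated with control'': unlike the $\H^\infty$ case, there is no Open Mapping theorem available (as the paper stresses in Section~\ref{topology}), so the uniform-type bound encoded in the single majorant $h$ must be extracted by hand rather than from functional analysis.

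The heart of the matter is (c) $\Rightarrow$ (b): given $h \in \Har_+(\D)$ with $|B_k(z_k)| \geq e^{-h(z_k)}$, and given arbitrary values $(v_k)_k \in \ell_{\N}(Z)$ with associated majorant $H_0 \in \Har_+(\D)$ satisfying $H_0(z_k) \geq \log_+|v_k|$, I must construct $f \in \N$ with $f(z_k) = v_k$. The natural approach mimics the classical Earl/Carleson explicit interpolation or the $\bar\partial$-approach, but adapted to harmonic-majorant control. I would take the $\bar\partial$ route: write a smooth interpolant $\Phi = \sum_k v_k \chi_k$, where $\chi_k$ is a cutoff supported near $z_k$ in a pseudohyperbolic disc $D(z_k, r)$ (the separation of $Z$, which follows from (c) via Theorem~\ref{carlint}-type reasoning since $|B_k(z_k)| \geq e^{-h(z_k)} \geq e^{-C/(1-|z_k|)}\cdot(\text{something})$—actually separation needs a small argument from (c) alone), then solve $\bar\partial u = \bar\partial \Phi$ with the correction $f = \Phi - u \cdot (\text{Blaschke-type factor})$ so that $f$ remains holomorphic and still interpolates. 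The weight for the $\bar\partial$-problem should be $e^{-\psi}$ where $\psi$ is harmonic and dominates both $h$ (to absorb the size of $1/B_k(z_k)$ appearing when one divides by the local Blaschke factors) and $H_0$ (to absorb the size of the $v_k$); concretely $\psi = C(h + H_0) + (\text{harmonic majorant of } \log(1/|B_Z|^2) \text{ type terms})$. One then checks $f \in \N$ because $\log_+|f|$ is controlled by $\psi$ plus bounded terms.

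The main obstacle I anticipate is the $\bar\partial$-estimate with a \emph{harmonic} (hence merely positive, possibly unbounded) weight rather than a bounded one: the classical Hörmander $L^2$ estimate gives a solution controlled in an $L^2(e^{-\psi})$ sense, but to conclude $f \in \N$ one needs the resulting holomorphic correction to have $\log_+$ bounded by a \emph{positive harmonic} function, and passing from an $L^2$-with-weight bound to a pointwise-$\log$ bound by a harmonic majorant is exactly the kind of step where the $\H^\infty$ proof does not transfer verbatim. I expect this is handled by choosing $\psi$ to be the real part of a holomorphic function $G$, writing $f = e^G \tilde f$ where $\tilde f$ solves a $\bar\partial$-problem with a genuinely bounded (Carleson-measure) weight, thereby reducing to the classical bounded theory, and then $\log_+|f| \leq \Re G + \log_+|\tilde f| \lesssim \psi + \text{const}$, which is a positive harmonic majorant up to an additive constant—and adding a constant to a positive harmonic function keeps it positive harmonic. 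Verifying that the Carleson-measure condition for the reduced problem genuinely follows from hypothesis (c) (this is where $|B_k(z_k)| \geq e^{-h(z_k)}$ gets used, converting the lower bound on the Blaschke product into an upper bound on the density of $Z$ weighted against $e^{-\psi}$) is the computational crux, and I would expect the authors to cite or adapt the corresponding lemma from \cite{HMNT} at precisely this point.
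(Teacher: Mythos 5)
Your cycle (c) $\Rightarrow$ (b) $\Rightarrow$ (a) $\Rightarrow$ (c) is the right skeleton, and (b) $\Rightarrow$ (a) is indeed free given the paper's preliminary discussion. But there is a genuine gap in your (a) $\Rightarrow$ (c).

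\textbf{The (a) $\Rightarrow$ (c) argument is circular.} You propose to interpolate the target values $v_k = 1/B_k(z_k)$ by some $g \in \N$ and then read off $h$ from a harmonic majorant of $\log_+|g|$. But freeness only gives $\ell^\infty \subset \N|Z$; it does not let you interpolate an \emph{unbounded} sequence. Knowing that $\bigl(1/B_k(z_k)\bigr)_k \in \N|Z$ requires knowing that this sequence lies in $\ell_\N(Z)$, i.e.\ that $\log\bigl(1/|B_k(z_k)|\bigr)$ has a positive harmonic majorant --- which is precisely condition (c). The phrase ``freeness means we can multiply by any $\ell^\infty$ sequence'' is correct but only gets you from $(1)_k$ to other bounded sequences, never to the unbounded $\bigl(1/B_k(z_k)\bigr)_k$. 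The natural repair --- interpolate the peak sequence $\delta_{jk}$ to get $f_k \in \N$ vanishing on $Z\setminus\{z_k\}$, factor $f_k = B_k g_k$ so $g_k(z_k) = 1/B_k(z_k)$, and read off a majorant --- gives only a $k$-dependent family $h_k$ of majorants (one per peak function), not a single $h$. A Baire category argument in $(\N,d)$ can be used to upgrade to a uniform bound $N(f_k) \le C$, hence $h_k(0) \le C$, but the paper itself stresses (in the subsection on peak functions, citing \cite{MM}) that the existence of peak functions with uniform $N$-bound is \emph{strictly weaker} than being Nevanlinna-interpolating. So any route to (c) that passes through uniform peak functions alone must fail; the proof has to extract more from (a) than that.

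\textbf{On (c) $\Rightarrow$ (b):} the conjugation $f = e^G \tilde f$ with $\Re G$ dominating $h + H_0$ is the right normalizing device, and you correctly flag that the Carleson-type estimate after reweighting is the crux. Two cautions. First, Hörmander's $\bar\partial$-estimate requires strict subharmonicity of the weight exponent, and your proposed $\psi$ is harmonic with $\Delta\psi \equiv 0$; you would need to add a subharmonic correction (e.g.\ a term built from $-\log|B_Z|$) and then control it, which is itself nontrivial. Second, the reduced problem is \emph{not} reducible to Carleson's theorem for the original sequence $Z$: after dividing out $e^{G(z_k)}$ the target values $\tilde v_k$ are bounded, but $Z$ is still only weakly separated in the sense of Definition~\ref{weaksep}, not uniformly separated, so the classical $\H^\infty$ interpolation machinery does not apply directly. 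What does work is an explicit P.~Beurling/Earl-type interpolating series with the $e^{G(z)-G(z_k)}$ factor built in, in which the condition $|B_k(z_k)| \ge e^{-h(z_k)}$ is used to control the weighted sum of Poisson kernels; this is where the harmonic-majorant hypothesis actually earns its keep, and it is the step you would need to carry out in detail.
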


Notice that (c) could be rephrased as
\begin{itemize}
 \item [(c)] The function $\varphi_Z$ defined by 
 $\varphi_Z( z_k)= \log  | B_{k} (z_k)|^{-1}$,
$\varphi_Z(z)=0$ when $z\in \D\setminus (z_k)_k$, admits a harmonic majorant.
\end{itemize}
In this form we see that this is the adaptation, according to our general 
principle,
of Carleson's condition for $\H^\infty$: the uniform upper bound on $\log | 
B_{k} 
(z_k)|^{-1}$ is replaced by a harmonic majorant.

A first consequence of the theorem is that Nevanlinna-interpolating sequences 
must 
satisfy a weak separation condition.
\begin{definition}\label{weaksep}
A sequence $Z=(z_k)_k$ is \emph{weakly separated} if there exists 
$H\in\Har_+(\D)$
such that the disks $D(z_k, e^{-H(z_k)})$, $k\in \mathbb N$, are pairwise 
disjoint.
\end{definition}
Since for any $j \neq k$, $\rho(z_j,z_k) \ge \left| B_{k} (z_k)\right|$,
we have $\rho(z_j,z_k) \ge e^{-h(z_k)}$, with $h$ the harmonic majorant in 
condition (c).
So any Nevanlinna-interpolating sequence is weakly separated.
Another proof of this is given in \cite[Corollary 2.5]{HMN2}.

Although it is not easy to deduce simple geometric properties from condition 
(c), there is a vast and interesting class of sequences which turns out to be 
Nevanlinna-interpolating.

In order to see a first family of examples we point out that condition (c) 
is really about the ``local'' Blaschke product; it depends only on the 
behavior of the $z_j$, $j\neq k$,  in a fixed pseudohyperbolic neighbourhood of 
$z_k$.

\begin{proposition}{\cite[Proposition 4.1]{HMNT}}
\label{propsep}
Let $Z$ be a Blaschke sequence.  For any $\delta \in (0,1)$, there
exists a positive harmonic function $h$,
such that
\[
    \log \prod_{{k:\rho(z_k,z)\ge \delta}} |\rho(z_k,z)|^{-1}
    \leq h(z) ,\quad
    z \in \D .
\]
\end{proposition}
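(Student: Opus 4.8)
The plan is to estimate the sum $S(z):=\sum_{k:\rho(z_k,z)\ge\delta}\log\frac{1}{\rho(z_k,z)}$ by a positive harmonic function, using the Blaschke condition $\sum_k(1-|z_k|)<\infty$ as the only input about the sequence. The key observation is that for a point $z_k$ with $\rho(z_k,z)\ge\delta$ we have an elementary inequality of the form $\log\frac{1}{\rho(z_k,z)}\le \frac{C_\delta}{1-\rho(z_k,z)^2}\,(1-\rho(z_k,z)^2)\cdot\frac{1}{\text{something}}$; more precisely, since $t\mapsto \log\frac1t$ is bounded by a constant multiple of $1-t^2$ on any interval $[\delta,1]$ away from $0$, but that is the wrong direction near $t=1$. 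Instead I would use that $\log\frac1{\rho}=-\frac12\log\rho^2\le \frac12(1-\rho^2)/\rho^2\le \frac{1}{2\delta^2}(1-\rho^2)$ when $\rho\ge\delta$. Hence
\[
S(z)\le \frac{1}{2\delta^2}\sum_{k:\rho(z_k,z)\ge\delta}\bigl(1-\rho(z_k,z)^2\bigr)\le \frac{1}{2\delta^2}\sum_{k\in\mathbb N}\bigl(1-\rho(z_k,z)^2\bigr).
\]
So it suffices to majorize $\sum_k(1-\rho(z_k,z)^2)$ by a positive harmonic function of $z$.

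Now the identity $1-\rho(z_k,z)^2=\dfrac{(1-|z_k|^2)(1-|z|^2)}{|1-\bar z_k z|^2}$ shows that, up to the constant factor $2\pi$ and the innocuous difference between $1-|z_k|$ and $1-|z_k|^2$,
\[
1-\rho(z_k,z)^2 = (1-|z_k|^2)\,(1-|z|^2)\,\frac{1}{|1-\bar z_k z|^2} \approx (1-|z_k|)\, P_z(e^{i\theta_k})
\]
where $z_k=|z_k|e^{i\theta_k}$ and $P_z$ is the Poisson kernel as defined in the paper. Therefore
\[
\sum_k\bigl(1-\rho(z_k,z)^2\bigr)\lesssim \sum_k (1-|z_k|)\,P_z(e^{i\theta_k}) = \mathcal P[\mu](z),\qquad \mu:=\sum_k(1-|z_k|)\,\delta_{\theta_k},
\]
and $\mu$ is a finite positive measure on $\partial\mathbb D$ precisely because $Z$ satisfies the Blaschke condition. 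By the facts recalled after Definition~\ref{defnev}, $\mathcal P[\mu]\in\Har_+(\mathbb D)$. Taking $h:=\frac{C_\delta}{2\delta^2}\mathcal P[\mu]$ with an absolute constant $C_\delta$ absorbing the comparisons $1-|z_k|\approx 1-|z_k|^2$ and the factor $1-|z|^2\le 1$, we obtain $S(z)\le h(z)$ for all $z\in\mathbb D$, as required. (One may also simply invoke the sufficient condition mentioned in the Introduction: a nonnegative function on the disc supported on a sequence with $\sum_k(1-|z_k|)v_k<\infty$ has a harmonic majorant — but here I prefer the explicit construction since the ``mass'' is spread continuously in $z$.)

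The only point requiring a little care — and the main obstacle — is the near-diagonal behavior: when $z$ is pseudohyperbolically close to some $z_k$ with $\rho(z_k,z)$ just above $\delta$, the term $\log\frac1{\rho(z_k,z)}$ is comparable to $\log\frac1\delta$, which is fine, but one must make sure the bound $\log\frac1\rho\le\frac1{2\delta^2}(1-\rho^2)$ really does hold on all of $[\delta,1)$; this is the elementary inequality $-\log t\le \frac{1-t^2}{2t^2}$ for $t\in(0,1)$, which follows from $\log s\ge 1-\frac1s$ with $s=t^2$. After that, everything is a direct consequence of the Blaschke condition and the mapping properties of the Poisson integral, and the constant $h(0)$ we produce is $\lesssim \delta^{-2}\sum_k(1-|z_k|)$.
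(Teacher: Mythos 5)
Your proof is correct, and the elementary inequality $-\log t\le\frac{1-t^2}{2t^2}$ together with the identity $1-\rho(z_k,z)^2=\frac{(1-|z_k|^2)(1-|z|^2)}{|1-\bar z_k z|^2}$ (and the observation $|1-e^{-i\theta_k}z|\le 2|1-\bar z_k z|$, which follows from $|1-rw|\ge 1-r$ and $|1-w|\le|1-rw|+(1-r)$) cleanly reduces everything to the Blaschke condition. However, your route is genuinely different from the paper's. The paper majorizes each local term by $C\,\mathcal P[\chi_{I_{z_k}}](z)$, where $I_{z_k}$ is the Privalov shadow of $z_k$, and then sums, producing a majorant of the form $\mathcal P[w]$ with $w=C\sum_k\chi_{I_k}\in L^1(\partial\D)$; this harmonic function is quasi-bounded, which is exactly what one needs when transposing the corollary to the Smirnov class (as noted in Section~\ref{smirnov}). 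Your construction instead replaces the arc $I_k$ by the single atom $\delta_{\theta_k}$ weighted by $1-|z_k|$, which has comparable total mass but gives a purely singular positive harmonic majorant. Both are perfectly adequate for the statement as given (only $\Har_+(\D)$ is required), and your version is arguably more elementary since it avoids the geometric lemma about Privalov shadows; the paper's version carries the extra information that the majorant can be taken quasi-bounded, which you lose. One small remark: the proposal's parenthetical suggestion to ``simply invoke the sufficient condition $\sum_k(1-|z_k|)v_k<\infty$'' does not apply directly here, since your quantity $S(z)$ is not a function supported on the sequence $Z$ but is defined for every $z\in\D$; the explicit Poisson-integral construction you then give is the right thing, and is what actually closes the argument.
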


The idea of the proof is that for any $a\in \D$ 
and $\delta \in (0,1)$,
there is a constant $C$ such that for $\rho(a,z) \ge \delta$,
\[
\log \frac1{\rho(a,z)} \le C  \mathcal P [  \chi_{I_{a}}] (z),
\]
where 
\[
I_a=\Bigl\{\eit\in\partial\D : \bigl|\eit-\frac a{|a|}\bigr|< 1-|a|\Bigr\}
\]
is the Privalov shadow of $a$ 
(see e.g. \cite[p. 124, lines 3 to 17]{NPT}). One gets then
the result by summing over the sequence $Z$, which gives the Poisson 
integral of an integrable function on the
circle, because of the Blaschke condition.

An immediate consequence is the following.

\begin{corollary}
Any separated Blaschke  sequence is Nevanlinna-interpolating.
\end{corollary}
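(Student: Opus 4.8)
The plan is to deduce the corollary directly from Proposition~\ref{propsep} together with the characterization of Nevanlinna-interpolating sequences in Theorem~\ref{nevinter}, specifically the equivalence $(a)\Leftrightarrow(c)$. So the corollary is essentially a one-line consequence of the preceding material, and the work is just in connecting the pieces correctly.

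First I would fix a separated Blaschke sequence $Z=(z_k)_k$, so that $\delta:=\inf_{j\neq k}\rho(z_j,z_k)>0$. The goal is to produce an $h\in\Har_+(\D)$ with $|B_k(z_k)|\geq e^{-h(z_k)}$ for all $k$, which by Theorem~\ref{nevinter}(c) gives that $Z$ is Nevanlinna-interpolating. Now observe that by separation, for every $j\neq k$ we have $\rho(z_j,z_k)\geq\delta$; hence
\[
\log\frac{1}{|B_k(z_k)|}=\sum_{j\neq k}\log\frac{1}{\rho(z_j,z_k)}=\sum_{j:\,\rho(z_j,z_k)\geq\delta}\log\frac{1}{\rho(z_j,z_k)}.
\]
Applying Proposition~\ref{propsep} with this $\delta$ and evaluating at $z=z_k$, the right-hand side is bounded above by $h(z_k)$, where $h$ is the positive harmonic function the proposition provides. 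Thus $\log|B_k(z_k)|^{-1}\leq h(z_k)$, i.e. $|B_k(z_k)|\geq e^{-h(z_k)}$, which is exactly condition (c).

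There is essentially no obstacle here: the only point requiring a word of care is that the sum defining $\log|B_k(z_k)|^{-1}$ really does range over exactly the indices $j$ with $\rho(z_j,z_k)\geq\delta$ (all of them, by separation, and none omitted), so that Proposition~\ref{propsep}, whose left-hand side is the sum over $\{k:\rho(z_k,z)\geq\delta\}$, captures the whole product $B_k(z_k)$ and not merely a tail of it. Once that is noted, invoking Theorem~\ref{nevinter} finishes the argument. In fact the same reasoning shows a little more — it works for any weakly separated Blaschke sequence in which the separation is by a genuine constant $\delta$ — but for the stated corollary the above suffices.
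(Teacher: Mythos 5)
Your proof is correct and is exactly the argument the paper has in mind when it calls the corollary ``an immediate consequence'' of Proposition~\ref{propsep}: separation makes the product over $\{j:\rho(z_j,z_k)\ge\delta\}$ equal to all of $|B_k(z_k)|^{-1}$, and Theorem~\ref{nevinter}(c) then applies. (Your closing remark about ``weakly separated with a genuine constant $\delta$'' is just a restatement of ordinary separation, so it adds nothing, but the main argument is right.)
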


There are other cases, when the geometry of $Z$ is especially regular, where
it is possible to characterize Nevanlinna interpolating sequences.
This is the case for well concentrated sequences, in the sense that they are 
contained in a finite union of Stolz angles $\cup_i \Gamma(\theta_i)$, 
where 
\[
\Gamma(\theta)=\bigl\{z\in\D : |z-\eit|\leq  (1-|z|)\bigr\}.
\]
It is also the case for for well spread sequences, in the sense that the measure
$\mu_Z=\sum_k (1-|z_k|)\delta_{z_k}$ has bounded Poisson balayage, i.e., 
\[
 \sup_{\theta\in[0,2\pi)} \sum_k (1-|z_k|) P_{z_k}(\eit) \approx
 \sup_{\theta\in[0,2\pi)} \sum_k \frac{(1-|z_k|)^2}{|z_k-\eit|^2}<+\infty.
\]

\begin{corollary}\label{interpolation-geometric}
Assume $Z=(z_k)_k$ is a Blaschke sequence in $\D$. 
\begin{itemize}
 \item [(a)] Let $Z$ be contained in a finite number of Stolz angles. Then $Z$ 
is Nevanlinna 
 interpolating if and only if 
 \begin{equation}\label{blabdd}
  \sup_k (1-|z_k|) \log | B_{k} (z_k)|^{-1}<+\infty.
 \end{equation}
 
 \item[(b)] Let $Z$ be such that $\mu_Z:=\sum_k (1-|z_k|)\delta_{z_k}$ has 
bounded Poisson balayage.
 Then $Z$ is Nevanlinna 
 interpolating if and only if 
 \begin{equation}\label{bal}
  \sum_k (1-|z_k|) \log | B_{k} (z_k)|^{-1}<+\infty.
 \end{equation}
\end{itemize}
\end{corollary}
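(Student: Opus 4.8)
The plan is to derive both parts from the characterization in Theorem~\ref{nevinter}(c), namely that $Z$ is Nevanlinna interpolating if and only if the function $\varphi_Z$ (with $\varphi_Z(z_k)=\log|B_k(z_k)|^{-1}$, $\varphi_Z=0$ off $Z$) admits a positive harmonic majorant. The key quantitative fact feeding both directions is the one behind Proposition~\ref{propsep}: for each $a\in\D$ and fixed $\delta\in(0,1)$, one has $\log\rho(a,z)^{-1}\le C\,\mathcal{P}[\chi_{I_a}](z)$ for $\rho(a,z)\ge\delta$, where $I_a$ is the Privalov shadow of $a$ and $|I_a|\approx 1-|a|$. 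Combining this with the crude inequality $\log|B_k(z_k)|^{-1}=\sum_{j\neq k}\log\rho(z_j,z_k)^{-1}$ lets us control the ``far'' part of each product by a Poisson integral, so that the only obstruction to $\varphi_Z$ having a harmonic majorant comes from the ``near'' part — points $z_j$ with $\rho(z_j,z_k)<\delta$ — plus a global Blaschke-type summability condition. The extra geometric hypotheses in (a) and (b) are exactly what is needed to turn that residual obstruction into the simple pointwise or summability conditions \eqref{blabdd} and \eqref{bal}.

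For part (a), suppose $Z$ lies in finitely many Stolz angles. The necessity of \eqref{blabdd} is immediate, since Harnack's inequality \eqref{harnack} forces any $h\in\Har_+(\D)$ to satisfy $(1-|z_k|)h(z_k)\le 2h(0)$, hence $(1-|z_k|)\varphi_Z(z_k)$ is bounded whenever $\varphi_Z$ has a harmonic majorant. For sufficiency, I would split $\varphi_Z(z_k)=\varphi_Z^{\mathrm{near}}(z_k)+\varphi_Z^{\mathrm{far}}(z_k)$ at a threshold $\delta$. The far part is dominated by a fixed positive harmonic function by Proposition~\ref{propsep}. For the near part, the Stolz-angle hypothesis is crucial: inside a cone, pseudohyperbolic closeness $\rho(z_j,z_k)<\delta$ forces the indices $j$ with $z_j$ near $z_k$ to be comparable to $z_k$ in modulus and in argument, so within each cone the near-contributions ``stack up'' only finitely deep at comparable heights; one then builds a harmonic majorant for $\varphi_Z^{\mathrm{near}}$ as a sum of Poisson kernels $\sum_k (1-|z_k|)\varphi_Z(z_k) P_{z_k}$-type pieces and checks, using $\sup_k(1-|z_k|)\varphi_Z(z_k)<\infty$ together with the bounded overlap and the Blaschke condition, that this sum is the Poisson integral of an $L^1$ function on the relevant boundary arcs. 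The technical heart here is the bounded-overlap/comparable-height claim for pseudohyperbolically close points inside a Stolz angle.

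For part (b), when $\mu_Z=\sum_k(1-|z_k|)\delta_{z_k}$ has bounded Poisson balayage, the same splitting applies, but now the ``well spread'' hypothesis controls how points can cluster globally rather than cone by cone. Necessity of \eqref{bal}: if $\varphi_Z\le h$ for some $h\in\Har_+(\D)$ with $h=\mathcal{P}[d\nu]$, then $\sum_k(1-|z_k|)\varphi_Z(z_k)\le\sum_k(1-|z_k|)h(z_k)$, and one shows the right-hand side is finite precisely because bounded balayage of $\mu_Z$ is the dual statement to $\sum_k(1-|z_k|)\mathcal{P}[d\nu](z_k)=\int\big(\sum_k(1-|z_k|)P_{z_k}\big)d\nu\lesssim \|\nu\|<\infty$. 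Conversely, given \eqref{bal}, I would take as candidate majorant the Poisson balayage of the weighted measure $\sum_k(1-|z_k|)\varphi_Z(z_k)\delta_{z_k}$ (which is a finite measure on $\D$, hence its balayage $\sum_k(1-|z_k|)\varphi_Z(z_k)P_{z_k}$ is in $\Har_+(\D)$ by \eqref{bal}), add the fixed majorant from Proposition~\ref{propsep} for the far part, and verify that this dominates $\varphi_Z$ on $Z$ using Harnack to compare $P_{z_k}(\eit)$ evaluated near $z_k$ with $(1-|z_k|)^{-1}$.

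The main obstacle in both parts is the \emph{near} part: showing that the locally finite sum of Poisson-type bumps one writes down for $\varphi_Z^{\mathrm{near}}$ actually converges to a \emph{bona fide} positive harmonic function and dominates $\varphi_Z$ — this is where the geometric hypothesis (confinement to Stolz angles, or bounded balayage) is genuinely used, because without it the near-contributions can accumulate in a way no positive harmonic function can absorb. In both cases the reduction of the far part via the Privalov-shadow estimate is routine; the delicate bookkeeping is estimating the overlap of the arcs $I_{z_j}$ for $z_j$ clustered around a common $z_k$ and converting $\sup_k$ (resp. $\sum_k$) control of $(1-|z_k|)\varphi_Z(z_k)$ into an $L^1$ (resp. finite-mass) bound for the generating measure.
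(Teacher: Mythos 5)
Your treatment of part (b) is essentially the paper's: necessity via Fubini against $\nu$ plus the bounded-balayage hypothesis, sufficiency via the quasi-bounded majorant whose boundary density is $\sum_k \varphi_Z(z_k)\chi_{I_k}$, which under \eqref{bal} is in $L^1(\partial\D)$. That part is fine.

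Part (a) has a genuine gap. You propose to majorize $\varphi_Z^{\mathrm{near}}$ by $\sum_k(1-|z_k|)\varphi_Z(z_k)P_{z_k}$ and to argue that this is the Poisson integral of an $L^1$ function using $\sup_k(1-|z_k|)\varphi_Z(z_k)<\infty$, ``bounded overlap,'' and the Blaschke condition. This cannot work. First, the $L^1$ mass of that sum is precisely $\sum_k(1-|z_k|)\varphi_Z(z_k)$, and the hypothesis \eqref{blabdd} only bounds the individual terms, not their sum: for the model sequence $z_k=1-2^{-k}$ with $\varphi_Z(z_k)\approx 2^k$, the sum diverges. Second, the Privalov shadows $I_k$ of points in a fixed Stolz angle $\Gamma(\theta)$ do \emph{not} have bounded overlap — they all pile up at the vertex $e^{i\theta}$ — so the local estimate you hope for fails exactly where it matters. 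The correct observation, and the entire content of the paper's proof of sufficiency in (a), is that a \emph{singular} positive harmonic majorant does the job: after reducing (by the fact that $|B_{Z_i}(z)|\to 1$ as $z\to e^{i\theta_i}$ non-tangentially for the pieces lying in the other cones) to a single Stolz angle with vertex $1$, one takes $h=C\,\mathcal{P}[\delta_1]$, i.e. $h(z)=C\,\dfrac{1-|z|^2}{|1-z|^2}$. Inside $\Gamma(1)$ one has $|1-z|\lesssim 1-|z|$, hence $h(z_k)\gtrsim C/(1-|z_k|)\ge \varphi_Z(z_k)$ by \eqref{blabdd}, and Theorem~\ref{nevinter}(c) applies. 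No near/far decomposition and no appeal to Proposition~\ref{propsep} is needed for this direction. In short, your strategy searches for a quasi-bounded majorant, which is the right object for the Smirnov class and for part (b), but the Stolz-angle case of part (a) is precisely the case where the majorant must be allowed to be singular, and your proposal never produces one.

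Your statement of necessity in (a), via Harnack, matches the paper.
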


Condition \eqref{blabdd} is always necessary for Nevanlinna interpolation. 
This is just a consequence of Theorem~\ref{nevinter} (c) and Harnack's 
inequalities (see \eqref{harnack}).

In order to see the converse we can assume that the sequence is contained in 
just one
Stolz angle: if $Z=\cup_{i=1}^n Z_i$, with $Z_i\subset \Gamma(\theta_i)$, 
$\theta_i\neq\theta_j$, then
\[
 \lim_{\begin{subarray} z z\to e^{i\theta_i} \\ z\in \Gamma(\theta_i) 
\end{subarray}}
 |B_{Z_i}(z)|= 1\ ,
\]
and therefore, for $z_k\in\Gamma(\theta_i)$, the value $\log|B_{k}(z_k)|^{-1}$ 
behaves asymptotically like 
$\log|B_{Z_i,k}(z_k)|^{-1}$.

In this situation the proof is immediate. Assume $Z$ is contained in the Stolz 
angle of vertex $1\in\partial\D$.
Let $C$ denote the supremum in \eqref{blabdd} and define the positive harmonic 
(singular) function 
\[
h(z):=P[C\delta_1](z)=C\frac{1-|z|^2}{|1-z|^2}, 
\]
where $\delta_1$ indicates the Dirac mass on $1$. From the hypothesis
\[
 \log|B_{k}(z_k)|^{-1}\leq\frac C{1-|z_k|}\leq h(z_k), \quad k\in \mathbb N,
\]
and the result follows from Theorem~\ref{nevinter}.

On the other hand, condition \eqref{bal} is always sufficient, since then 
\[
w(\theta)=\sum_k (\log | B_{k} (z_k)|^{-1}) \chi_{I_k}(\eit)
\]
is in $L^1(\partial\D)$ and clearly
\[
 \log | B_{k} (z_k)|^{-1}\leq P[w](z_k), \quad k\in\mathbb N.
\]
In order to see that in case the Poisson balayage is finite \eqref{bal}
it is also necessary, take $h\in\Har_+(\D)$
satisfying Theorem~\ref{nevinter}(c) and let $\nu$ be a finite positive measure 
with $h=P[\nu]$. 
Then, by Fubini
\begin{align*}
 \sum_k (1-|z_k|) h(z_k)&=\int_{\D} (1-|z|) h(z) d\mu_Z(z)\\
 &= 
 \int_0^{2\pi}\int_{\D} (1-|z|) P_z(\eit) d\mu_Z(z) d\nu(\theta)\approx 
\nu(\partial\D),
\end{align*}
and the result follows from the previous estimate.

\subsection{Comparison with previous results.}
There had been previous works on the question of interpolation in the 
Nevanlinna class. As early as 1956, Naftalevi\v c \cite{Na56} described the
sequences $\Lambda$ for which the trace $N\vert \Lambda$ coincides with the
sequence space
\[ 
\ell_{\text{Na}}(Z):=\bigl\{(v_k)_k: \sup_{k} (1-|z_k|)\log_+|v_k|<\infty 
\bigr\}.
\]

\begin{theorem}{\cite{Na56} }
$\nev \vert Z = \ell_{\text{Na}}(Z)$ if and
only if $Z$ is contained in a finite union of Stolz angles and \eqref{blabdd} 
holds.
\end{theorem}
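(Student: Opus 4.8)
The plan is to prove both implications by exploiting the geometric rigidity imposed by containment in finitely many Stolz angles, together with the characterization of Nevanlinna interpolation in Theorem~\ref{nevinter}(c) and the description $\nev|Z \subset \ell_{\N}(Z)$ with equality precisely for free interpolating sequences.

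First I would establish the easy direction: if $\nev|Z = \ell_{\text{Na}}(Z)$, then in particular $\ell_{\text{Na}}(Z) \subset \nev|Z$, so $Z$ is free interpolating (since $\ell_{\text{Na}}(Z)$ is an ideal sequence space containing $\ell^\infty$), hence Nevanlinna interpolating by Theorem~\ref{nevinter}. Condition \eqref{blabdd} is then automatic, as already noted in the excerpt (it follows from Theorem~\ref{nevinter}(c) and Harnack \eqref{harnack}). The remaining content of this direction is that $Z$ must be contained in a finite union of Stolz angles; here I would argue by contraposition, showing that if $Z$ is \emph{not} so contained, then $\ell_{\text{Na}}(Z)$ is strictly larger than $\ell_{\N}(Z)$, so the trace space (which is always inside $\ell_{\N}(Z)$) cannot equal $\ell_{\text{Na}}(Z)$. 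The key observation is that $\sup_k (1-|z_k|)\log_+|v_k| < \infty$ is exactly the Harnack-necessary condition for a positive harmonic majorant of $\varphi$ at the points $z_k$, but is sufficient for such a majorant to exist only when the points are ``aimed at'' finitely many boundary directions: a positive harmonic function comparable to $(1-|z|)^{-1}$ along a Stolz angle $\Gamma(\theta_i)$ is $P[C\delta_{\theta_i}]$, and a finite sum of these handles finitely many angles, but no single positive measure on the circle can be comparable to $(1-|z_k|)^{-1}$ along points approaching infinitely many distinct boundary points in a non-tangential-cluster-dense way. Making this precise — exhibiting values $(v_k)_k$ with $\sup_k(1-|z_k|)\log_+|v_k|<\infty$ but no positive harmonic majorant — is where I would use a diagonal/escape argument against any candidate measure $\nu$ with $P[\nu](z_k) \geq \log|v_k|$.

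For the converse, suppose $Z \subset \bigcup_{i=1}^n \Gamma(\theta_i)$ and \eqref{blabdd} holds. I must show $\ell_{\text{Na}}(Z) \subset \nev|Z$, equivalently (by Theorem~\ref{nevinter} and the equivalence of free interpolation with $\nev|Z = \ell_{\N}(Z)$) that $Z$ is Nevanlinna interpolating \emph{and} that $\ell_{\N}(Z) = \ell_{\text{Na}}(Z)$. That $Z$ is Nevanlinna interpolating under these hypotheses is precisely Corollary~\ref{interpolation-geometric}(a), which I may assume. So the heart of the matter is the identity $\ell_{\N}(Z) = \ell_{\text{Na}}(Z)$: the inclusion $\ell_{\N}(Z) \subset \ell_{\text{Na}}(Z)$ is immediate from Harnack, and for the reverse I would take $(v_k)_k$ with $M := \sup_k (1-|z_k|)\log_+|v_k| < \infty$ and produce the explicit majorant $h(z) := C M \sum_{i=1}^n P_z(e^{i\theta_i}) = C M \sum_{i=1}^n \frac{1-|z|^2}{|1-ze^{-i\theta_i}|^2}$, exactly as in the proof sketch of Corollary~\ref{interpolation-geometric} reproduced in the excerpt: for $z_k \in \Gamma(\theta_i)$ one has $P_{z_k}(e^{i\theta_i}) \approx (1-|z_k|)^{-1}$, so $h(z_k) \gtrsim M/(1-|z_k|) \geq \log_+|v_k|$ for a suitable absolute constant $C$.

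The main obstacle I anticipate is the necessity of the finite-union-of-Stolz-angles condition in the first direction — i.e. showing that if $Z$ escapes every finite union of Stolz angles, then $\ell_{\text{Na}}(Z) \neq \ell_{\N}(Z)$. Equality $\nev|Z = \ell_{\text{Na}}(Z)$ forces $\ell_{\N}(Z) = \ell_{\text{Na}}(Z)$ (sandwiching $\nev|Z \subset \ell_{\N}(Z) \subset \ell_{\text{Na}}(Z)$), so it suffices to refute this. The delicate point is that ``not contained in finitely many Stolz angles'' is a statement about the angular distribution of the $z_k$ relative to their distances to the boundary, and one must convert it into the failure of a single positive measure on $\partial\D$ to dominate $(1-|z_k|)^{-1}$ at all the $z_k$ simultaneously — this requires carefully selecting a subsequence whose Privalov shadows are ``spread out'' enough that any measure large enough on each shadow would have infinite total mass, contradicting finiteness of $\nu(\partial\D)$ via the balayage-type computation $\sum_k (1-|z_k|) P_{z_k}(e^{i\theta})\,d\nu(\theta) \approx \nu(\partial\D)$ used at the end of the Corollary's proof.
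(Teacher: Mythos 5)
The paper does not prove Naftalevi\v c's theorem; it only states it with a citation to \cite{Na56}. So there is no ``paper's own proof'' to compare against, and I can only assess your plan on its merits.

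Your overall reduction is correct: equality $\nev|Z=\ell_{\text{Na}}(Z)$ forces $\nev|Z$ to be ideal (since $\ell_{\text{Na}}(Z)$ is), hence $Z$ is Nevanlinna interpolating and $\nev|Z=\ell_{\N}(Z)$, so the whole statement collapses to the two independent assertions (i) $Z$ Nevanlinna interpolating, and (ii) $\ell_{\N}(Z)=\ell_{\text{Na}}(Z)$, the latter being equivalent to the existence of $h\in\Har_+(\D)$ with $h(z_k)\ge(1-|z_k|)^{-1}$ for all $k$. Your ``if'' direction then matches exactly the argument the paper uses for Corollary~\ref{interpolation-geometric}(a): $\sum_i P[\delta_{\theta_i}]$ is the explicit majorant, and \eqref{blabdd} plus the Stolz-angle hypothesis gives interpolation.

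The one place where your sketch would fail as written is the proof that if $Z$ escapes every finite union of Stolz angles then no finite $\nu\ge 0$ satisfies $P[\nu](z_k)\ge(1-|z_k|)^{-1}$. You invoke the balayage identity $\sum_k(1-|z_k|)P[\nu](z_k)\approx\nu(\partial\D)$ from the end of the Corollary's proof, but that identity is valid only under the bounded-Poisson-balayage hypothesis of part (b), which you do not have; passing to a subsequence with disjoint shadows does not, by itself, give bounded balayage (the tails of the Poisson kernels can still pile up). Moreover, your phrase ``approaching infinitely many distinct boundary points'' covers only one of the two ways $Z$ can fail to lie in a finite union of Stolz angles; the other is tangential approach to finitely many boundary points. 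A cleaner route is purely measure-theoretic and avoids balayage entirely: from $P[\nu](z_k)\ge(1-|z_k|)^{-1}$ one extracts, with $M$ depending only on $\nu(\partial\D)$, a uniform lower bound $\nu\bigl(I_k^{(M)}\bigr)\ge c>0$ on the $2^M$-dilated Privalov shadows; letting $z_k$ tend to an accumulation point $\zeta$ shows $\nu(\{\zeta\})\ge c$, so $Z$ has at most $\nu(\partial\D)/c$ accumulation points; and if some subsequence tends tangentially to such a $\zeta$, the shadows $I_{k}^{(M)}$ shrink to $\{\zeta\}$ while avoiding $\zeta$, forcing $\nu(B(\zeta,\varepsilon)\setminus\{\zeta\})\ge c$ for all $\varepsilon$, contradicting continuity from above. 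This simultaneously produces the finite vertex set and the finite aperture, which is what the statement actually requires.
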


On the other hand, in a paper about the Smirnov class, Yanagihara \cite{yana2} 
had introduced  the sequence space 
\[
\ell_{\text{Ya}}(Z):=\bigl\{(z_k)_k:\sum_k (1-|z_k|) \log_+|v_k|<\infty\bigr\}.
\]
As we have just seen in the previous section, for any $Z \subset 
\D$,
$\ell^\infty \subset \ell_{\text{Ya}}(Z) \subset \ell_{\N}(Z) \subset 
\ell_{\text{Na}}(Z)$.

The target space $\ell_{\text{Na}}$ seems ``too big'', since
the growth condition it imposes
forces the sequences to be confined in a finite union of Stolz angles. 
Consequently a big class of $\hid$-interpolating sequences,
namely those containing a subsequence tending
tangentially to the boundary,  cannot be interpolating  in the sense of
Naftalevi\v c. This does not seem  natural, for $\hid$ is in the multiplier
space of $\nev$.

On the other hand, the target space $\ell_{\text{Ya}}(Z)$ seems ``too small'':
there are  $\hid$-interpolating sequences such that $\N|Z$ (or even $\smi\vert 
Z$)
does not embed into  $l_{\text{Ya}}(Z)$ \cite[Theorem 3]{yana2}. 

If one requires that $\nev\vert Z\supset 
\ell_{\text{Ya}}(Z)$,
this implies that all bounded values can be interpolated, and 
the sequence is Nevanlinna-interpolating. But then the natural
space of restrictions of functions in $\nev$ is the a priori larger 
$\ell_{\N}(Z)$. As seen in Corollary~\ref{interpolation-geometric}, 
the target space $\ell_{\text{Ya}}(Z)$ is only natural when considering 
sequences
for which $\mu_Z:= \sum_k (1-|z_k|) \delta_{z_k}$ has bounded Poisson balayage.

\subsection{Equivalent conditions for Nevanlinna 
interpolation}\label{int-equivalent}

The following result collects several alternative descriptions of Nevanlinna 
interpolating sequences. All of them have their corresponding analogues in 
$\H^\infty$.

Given $H\in\Har_+(\D)$, consider the  disks 
$\mD_k^H=D(z_k, e^{-H(z_k)})$ and the domain
\[
 \Omega_k^H=\D\setminus\bigcup_{\stackrel{j:j\neq 
k}{\rho(z_j,z_k)\le 1/2}}
  \mD_j^H.
\]
The proof of Theorem~\ref{alt-interpolation} 
below shows clearly that the choice of 
the constant 1/2 in the definition of $\Omega_k^H$ is of no relevance; it can 
be 
replaced by any $c\in (0,1)$. Let $\omega(z, E,\Omega)$ denote the harmonic 
measure at $z\in\Omega$ of
the set $E\subset \partial \Omega$ in the domain $\Omega$.

\begin{theorem}(\cite[Theorem 1.2]{HMN1})  \label{alt-interpolation}
Let $Z=(z_k)_k$ be a Blaschke sequence of distinct points in $\D$ 
and let $B$ be the Blaschke
product with zero set $Z$. The following statements are equivalent:
\begin{itemize}
\item [(a)] $Z$ is an interpolating sequence for $\N$, that is, there 
exists $ H\in \Har_+(\D)$ such that
\begin{equation*}
 (1-|z_k|^2)|B'(z_k)|=|B_k(z_k)|\ge e^{-H(z_k)},\quad 
k\in\mathbb N.
\end{equation*}
\item [(b)]  There exists $ H\in \Har_+(\D)$ such that $|B(z)|\ge 
e^{-H(z)}\rho(z,Z)$, $z\in\D$,
\item [(c)] There exists $ H\in \Har_+(\D)$ such that 
$|B(z)|+(1-|z|^2)|B'(z)|\ge e^{-H(z)}$, $z\in\D$,
\item [(d)] There exists $ H\in \Har_+(\D)$ such that the disks $ \mD_k^H$ 
are pairwise disjoint, and 
\[
 \inf_{k\in\mathbb N} \omega(z_k,\partial\D,\Omega_k^H)>0.
\]
\end{itemize}
\end{theorem}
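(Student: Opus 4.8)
The plan is to prove the equivalences by a chain of implications, using Theorem~\ref{nevinter} as the anchor point: (a) is by definition condition (c) of that theorem, so the real work is relating (a) to (b), (c), and (d). The natural strategy is to establish (a)$\Leftrightarrow$(c), then (c)$\Leftrightarrow$(b), and finally close the loop with (a)$\Leftrightarrow$(d).

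For (a)$\Rightarrow$(c) and (b), the key observation is a pointwise estimate. One writes $B=b_{z_k}B_k$ near a given $z_k$; since $b_{z_k}(z_k)=0$, one has $B(z_k)=0$, but the derivative picks up $|B'(z_k)|(1-|z_k|^2)=|B_k(z_k)|\ge e^{-H(z_k)}$ from (a). To upgrade this to an estimate valid at \emph{all} $z\in\D$ (not just at the points of $Z$), I would use Proposition~\ref{propsep}: for any fixed $\delta$, the ``far'' part $\prod_{\rho(z_j,z)\ge\delta}\rho(z_j,z)^{-1}$ has a positive harmonic majorant. So the delicate part is always the local factor, coming from the $z_j$ with $\rho(z_j,z)<\delta$. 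Combining the hypothesis at the points $z_k$ with a subharmonicity/Harnack propagation argument (the standard trick that $|B_k(z)|$ is comparable to $|B_k(z_k)|$ on a pseudohyperbolic disk when the sequence is weakly separated, which holds by the remark after Definition~\ref{weaksep}) yields the lower bounds $|B(z)|\ge e^{-H(z)}\rho(z,Z)$ and $|B(z)|+(1-|z|^2)|B'(z)|\ge e^{-H(z)}$ with a new (larger) harmonic function $H$. For the converse directions (b)$\Rightarrow$(a) and (c)$\Rightarrow$(a), one simply evaluates: at $z=z_k$, $\rho(z_k,Z)=0$ is useless directly, so instead one takes a limit $z\to z_k$ in (b), using $|B(z)|/\rho(z,z_k)\to(1-|z_k|^2)|B'(z_k)|=|B_k(z_k)|$ and continuity of $H$; for (c) one notes $B(z_k)=0$ forces the bound onto the derivative term.

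The equivalence with (d) is where harmonic measure enters, and I expect this to be the main obstacle. The disjointness of the disks $\mD_k^H$ is exactly the weak separation already known to be necessary (after the remark following Definition~\ref{weaksep}), so the content is the harmonic-measure lower bound $\inf_k\omega(z_k,\partial\D,\Omega_k^H)>0$. The idea for (a)$\Rightarrow$(d): the domain $\Omega_k^H$ is the disk with small pseudohyperbolic holes $\mD_j^H$ removed; the harmonic measure of $\partial\D$ in $\Omega_k^H$ is $1$ minus the harmonic measure of the union of the hole boundaries. One estimates the latter by a series $\sum_j \omega(z_k,\partial\mD_j^H,\Omega_k^H)$, and each term is controlled, via the comparison with the Green's function / a logarithmic potential, by roughly $e^{-H(z_j)}/\rho(z_j,z_k)$ times a bounded factor — so the total is dominated by $\sum_j e^{-H(z_j)}\rho(z_j,z_k)^{-1}$, which is essentially $\log|B_k(z_k)|^{-1}$-type data and is small by the Carleson-type condition (a), at least after absorbing constants. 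Making this rigorous requires the standard but somewhat technical machinery: a two-sided estimate for harmonic measure of a small disk seen from a definite distance, and a summation argument analogous to the proof sketch of Proposition~\ref{propsep}. For (d)$\Rightarrow$(a), conversely, one uses that $|B|$ restricted to $\partial\mD_k^H$ is at least $\approx e^{-H(z_k)}$ (up to the contribution of the removed disks, again controlled), so by the maximum principle applied on $\Omega_k^H$ together with the harmonic-measure lower bound, $|B_k(z_k)|=\lim_{z\to z_k}|B(z)|/\rho(z,z_k)$ inherits a lower bound of the form $e^{-H(z_k)}$ raised to a power depending on $\inf_k\omega$, which is still of the required exponential-with-harmonic-exponent form.

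In summary, the skeleton is: (a)$\Leftrightarrow$(c) by evaluating at/limiting to $z_k$ in one direction and by Proposition~\ref{propsep} plus weak-separation propagation in the other; (c)$\Leftrightarrow$(b) since $\rho(z,Z)$ and the gradient term $(1-|z|^2)|B'(z)|$ are interchangeable up to the far factor; and (a)$\Leftrightarrow$(d) via harmonic-measure estimates for the perforated domain $\Omega_k^H$, which is the technically heaviest step. Throughout, the recurring theme — and the reason the statement is true — is that every uniform bound in the classical Carleson theory gets replaced by control by a positive harmonic function, and the sums that in the bounded case converge because of uniform gaps here converge because they are Poisson integrals of $L^1$ data, as in Proposition~\ref{propsep}.
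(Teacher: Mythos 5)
Your overall plan — anchor on Theorem~\ref{nevinter}(c), use Proposition~\ref{propsep} to handle the "far zone," deduce weak separation from (a), propagate by Harnack on the zero-free pseudohyperbolic discs, and model the equivalence with (d) on the Garnett--Gehring--Jones argument — is sensible and is indeed the direction indicated in the text (the survey itself cites \cite{GGJ} as the model for (d)). The implications (b)$\Rightarrow$(a) and (c)$\Rightarrow$(a) by evaluation/limiting at $z_k$ are fine, and the far-zone/near-zone split for (a)$\Rightarrow$(b),(c) is the right idea, although "$\rho(z,Z)$ and $(1-|z|^2)|B'(z)|$ are interchangeable up to the far factor" glosses over the fact that both reductions use Proposition~\ref{propsep} and a Harnack step on $\log|B_{k,\mathrm{near}}|^{-1}$, not a direct pointwise comparison.

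There is, however, a genuine quantitative error in your treatment of (a)$\Leftrightarrow$(d). You estimate the harmonic measure of the small hole $\mD_j^H=D(z_j,e^{-H(z_j)})$ seen from $z_k$ as roughly $e^{-H(z_j)}/\rho(z_j,z_k)$, and then claim the sum over $j$ is "essentially $\log|B_k(z_k)|^{-1}$-type data." Neither step is correct. In two dimensions, harmonic measure of a small disc is \emph{logarithmic}, not linear, in the radius: for a hole of pseudohyperbolic radius $\varepsilon$ at pseudohyperbolic distance $\rho$ from the observation point, the harmonic measure is of order $\log(1/\rho)/\log(1/\varepsilon)$, i.e.\ here $\log(1/\rho(z_j,z_k))/H(z_j)$. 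This is precisely what makes the GGJ mechanism work: since the $z_j$ entering $\Omega_k^H$ satisfy $\rho(z_j,z_k)\le 1/2$, Harnack gives $H(z_j)\approx H(z_k)$, so the total is comparable to $\sum_j \log(1/\rho(z_j,z_k))/H(z_k)\le \log|B_k(z_k)|^{-1}/H(z_k)\le 1$ by (a), and replacing $H$ by $AH$ for large $A$ shrinks it to $1/A$. Your proposed bound $\sum_j e^{-H(z_j)}\rho(z_j,z_k)^{-1}$ is not controlled by condition (a) at all, and the sum cannot be made small by dilating $H$ in the way the argument requires. The same logarithmic scale is what you need in the converse (d)$\Rightarrow$(a), via the two-constants theorem applied to $\log|B_{k,\mathrm{near}}|$ on $\Omega_k^H$; the bound you sketch ("$|B|\approx e^{-H(z_k)}$ on $\partial\mD_k^H$") does not follow from (d) and is not how that direction closes.
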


The proof of (d) shows that it can be replaced by an a priori 
stronger statement: for every $\epsilon\in(0,1)$ there exists $ 
H\in \Har_+(\D)$ such that the disks $ \mD_k^H$ 
are pairwise disjoint, and 
\[
 \inf_{k\in\mathbb N} \omega(z_k,\partial\D,\Omega_k^H)\geq 1-\epsilon.
\]

Vasyunin proved in \cite{Vas78} (see also \cite{K-L}) that $B_Z$ is an 
$\H^\infty$ interpolating 
Blaschke product if and only if there exists $\delta>0$ such that
\begin{equation}\label{vasyu}
 |B_Z(z)|\geq \delta \rho(z,Z),\quad z\in\D.
\end{equation}
Therefore, condition (b) is, again, the natural counterpart of the 
existing condition for $\H^\infty$.

Similarly, statement (d) and its proof are modelled after the corresponding 
version for 
$\H^{\infty}$, proved by J.B. Garnett, F.W. Gehring and P.W. Jones in 
\cite{GGJ}. 
In that case the pseudohyperbolic discs $\mD_k^H=D(z_k, e^{-H(z_k)})$ have to 
be replaced
by uniform discs $D(z_k,\delta)$, $\delta>0$.

A useful, and natural, consequence of Theorem~\ref{alt-interpolation}(d) is 
that Nevanlinna interpolating sequences are stable under 
small pseudohyperbolic perturbations.

\begin{corollary}\label{stability}
 Let $Z = (z_k)_k$ be a Nevanlinna interpolating sequence and let 
$H\in\Har_+(\D)$,  satisfying Theorem~\ref{alt-interpolation}(a).  If 
$Z'=(z_k^\prime)_k \subset\mathbb D$ satisfies 
 \[
  \rho(z_k, z_k^\prime )\leq \frac 14 e^{-H(z_k)}\ ,\quad k\in\mathbb 
N,
 \]
 then $Z'$ is also a Nevanlinna interpolating sequence.
\end{corollary}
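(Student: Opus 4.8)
The plan is to establish condition (a) of Theorem~\ref{alt-interpolation} for the perturbed sequence $Z'$: I would produce $H'\in\Har_+(\D)$ with $|B_{Z',k}(z_k')|\ge e^{-H'(z_k')}$ for every $k$ by comparing, factor by factor, the products $|B_{Z',k}(z_k')|=\prod_{j\ne k}\rho(z_j',z_k')$ and $|B_{Z,k}(z_k)|=\prod_{j\ne k}\rho(z_j,z_k)$, the latter being $\ge e^{-H(z_k)}$ by hypothesis. The basic remark is that for each $j\ne k$ the number $\rho(z_j,z_k)$ is one of the factors of $|B_k(z_k)|$ and also one of the factors of $|B_j(z_j)|$; together with (a) this gives $\rho(z_j,z_k)\ge\max\{e^{-H(z_j)},e^{-H(z_k)}\}$, so both perturbations obey $\rho(z_k,z_k')\le\tfrac14 e^{-H(z_k)}\le\tfrac14\rho(z_j,z_k)$ and $\rho(z_j,z_j')\le\tfrac14\rho(z_j,z_k)$. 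From the triangle inequality for $\rho$ one gets at once $\rho(z_j',z_k')\ge\tfrac12\rho(z_j,z_k)>0$, so $Z'$ consists of distinct points; since $\rho(z_k,z_k')\le\tfrac14$ forces $1-|z_k'|\approx 1-|z_k|$, $Z'$ is again a Blaschke sequence.

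The core step is to bound $\sum_{j\ne k}\log\dfrac{\rho(z_j,z_k)}{\rho(z_j',z_k')}$ by $C\bigl(H(z_k)+1\bigr)$ with $C$ absolute. I would split the sum into the \emph{near} indices $\{j:\rho(z_j,z_k)<1/2\}$ and the \emph{far} indices $\{j:\rho(z_j,z_k)\ge1/2\}$, and carry out the perturbation in two steps, first $z_j\mapsto z_j'$ with the centre kept at $z_k$, then $z_k\mapsto z_k'$; in each step I would use the sharp form of the triangle inequality $\rho(a,c)\ge\frac{\rho(a,b)-\rho(b,c)}{1-\rho(a,b)\rho(b,c)}$ rather than the plain one. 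Two counting facts make this work. First, since every near factor is $<1/2$ while $\prod_{\text{near }j}\rho(z_j,z_k)\ge|B_k(z_k)|\ge e^{-H(z_k)}$, there are at most $H(z_k)/\log2$ near indices; and for near $j$ each factor is moved only by a bounded multiplicative constant (again because $\rho(z_j,z_j'),\rho(z_k,z_k')\le\tfrac14\rho(z_j,z_k)$), so the near contribution is $\lesssim H(z_k)$. Second, since $-\log x\ge1-x$ one has $\sum_{\text{far }j}(1-\rho(z_j,z_k))\le-\log|B_k(z_k)|\le H(z_k)$; the sharp triangle inequality converts the move $z_j\mapsto z_j'$ into a per‑factor bound $\lesssim(1-\rho(z_j,z_k))$, hence a far contribution $\lesssim H(z_k)$, while the recentring $z_k\mapsto z_k'$ contributes, per far factor, $\lesssim\rho(z_k,z_k')\,(1-\rho(z_j,z_k))\le\tfrac14 e^{-H(z_k)}(1-\rho(z_j,z_k))$, whose sum is $\lesssim H(z_k)e^{-H(z_k)}\le e^{-1}$, an absolute constant.

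Adding everything up gives $|B_{Z',k}(z_k')|\ge|B_{Z,k}(z_k)|\,e^{-C(H(z_k)+1)}\ge e^{-(C+1)H(z_k)-C}$, and since $\rho(z_k,z_k')\le1/4$, Harnack's inequality \eqref{harnack} gives $H(z_k)\le\tfrac53 H(z_k')$; thus $H'(z):=(C+2)H(z)+C$ lies in $\Har_+(\D)$ and satisfies $|B_{Z',k}(z_k')|\ge e^{-H'(z_k')}$ for all $k$, so $Z'$ is Nevanlinna interpolating by Theorem~\ref{alt-interpolation}. I expect the only delicate point to be the passage from the per‑factor estimates to a bound on the infinite product: the bare triangle inequality gives only the useless factor $1/2$ for each term, and it is the two counting facts — that both the number of points crowding near $z_k$ and the total mass of the far tail of $(z_j)$ around $z_k$ are governed by $H(z_k)$ — that keep the aggregate distortion under control. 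One could alternatively run the argument through characterisation (d), exploiting that the harmonic‑measure lower bound is stable when the base point and the obstacle disks are moved by a fixed fraction of their mutual pseudohyperbolic distances, which parallels the proof of stability of $\H^\infty$‑interpolating sequences by Garnett, Gehring and Jones.
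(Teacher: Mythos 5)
Your proof is correct and follows a genuinely different route from the one the paper suggests. The paper presents the corollary as a consequence of characterization (d) of Theorem~\ref{alt-interpolation}: the harmonic-measure lower bound and the disjointness of the obstacle disks are stable when the base point and the disks are moved by a fraction of $e^{-H(z_k)}$ (replacing $H$ by, say, $2H$) --- exactly the alternative you sketch at the end, in parallel with Garnett--Gehring--Jones. Your main argument instead works directly from condition (a), comparing the two Blaschke products factor by factor; the two counting facts you isolate (at most $H(z_k)/\log 2$ near indices, and $\sum_{\mathrm{far}}(1-\rho(z_j,z_k))\le H(z_k)$ via $-\log x\ge 1-x$) are precisely what keeps the aggregate distortion at $O\bigl(H(z_k)+1\bigr)$ rather than $O(\#Z)$. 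One step is worth spelling out to justify your recentring estimate: after moving $z_j\mapsto z_j'$ the sharp triangle inequality gives
\[
1-\rho(z_j',z_k)\le\frac{\bigl(1-\rho(z_j,z_k)\bigr)\bigl(1+\rho(z_j,z_j')\bigr)}{1-\rho(z_j,z_k)\,\rho(z_j,z_j')}\le\tfrac53\,\bigl(1-\rho(z_j,z_k)\bigr),
\]
so the intermediate quantity $1-\rho(z_j',z_k)$ really is comparable to $1-\rho(z_j,z_k)$, and the recentring does contribute $\lesssim\rho(z_k,z_k')\,\bigl(1-\rho(z_j,z_k)\bigr)$ per far factor as you claim. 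The only slip is cosmetic: with $H(z_k)\le\tfrac53 H(z_k')$ from Harnack, the choice $H'=(C+2)H+C$ is too small once $C>\tfrac12$; take $H'=\tfrac53(C+1)H+C$ instead. The factor-by-factor comparison is more computational but entirely self-contained; the route through (d) is conceptually cleaner and mirrors the known stability argument for $\H^\infty$-interpolating sequences.
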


\subsection{Peak functions and interpolation}

When $Z=(z_k)_k$ is $\H^\infty$-interpolating, an application of the Open 
Mapping Theorem to the restriction operator $\mathcal R(f)=(f(z_k))_k$ shows 
that there exist $C>0$ and functions $f_k\in \H^\infty$, $k\in\mathbb N$, such 
that $\|f_k\|_\infty\leq C$ and
\begin{equation}\label{1i0}
 f_k(z_j)=
 \begin{cases}
  1\ &\textrm{if $j=k$}\\
  0 &\textrm{if $j\neq k$}.
 \end{cases}
\end{equation}
As it turns out, the existence of such peak functions in fact implies that $Z$ 
is $\H^\infty$-interpolating (see \cite[Chap. VII]{Ga}).

For the Nevanlinna class, which, as mentioned, has a much weaker structure than 
$\H^\infty$, the analogous result does not hold.
On the one hand, the proof of Theorem~\ref{nevinter} shows that when $Z$ is 
Nevanlinna interpolating
there exist $C>0$ and $f_k\in \N$, $k\in\mathbb N$, such that 
$N(f_k)\leq C$ and \eqref{1i0} holds. Notice that no Open Mapping Theorem can be 
applied to deduce this.

But the converse fails: there are examples of sequences $Z$ for which there exist $C>0$ and 
$f_k\in \N$, $k\in\mathbb N$, with
$N(f_k)\leq C$, satisfying \eqref{1i0}, but which are not Nevanlinna interpolating (see 
\cite[Theorem 1.1]{MM}).

\subsection{Finite unions of interpolating sequences}

Interpolation can be considered also with multiplicities, or more generally, 
with divided differences.
We show next that a discrete sequence $Z=(z_k)_k$ of the unit disk is the union 
of
$n$ interpolating sequences for the Nevanlinna class $\N$ if and only if
the trace $\N|Z$ coincides with the space of functions on
$Z$ for which the pseudohyperbolic divided differences of order $n-1$ are 
uniformly
controlled by a positive harmonic function. 

In Section~\ref{wep} we will see other characterizations of finite unions of 
Nevanlinna interpolating sequences.

\begin{definition}
Let $Z=(z_k)_k$ be a discrete sequence in $\D$ and let $\omega$ be a function 
given on
$Z$. The \emph{pseudohyperbolic divided differences of  $\omega$} are 
defined by induction
as follows 
\[
\begin{split}
\Delta^0 \omega(z_{k_1})  &=\omega(z_{k_1})\ ,\\
\Delta^j\omega(z_{k_1},\ldots,z_{k_{j+1}}) 
&=\displaystyle\frac{\Delta^{j-1}\omega(z_{k_2},\ldots,z_{k_{j+1}})-\Delta^{
j-1}\omega(z_{k_1},\ldots,z_{k_j})}{b_{z_{k_1}}(z_{k_{j+1}})}\qquad 
j\geq
1.\\
\end{split}
\]

For any $n\in \mathbb N$, denote 
\[
Z^n=\{(z_{k_{1}},\ldots,z_{k_{n}})\in
Z\times\stackrel{\stackrel{n}{\smile}}{\cdots}\times Z\; :\; 
k_j\not=k_l\ \textrm{if}\  j\not=l\},
\] 
and consider the set $X^{n-1}(Z)$ consisting of the
functions defined in $Z$ with divided differences of order $n-1$ uniformly
controlled by a positive harmonic function $H$ i.e., such that for some 
$H\in\Har_+(\D)$,
\[
\sup_{(z_{k_{1}},\ldots,z_{k_{n}})\in Z^n} \vert
\Delta^{n-1}\omega(z_{k_{1}},\ldots,z_{k_{n}})\vert
e^{-[H(z_{k_{1}})+\cdots+H(z_{k_{n}})]}<+\infty\ .
\]
\end{definition}

It is not difficult to see that 
$X^n(Z)\subset X^{n-1}(Z)\subset\cdots\subset X^0(Z)=\N(Z)$. For example,
if $\omega\in X^1(Z)$, we can take a fixed $z_{k_0}\in Z$ and write
\[
 \omega(z_k)=\frac{\omega(z_k)-\omega(z_{k_0})}{b_{z_k}(z_{k_0})} 
b_{z_k}(z_{k_0})
 + \omega (z_{k_0}).
\]
Using that there exists $H\in\Har_+(\D)$ with
\[
 |\Delta^1(z_k, z_{k_0})|=\bigl|\frac{\omega(z_k)-
 \omega(z_{k_0})}{b_{z_k}(z_{k_0})}\bigr|\leq e^{H(z_k)+H(z_{k_0})}
\]
we readily see that there is $H_2\in \Har_+(\D)$, depending on $H$ and 
$z_{k_0}$, such that
\[
 |\omega(z_k)|\leq e^{H_2(z_k)}\ \quad k\in\mathbb N.
\]

The following result 
is the analogue of Vasyunin's description of the
sequences $Z$ in $\D$ such that the trace of the algebra 
$\H^\infty$ on $Z$ equals the space of
pseudohyperbolic divided differences of order $n$ (see \cite{Vas83}, 
\cite{Vas84}).
Similar results hold also for Hardy spaces (see \cite{BNO} and \cite{H}) and 
the 
H\"ormander algebras, both in $\C$ and in $\D$ \cite{MOO}.

\begin{theorem}[Main Theorem \cite{HMN2}]
\label{divided differences}
The trace $\N| Z$ of $\N$ on $Z$ coincides with the set
$X^{n-1}(Z)$  if and only if $Z$ is
the union of $n$ interpolating sequences for $\N$.
\end{theorem}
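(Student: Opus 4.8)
The plan is to prove the two implications separately, and the natural strategy is to reduce each to the already-established characterization of a single Nevanlinna interpolating sequence (Theorem~\ref{nevinter}, in the form of condition (c), or equivalently Theorem~\ref{alt-interpolation}(b)). For the ``if'' direction, suppose $Z=\bigcup_{i=1}^n Z^{(i)}$ with each $Z^{(i)}$ Nevanlinna interpolating, and let $\omega$ be a function on $Z$ whose divided differences of order $n-1$ are controlled by some $H\in\Har_+(\D)$; we must produce $f\in\N$ with $f|Z=\omega$. The idea is the standard one for divided differences: build the interpolant by a Newton-type scheme, or, more robustly, by solving $n$ successive interpolation problems and gluing with Blaschke products $B_{Z^{(i)}}$. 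Concretely, one interpolates $\omega$ on $Z^{(1)}$ by some $f_1\in\N$ (possible since $Z^{(1)}$ is interpolating and the trace of $\omega$ lies in $\ell_\N(Z^{(1)})$ by the $X^{n-1}\subset X^0$ inclusion), then observes that $(\omega-f_1)/B_{Z^{(1)}}$ has divided differences of order $n-2$ controlled by a (new) positive harmonic function on $Z\setminus Z^{(1)}$, and recurses. The bookkeeping that the successive quotients still have harmonic control — using that $|B_{Z^{(1)}}|\geq e^{-H'}\rho(\cdot,Z^{(1)})$ by Theorem~\ref{alt-interpolation}(b), together with Harnack — is where the estimates live, but each step is the $\N$-analogue of a known $\H^\infty$ computation.

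For the ``only if'' direction, assume $\N|Z=X^{n-1}(Z)$; we must split $Z$ into $n$ interpolating sequences. Here the model is Vasyunin's $\H^\infty$ argument. The first step is to extract geometric consequences of the hypothesis: testing against well-chosen data $\omega$ (e.g. indicator-type sequences) forces a weak-separation-with-multiplicity condition, namely that $Z$ can be partitioned into $n$ pieces each of which is weakly separated in the sense of Definition~\ref{weaksep}; this is the place a combinatorial/graph-coloring argument enters (bound the number of points of $Z$ in each relevant pseudohyperbolic disk by $n$, then color). The second step is to upgrade each weakly separated piece to genuinely Nevanlinna interpolating by verifying condition (c) of Theorem~\ref{nevinter}, i.e. producing a positive harmonic majorant for $\log|B_{Z^{(i)},k}(z_k)|^{-1}$; this majorant should be assembled from the harmonic function $H$ furnished by the hypothesis $\N|Z=X^{n-1}(Z)$, by choosing test data that isolate a single point of $Z^{(i)}$ and reading off the resulting bound on the local Blaschke product, much as in the proof of Proposition~\ref{propsep}.

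The main obstacle I expect is the ``only if'' direction, and within it the passage from the abstract trace identity $\N|Z=X^{n-1}(Z)$ to quantitative, $k$-by-$k$ estimates with a \emph{single} positive harmonic function $H$ doing all the work — the $\N$-setting offers no Open Mapping or Closed Graph theorem (as emphasized in Section~\ref{topology}), so the uniform constant that Vasyunin gets for free must instead be manufactured as a harmonic majorant. The technical heart is therefore a closed-graph substitute: one shows that if $\N|Z\subset X^{n-1}(Z)$ fails to be controlled by one $H$, a diagonal/gliding-hump construction produces $\omega\in X^{n-1}(Z)$ not in $\N|Z$, contradicting the hypothesis. Once that uniformization is in hand, the coloring and the verification of Theorem~\ref{nevinter}(c) for each color class are comparatively routine, following the $\H^\infty$ template with uniform bounds systematically replaced by positive harmonic majorants, exactly per the guiding principle of the survey.
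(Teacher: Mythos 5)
The paper does not give a proof of this theorem: it is quoted as the Main Theorem of \cite{HMN2}, so there is no in-paper argument to compare against; I therefore assess your sketch on its own terms. Your outline --- a Newton-type inductive scheme for sufficiency and a Vasyunin-style decomposition for necessity --- is the natural template. The ``if'' direction as you describe it can be made to work: near a point $w_0\in Z^{(1)}$, the lower bound $|B_{Z^{(1)}}(z)|\geq e^{-H'(z)}\rho(z,Z^{(1)})$ from Theorem~\ref{alt-interpolation}(b) compensates exactly the factor $|b_{w_0}(z)|$ gained from the divided-difference hypothesis (together with a Schwarz--Pick-type bound on the first interpolant $f_1$ and Harnack's inequality), so the quotient $(\omega-f_1)/B_{Z^{(1)}}$ does land in $X^{n-2}$ on $Z\setminus Z^{(1)}$. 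The inductive bookkeeping for general $n$ is considerably heavier than your sketch admits, but there is no conceptual obstacle there.

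The genuine gap is in the ``only if'' direction, which you locate but do not close. You assert that the needed single positive harmonic majorant can be ``manufactured'' by a diagonal/gliding-hump construction playing the role of the unavailable Open Mapping theorem. That is not a proof: $(\N,d)$ is not even a topological vector space (Section~\ref{topology}), so there is no Baire-category machinery to lean on, and any bad data built by a gliding hump must itself belong to $X^{n-1}(Z)$, i.e.\ come equipped with its own positive harmonic control, which severely constrains the construction --- this is precisely what already makes the implication (a)$\Rightarrow$(c) of Theorem~\ref{nevinter} nontrivial in \cite{HMNT}. In addition, the coloring step as stated (``bound the number of points of $Z$ in each pseudohyperbolic disk by $n$, then color'') begs the question: in the Nevanlinna setting the relevant disks have $H$-dependent radius $e^{-H(z)}$, so producing the $H$ for which the multiplicity bound holds is itself the uniformization problem in disguise. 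Your proposal is an honest and accurate map of the terrain, but the hard pass remains uncrossed.
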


\section{Sampling sets}\label{sampling}

\subsection{Sets of determination for $\hid$.}
One may consider the dual problem to interpolation: which sequences are 
``thick" enough so that the norm of a function can be computed from its values 
on the sequence? 
Here there is no reason to restrict ourselves to sequences.

\begin{definition}
\label{determin}
We say that $\Lambda \subset \D$ is a \emph{set of determination} for 
$\H^\infty $
if for any $f \in \H^\infty $, $\|f\|_\infty = \sup\limits_{z\in \Lambda} 
|f(z)|$.
\end{definition}

Recall that we say that a sequence $(z_k)_k$ converges to $z^* \in \partial \D$ 
\emph{non-tangentially}
if $\lim_{k\to\infty} z_k=z^*$ and
if there exists $A>0$ such that for all $k$, $|z^*-z_k| \le (1+A) (1-|z_k|)$. 
We write $NT\lim_{z\to\eit} f(z)=\lambda$ if the limit is achieved over all 
sequences tending to $\eit$ non-tangentially.

Also, as commented in the Preliminaries, for $f\in\H^\infty$
the non-tangential boundary value $f^*(\eit)=NT\lim_{z\to\eit} f(z)$
exists a.e. $\theta\in[0,2\pi)$ (see for instance 
\cite[Theorem 3.1, p. 557]{Ga}).

Determination sets for $\H^\infty$ are characterized by a simple geometric 
condition.

\begin{theorem}[Brown, Shields and Zeller, 1960 \cite{BrShZe}]
\label{thbsz}
$\Lambda$ is a set of determination  for $\H^\infty$  if and only if
the set $NT(\Lambda)$ consisting of the $\zeta\in\partial\D$ which
are a non-tangential limit of a sequence of points in $\Lambda$
has full measure, i.e. $|NT(\Lambda)|=2\pi$.
\end{theorem}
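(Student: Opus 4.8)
The plan is to prove the two implications separately, using that bounded harmonic/holomorphic functions in $\D$ are recovered from their nontangential boundary values by the Poisson integral, together with Fatou's theorem on nontangential limits. For the \emph{sufficiency} direction, assume $|NT(\Lambda)|=2\pi$ and fix $f\in\H^\infty$ with $\|f\|_\infty=1$ (by normalization). Given $\varepsilon>0$, I would argue that the set $E_\varepsilon:=\{\eit : \limsup_{z\to\eit,\,z\in\Lambda}|f(z)|\ge 1-\varepsilon\}$ — more precisely the set of $\eit$ that are nontangential limits of a sequence in $\Lambda$ along which $|f|\to|f^*(\eit)|$ — has full measure. Indeed, by Fatou's theorem $f$ has nontangential boundary values $f^*$ a.e., and for a.e. $\eit$ the value $f(z_k)\to f^*(\eit)$ whenever $z_k\to\eit$ nontangentially; since $NT(\Lambda)$ has full measure, for a.e. $\eit$ we can pick such a sequence inside $\Lambda$. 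Combining this with $\|f^*\|_{L^\infty(\partial\D)}=\|f\|_\infty=1$ (again because $f=\mathcal P[f^*]$ and the Poisson kernel is a positive approximate identity), there exist boundary points $\eit$ with $|f^*(\eit)|$ arbitrarily close to $1$ that are nontangential limits of points of $\Lambda$; hence $\sup_{z\in\Lambda}|f(z)|\ge 1-\varepsilon$ for every $\varepsilon$, i.e. $\sup_{z\in\Lambda}|f(z)|=\|f\|_\infty$.

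For the \emph{necessity} direction, suppose $|NT(\Lambda)|<2\pi$, so there is a Borel set $F\subset\partial\D$ of positive measure disjoint (up to null sets) from $NT(\Lambda)$. I would construct $f\in\H^\infty$ with $\|f\|_\infty=1$ but $\sup_{z\in\Lambda}|f(z)|<1$. The natural candidate is an outer function: set $w=-c\,\chi_F$ for a suitable constant $c>0$ and let $f=\mathcal O$ be the outer function with $\log|f|=\mathcal P[w]$, i.e. $|f(z)|=\exp(-c\,\mathcal P[\chi_F](z))\le 1$, so $\|f\|_\infty=1$ (its boundary modulus is $1$ off $F$). The point is to show $\sup_{z\in\Lambda}|f(z)|<1$, equivalently $\inf_{z\in\Lambda}\mathcal P[\chi_F](z)>0$. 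This is where the geometric hypothesis enters: if $z_k\in\Lambda$ with $\mathcal P[\chi_F](z_k)\to 0$, then along a subsequence $z_k\to\eit$ for some $\eit\in\partial\D$, and one shows that $\mathcal P[\chi_F](z_k)\to 0$ forces this approach to be nontangential with $\eit\notin F$ — more carefully, it forces $\eit$ to be a point of density $0$ of $F$ and the approach nontangential, so $\eit\in NT(\Lambda)$ would have to avoid a full-measure subset of $F$; arranging the argument so that positivity of $|F|$ is genuinely used (e.g. by first passing to a compact subset of $F$ of positive measure, or a Lebesgue density point) gives the contradiction.

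The main obstacle is the quantitative lower bound $\inf_{z\in\Lambda}\mathcal P[\chi_F](z)>0$ in the necessity part: one must show that a point $z$ cannot make the Poisson mass of a positive-measure set $F$ small unless $z$ is close (nontangentially) to a point of $F$ having high density, i.e. a uniform estimate of the form ``$\mathcal P[\chi_F](z)\ge \eta(\Lambda,F)>0$ for all $z\in\Lambda$''. The clean way to get this is by contradiction plus a normal-families/subsequence argument as above, using that $\mathcal P[\chi_F]$ is a positive harmonic function whose nontangential boundary values equal $\chi_F$ a.e. (Fatou), so its nontangential limit is $1$ a.e. on $F$; any sequence in $\Lambda$ on which it tends to $0$ would produce a nontangential accumulation point of $\Lambda$ lying in a positive-measure subset of $F$, contradicting $F\cap NT(\Lambda)$ being null. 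The sufficiency direction is comparatively routine once Fatou's theorem and the identity $\|f\|_\infty=\|f^*\|_{L^\infty}$ are invoked.
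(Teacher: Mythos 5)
The paper itself only cites \cite{BrShZe} and gives no proof, so there is no internal argument to compare against; what follows is a review of your proposal on its own terms.

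Your sufficiency half is fine: Fatou's theorem gives nontangential boundary values $f^*$ a.e., $\|f^*\|_{L^\infty(\partial\D)}=\|f\|_\infty$ since $f=\mathcal P[f^*]$, and full measure of $NT(\Lambda)$ lets you find, for every $\varepsilon>0$, a boundary point $\eit$ with $|f^*(\eit)|>1-\varepsilon$ which is simultaneously a Fatou point and a nontangential limit of a sequence from $\Lambda$, so $\sup_\Lambda|f|\ge 1-\varepsilon$.

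The necessity half has a genuine gap: your outer function is oriented the wrong way and the estimate you need is simply false. You take $F\subset\partial\D\setminus NT(\Lambda)$ of positive measure, set $|f|=\exp(-c\,\mathcal P[\chi_F])$, and then need $\inf_{z\in\Lambda}\mathcal P[\chi_F](z)>0$. But $\Lambda$ avoids $F$ nontangentially, which makes $\mathcal P[\chi_F]$ \emph{small} along $\Lambda$, not bounded below. Concretely, if $\Lambda=(1-1/n)_n$ is radial to $1\in\partial\D$, so $NT(\Lambda)=\{1\}$ has measure zero (we are squarely in the necessity case), and $F$ is any arc away from $1$, then $\mathcal P[\chi_F](1-1/n)\to 0$, so $\sup_\Lambda|f|=1=\|f\|_\infty$ and your $f$ certifies nothing. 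The subsequence argument cannot repair this: $\mathcal P[\chi_F](z_k)\to 0$ does not force a tangential approach (radial approach at a point away from $F$ produces it just as readily), and the would-be conclusion ``$\eit\in NT(\Lambda)$ and $\eit$ is a density point of $F^c$'' is not a contradiction. The correct construction is the complementary one: take a compact $K\subset\partial\D\setminus NT(\Lambda)$ of positive measure and, by an exhaustion/Egoroff argument, shrink $K$ so that for some uniform aperture $A$ and depth $\delta$, $\Lambda$ misses the sawtooth $\Omega=\bigcup_{\eit\in K}\{z:|z-\eit|<A(1-|z|),\;1-|z|<\delta\}$. Then take $f$ outer with $|f^*|=1$ on $K$ and $|f^*|=e^{-1}$ off $K$, i.e.\ $|f|=\exp\bigl(\mathcal P[\chi_K]-1\bigr)$; this has $\|f\|_\infty=1$ since $\mathcal P[\chi_K]\to 1$ nontangentially at density points of $K$. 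For $z\notin\Omega$ with $1-|z|<\delta$ one has $|z-\eit|\ge A(1-|z|)$ for all $\eit\in K$, so $\mathcal P[\chi_K](z)\lesssim 1/A$, while for $1-|z|\ge\delta$ one has $\mathcal P[\chi_K](z)\le\gamma<1$ by compactness; hence $\sup_\Lambda\mathcal P[\chi_K]<1$ and $\sup_\Lambda|f|<1$. In short, you want $|f|$ \emph{large} on the set your $\Lambda$ cannot reach nontangentially, and small where it can, which is the opposite of what you wrote.
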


\subsection{Defining the question for $\nev$.} 
In general a sequence $Z=(z_k)_k$ is called ``sampling'' for a space of
holomorphic functions $X$ when any function $f\in X$ is determined by its
restriction $f|Z$, with control of
norms. For for the Nevanlinna class, which has nothing like a norm, the 
situation is not so obvious.
We start from the notion of set of determination for $\hid$. Instead of 
requiring
that the least upper bound obtained from the values of $f|Z$ be the same as 
$\sup_{\D}|f|$, 
we will consider the set of harmonic majorants of $\left( \log_+ |f|\right)|Z$ 
compared
to the set of harmonic majorants of $ \log_+ |f|$.

Recall that the topology on $\nev$ is defined with the help of the functional 
$N$, 
defined in \eqref{ndef} and \eqref{ndef2}. We give a variant. 
\[
N_+(f)=\lim_{r\to 1}\frac 1{2\pi}\int_0^{2\pi}\log_+|f(r e^{i\theta})|d\theta=
\inf\bigl\{h(0)\, :\, \textrm{$h\in\Har_+(\D)$ with $\log_+|f|\leq h$}\bigr\}\ .
\]
Notice that the expression in \eqref{ndef2} and the second expression in the 
equation above
make sense for any measurable function on the disk, and we will apply them to 
$f|Z$.

\begin{theorem}{\cite[Theorem 2.2]{MT}}
\label{equivalencia}
The following properties of $Z=(z_k)_k\subset\D$ are equivalent:
\begin{itemize}
\item[(a)] There exists $C>0$ such that for any $f\in\nev$, $N(f) \le N(f|Z)+C$.
\item[(b)] For any $f\in\nev$, $N_+(f) = N_+(f|Z)$.
\item[(c)] $Z$ is a  set of determination for $\nev$, i.e. 
any $f\in \nev$ with $\sup_Z |f|<\infty$ must be
bounded on the whole unit disk.
\item[(d)] For any $f\in\nev$ and $h\in\Har_+(\D)$  such that 
$\log_+|f(z_k)|\leq
h(z_k)$ for all $k$, then necessarily $\log_+|f|\leq h$ on the whole unit disk.
\end{itemize}
\end{theorem}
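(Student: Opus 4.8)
The plan is to prove the cycle of implications (c) $\Rightarrow$ (d) $\Rightarrow$ (b) $\Rightarrow$ (a) $\Rightarrow$ (c), or some convenient reordering thereof. The implications (d) $\Rightarrow$ (b) and (b) $\Rightarrow$ (a) should be essentially formal: if (d) holds then for any $f\in\N$ the harmonic majorants of $\log_+|f|$ are exactly the harmonic majorants of $(\log_+|f|)|Z$, so taking the infimum of $h(0)$ over this common family in the formula for $N_+$ gives $N_+(f)=N_+(f|Z)$; and passing from the $N_+$-statement to the $N$-statement (a) only costs an additive constant, since $\log(1+|f|)$ and $\log_+|f|$ differ by at most $\log 2$ pointwise (so $N$ and $N_+$ differ by at most $\log 2$), and one must be slightly careful because in (a) we compare the two-sided extremal functional $N$ rather than $N_+$ — but $N(f)\le N_+(f)+\log2$ and $N_+(f|Z)\le N(f|Z)$, and an application of Theorem~\ref{zeroes} (the least harmonic majorant of $\log|f|$ is $\log|f/B_Z|$) lets one control $N$ from above by a harmonic majorant of $\log_+|f|$ up to an absolute constant, so a chase through these inequalities closes the loop.

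The implication (a) $\Rightarrow$ (c) is also easy: if $\sup_Z|f|=:M<\infty$ then $N(f|Z)\le\log(1+M)<\infty$, so (a) gives $N(f)<\infty$; but $N(f)<\infty$ forces $f\in\N$ with a finite value of the functional, and more importantly, applying (a) to the dilates or to powers $f^n$ — using $N(f^n|Z)\le\log(1+M^n)\approx n\log M$ and hence $N(f^n)\lesssim n\log M + C$ — together with the pointwise estimate $(1-|z|)\log(1+|f^n(z)|)\le 2N(f^n)$ from Section~\ref{topology}, yields $(1-|z|)\,n\log(1+|f(z)|)\lesssim n\log M+C$; dividing by $n$ and letting $n\to\infty$ gives $(1-|z|)\log(1+|f(z)|)\lesssim\log M$ uniformly, which is still not quite boundedness but can be upgraded: more efficiently, one applies (a) directly to $f$ and notes that actually the \emph{right} route is (d) $\Rightarrow$ (c), which is immediate (take $f$ bounded on $Z$, say by $e^{h(0)}$ for a suitable constant $h$, apply the constant harmonic function; more precisely if $\sup_Z|f|\le M$ then $\log_+|f|\le\log_+M$ on $Z$, so the constant harmonic majorant $h\equiv\log_+M$ works at the points $z_k$, and (d) forces $\log_+|f|\le\log_+M$ everywhere, i.e. $\|f\|_\infty\le\max(1,M)$). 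So I would in fact organize the proof as (b) $\Leftrightarrow$ (a) (formal, via the $\log2$ comparison and \eqref{ndef2}), (d) $\Rightarrow$ (b) (formal, via \eqref{ndef2}), (d) $\Rightarrow$ (c) (formal, constant majorant), and then the one substantial implication (c) $\Rightarrow$ (d).

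The main obstacle is the implication (c) $\Rightarrow$ (d): from the qualitative hypothesis that every $\N$-function bounded on $Z$ is bounded on $\D$, we must recover the quantitative statement that a single positive harmonic function dominating $\log_+|f|$ on $Z$ dominates it on all of $\D$. The natural approach is the contrapositive together with a normal-families / closed-graph-type argument, except that — as the paper emphasizes — $\N$ is not a topological vector space and the Closed Graph and Open Mapping theorems are unavailable, so one cannot simply invoke them. Instead I expect the argument to proceed by contradiction and a gliding-hump construction: supposing (d) fails, there is $f\in\N$ and $h\in\Har_+(\D)$ with $\log_+|f(z_k)|\le h(z_k)$ for all $k$ but $\log_+|f(w)|>h(w)$ at some $w$ (in fact, failing by arbitrarily large amounts along some sequence $w_j\to\partial\D$). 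Writing $h=\Re H$ for a holomorphic $H$ and passing to $g=fe^{-H}\in\N$ (which is bounded in modulus by $1$ on $Z$ but unbounded on $\D$), one would like to directly apply (c) to $g$ to get a contradiction — and this works provided $e^{-H}$ genuinely lands $f$ in $\N$, which it does since $H$ has positive real part so $|e^{-H}|\le1$, hence $g=f\cdot e^{-H}$ with $\log_+|g|\le\log_+|f|\le h$, so $g\in\N$; then $\sup_Z|g|\le1$ but $g$ unbounded on $\D$ contradicts (c). Thus the real content is just the reduction via the holomorphic primitive $H$ of $h$ (which exists since $\D$ is simply connected) — the step to watch is making sure the strict/unbounded violation of the inequality in (the negation of) (d) is translated correctly into genuine unboundedness of $g$, i.e. that "$\log_+|f|\le h$ fails somewhere" can be boosted to "$|f|e^{-h}$ is unbounded" — which requires noting that if $\log_+|f|\le h$ merely fails at one point it might still be that $|g|$ is bounded, so one must start from the correct negation, namely that \emph{no} positive harmonic majorant of $(\log_+|f|)|Z$ majorizes $\log_+|f|$ on $\D$; combined with the hypothesis that $h$ itself majorizes on $Z$, if $h$ failed to majorize on $\D$ we could still have $|g|$ bounded, so the cleanest path is really (c) $\Rightarrow$ (d) phrased as: given $f,h$ as in (d)'s hypothesis, $g=fe^{-H}\in\N$ satisfies $\sup_Z|g|\le1$, so by (c) $\|g\|_\infty=:M<\infty$, whence $\log_+|f|\le\log M+h$ on $\D$; this is not literally "$\le h$" but it shows $\log|f|\le\Re H+\log M$, and then a second iteration or a rescaling argument (apply the same to $e^{-\epsilon}f$, or use that the \emph{least} harmonic majorant is what \eqref{ndef2} extremizes) removes the constant. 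I expect the authors handle this last point by working with the least harmonic majorant from Theorem~\ref{zeroes} rather than an arbitrary one, which makes the additive constant disappear automatically.
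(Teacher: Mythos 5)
Your organization of the easy implications is mostly right, but the proof as written does not close the cycle, and the one substantial implication you isolate has a genuine gap that your proposed fixes do not resolve.

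\textbf{The cycle is not closed.} The implications you end up committing to are $(a)\Leftrightarrow(b)$, $(d)\Rightarrow(b)$, $(d)\Rightarrow(c)$ and $(c)\Rightarrow(d)$. Together these give $(c)\Leftrightarrow(d)$ and $(c),(d)\Rightarrow(a),(b)$, but you never prove anything of the form $(a)$ or $(b)\Rightarrow(c)$ or $(d)$. You start an $(a)\Rightarrow(c)$ argument, correctly observe it only yields $(1-|z|)\log(1+|f(z)|)\lesssim\log M$ (which is not boundedness), and then abandon it in favor of $(d)\Rightarrow(c)$, which does not help with the missing direction. In fact $(a)\Rightarrow(c)$ \emph{can} be pushed through, by a slightly sharper rescaled powering: with $\sup_Z|f|=M$ and any $a>\log M$, set $f_n:=e^{-na}f^n$; then $\sup_Z|f_n|\le 1$, so $N(f_n|Z)\le\log 2$ and by $(a)$, $N(f_n)\le\log2+C$ uniformly in $n$. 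The pointwise estimate $(1-|z|)\log(1+|f_n(z)|)\le 2N(f_n)$ then forces, at any $z_0$ with $|f(z_0)|>e^a$, the bound $(1-|z_0|)\,n\log\!\left(|f(z_0)|e^{-a}\right)\le 2\log2+2C$ for all $n$, which is absurd. Hence $\|f\|_\infty\le e^a$, and letting $a\downarrow\log M$ gives $\|f\|_\infty\le\sup_Z|f|$. Note this is strictly stronger than $(c)$: it shows $Z$ is a set of determination for $\H^\infty$ with \emph{exact} norm equality. Without something like this, the equivalence is not proved.

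\textbf{The additive constant in $(c)\Rightarrow(d)$ is a real gap.} Your reduction $g=fe^{-H}$, $\sup_Z|g|\le1$, is the right move, but $(c)$ only gives $\|g\|_\infty=:M<\infty$ and hence $\log_+|f|\le h+\log M$. Your proposed fixes do not remove $\log M$: applying the argument to $e^{-\epsilon}f$ multiplies $g$ by $e^{-\epsilon}$ but also shrinks the target bound, giving back $\log|f|\le h+\log M$; powering to $f^n$ gives $g^n$ with $\|g^n\|_\infty=M^n$, and after dividing by $n$ one lands back at $\log_+|f|\le h+\log M$; and the reference to Theorem~\ref{zeroes} conflates the sampling sequence $Z$ with the zero set of $f$, which plays no role here. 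What is actually needed is to upgrade $(c)$ to the norm-exact statement $\|g\|_\infty\le\sup_Z|g|$ for $g\in\H^\infty$. One clean way: if $\|g\|_\infty=M>1\ge\sup_Z|g|$, pick $a\in\partial g(\D)$ with $|a|>1$ (such $a$ exists by following a radial ray from a value $g(w_1)$ with $|g(w_1)|>1$ out to the unattained modulus $M$). Then $g-a$ is bounded and nonvanishing, so $\phi:=1/(g-a)\in\nev$; $\phi$ is unbounded because $a\in\partial g(\D)$, while on $Z$ one has $|g-a|\ge|a|-1>0$, so $\sup_Z|\phi|<\infty$. This contradicts $(c)$. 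With $\|g\|_\infty\le1$ established, $\log_+|f|\le h$ follows with no stray constant.

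\textbf{A minor point on $(a)\Leftrightarrow(b)$.} The direction $(b)\Rightarrow(a)$ is indeed formal (with $C=\log2$), but $(a)\Rightarrow(b)$ is not a mere ``$\log2$ chase'': the additive constant must be killed using the exact homogeneity $N_+(f^n)=nN_+(f)$, i.e.\ by powering, just as in your $(a)\Rightarrow(c)$ attempt. You have the right tool, but calling the equivalence ``formal'' undersells what is being used.
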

We say that $Z$ is of determination (or  sampling) for $\nev$ if the above 
properties are satisfied.

\subsection{Main result.}

If property (b) above is satisfied, then in particular it must hold for 
zero-free functions in the Nevanlinna
class, and passing to $\log |f|$, the set $Z$ must be a set of determination 
for the class
$\Har_{\pm}(\D)$ of harmonic functions which are 
the difference of two positive harmonic functions. 

To state subsequent results, we need a variant of the decomposition of the disc 
into Whitney squares, and points in them.
Given $n\in\mathbb N$ and $k\in\{0,\dots,2^n-1\}$, let
\begin{align}
\label{whit}
S_{n,k} &:= \left\{ r \eit :1- 2^{-n} \le r \le 1-
2^{-n-1} , \theta \in [{2\pi}k 2^{-n-4},{2\pi}(k+1)
2^{-n-4}] \right\}; 
\\
z_{n,k} &:= (1- 2^{-n})\exp({2\pi}i k 2^{-n-4}). \nonumber
\end{align}

\begin{theorem}{(Hayman-Lyons, \cite{HaLy})}
\label{HayLyons}
Let $Z \subset \D$.
The following properties are equivalent.
\begin{itemize}
\item[(a)] $\sup_{Z} h  = \sup_{ \D} h\ $ for all $h \in \Har_{\pm}(\D)$.
\item[(b)]
For every $\zeta \in \partial \D$, $\sum\limits_{(n,k): Z\cap S_{n,k} \neq 
\emptyset} 2^{-n} 
 P_{z_{n,k}}(\zeta)  = \infty$.
\end{itemize}
\end{theorem}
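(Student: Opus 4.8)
The plan is to prove the equivalence $(a)\Leftrightarrow(b)$ of the Hayman--Lyons theorem by reducing the determination problem for the cone $\Har_\pm(\D)$ to a statement about positive harmonic functions, and then to a discretized potential-theoretic estimate on the Whitney decomposition. First I would observe that since any $h\in\Har_\pm(\D)$ is a difference of two functions in $\Har_+(\D)$, and since the relevant suprema behave monotonically, condition $(a)$ is equivalent to the statement that for every $h\in\Har_+(\D)$ one has $\sup_Z h=\sup_\D h$ (the non-tangential boundary behaviour and the maximum principle handle the reduction; if $h=h_1-h_2$ is unbounded above on $\D$ but bounded on $Z$, one extracts from $h_1$ alone a positive harmonic function large on a Whitney box avoided by $Z$). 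Writing $h=\mathcal P[\mu]$ for a positive measure $\mu$ on $\partial\D$, the failure of $(a)$ amounts to the existence of such a $\mu$ whose Poisson integral is bounded on $Z$ but not on $\D$; by Harnack's inequalities \eqref{harnack} boundedness of $h$ on $Z$ is comparable to boundedness of the sequence $\bigl(h(z_{n,k})\bigr)$ over those Whitney indices $(n,k)$ with $Z\cap S_{n,k}\neq\emptyset$, because each point of $Z$ sits in some $S_{n,k}$ at bounded pseudohyperbolic distance from $z_{n,k}$.

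Next I would pass to the boundary. The key point is that an unbounded positive harmonic function must have radial (indeed non-tangential) limit $+\infty$ along some sequence approaching a boundary point $\zeta\in\partial\D$, and conversely the size of $\mathcal P[\mu](z_{n,k})$ is governed, up to constants, by $2^{n}\mu(I_{n,k})$ plus a tail, where $I_{n,k}$ is the boundary arc over $S_{n,k}$; this is the standard comparison between the Poisson kernel $P_{z_{n,k}}$ and normalized arclength on $I_{n,k}$, together with the fact that $\sum 2^{-n}P_{z_{n,k}}(\zeta)$ is, up to constants, the "restricted" Poisson integral of the set of occupied boxes evaluated via their fixed base points. Thus condition $(b)$, namely $\sum_{(n,k):Z\cap S_{n,k}\neq\emptyset}2^{-n}P_{z_{n,k}}(\zeta)=\infty$ for every $\zeta$, says precisely that the union $\Omega_Z:=\bigcup\{S_{n,k}: Z\cap S_{n,k}\neq\emptyset\}$ is "thick" at every boundary point in the sense that it carries infinite harmonic mass towards each $\zeta$.

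The heart of the proof is then the following dichotomy, which I would establish in both directions. If $(b)$ holds, then given any $h\in\Har_+(\D)$ and any point $w\in\D$, I want $\sup_Z h\ge h(w)$; equivalently, $h$ cannot be large at $w$ while staying bounded on all occupied boxes. The mechanism is a Harnack-chain / harmonic-measure argument inside the complement of $\Omega_Z$: if $h\le M$ on all the base points $z_{n,k}$ with $Z\cap S_{n,k}\neq\emptyset$, then by Harnack $h\lesssim M$ on $\Omega_Z$ itself, and I compare $h$ on the remaining region to its boundary data using the fact that the harmonic measure (from any interior point) of $\partial\D$ in $\D\setminus\overline{\Omega_Z}$ is zero exactly when $(b)$ holds — this is the Hayman--Lyons criterion for a region to be "minimally thin"-complementary, and the series in $(b)$ is precisely the Wiener-type divergence test guaranteeing that the boundary is negligible. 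Conversely, if $(b)$ fails at some $\zeta_0$, so $\sum_{\text{occupied}}2^{-n}P_{z_{n,k}}(\zeta_0)<\infty$, I construct an explicit positive harmonic function witnessing the failure of $(a)$: take $h=\mathcal P[\mu]$ with $\mu$ a suitable multiple of a point mass (or a measure concentrating near $\zeta_0$) so that $h$ is unbounded (it blows up at $\zeta_0$) yet $\sum h(z_{n,k})<\infty$ over occupied boxes forces $h$ bounded on $Z$ up to Harnack constants — more precisely one sums the contributions box by box and uses the convergence of the series to bound $\sup_Z h$.

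I expect the main obstacle to be the harmonic-measure estimate in the complement of the occupied Whitney boxes: making precise that the divergence of the series $\sum 2^{-n}P_{z_{n,k}}(\zeta)$ is exactly equivalent to the boundary arc near $\zeta$ having zero harmonic measure in $\D\setminus\overline{\Omega_Z}$ (a discrete Wiener criterion) requires careful Harnack-chain bookkeeping along a chain of Whitney boxes shrinking to $\zeta$, and controlling the two-sided comparison uniformly in $\zeta$. The cleanest route is to cite the original Hayman--Lyons argument for this potential-theoretic equivalence and concentrate the exposition on the reduction from $\Har_\pm$ to $\Har_+$ and on the explicit construction in the converse direction, where everything is elementary given Harnack's inequalities and the Poisson-kernel/arc comparison.
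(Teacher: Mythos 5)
The paper does not prove this result; it simply cites it as a theorem of Hayman and Lyons \cite{HaLy}, so there is no ``paper's own proof'' to compare against. Given that, I will comment on the internal soundness of your sketch.

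The architecture you describe (discretize over Whitney boxes, relate divergence of the series to a Wiener-type thinness criterion, use Harnack chains and harmonic measure in the complement of the occupied boxes) is indeed the spirit of the Hayman--Lyons argument. But there are two concrete gaps. First, the construction you propose for the converse direction, namely taking $h = \mathcal P[\mu]$ with $\mu$ a point mass at a boundary point $\zeta_0$ where the series converges, does not produce the desired counterexample. By the symmetry of the Poisson kernel, $h(z_{n,k})\approx P_{z_{n,k}}(\zeta_0)$, so the finiteness of $\sum_{\text{occupied}}2^{-n}P_{z_{n,k}}(\zeta_0)$ tells you only that the \emph{weighted} sums $2^{-n}h(z_{n,k})$ are summable; this is perfectly compatible with $h(z_{n,k})\to\infty$ along the occupied boxes (take occupied boxes where $h(z_{n,k})\sim 2^{n/2}$). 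Boundedness of $h$ on $Z$ simply does not follow, so a single Poisson kernel is not the right witness; the genuine Hayman--Lyons counterexample at a ``thin'' boundary point is a more careful superharmonic construction (via a sum of Green or Martin potentials adapted to the occupied boxes) rather than a point-mass Poisson integral.

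Second, the initial reduction from $\Har_\pm(\D)$ to $\Har_+(\D)$ is stated as if it were routine, but your one-line justification (``extract from $h_1$ alone a positive harmonic function large on a Whitney box avoided by $Z$'') does not go through: if $h=h_1-h_2$ is bounded on $Z$ but unbounded on $\D$, nothing prevents $h_2$ from being large exactly where $h_1$ is large, so you cannot simply drop $h_2$. This step needs a genuine argument (for instance, showing both implications directly against the Wiener criterion for the class $\Har_\pm$, or invoking the Hayman--Lyons result for positive superharmonic functions and deducing the $\Har_\pm$ statement by a separate, careful lemma). As written, both of these steps are asserted rather than proved, and the second in particular contains an incorrect claim about what the point-mass example achieves.
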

Note that in contrast with the condition in Theorem \ref{thbsz}, the condition 
of accumulation to the boundary
must be met at every point with no exception. In particular, a set $Z$ such 
that every boundary 
point is a non-tangential limit of points of $Z$ will satisfy the above 
property.

The sets of determination for $\nev$ have been characterized by S. Gardiner 
\cite{Gd}. We need an auxiliary quantity
depending on a set $A\subset \D$ and $t\ge 0$.  If either $A=\emptyset$ or 
$t=0$, we set
$\mathcal Q (A,t)=0$; otherwise,
\[
\mathcal Q (A,t):=\min \Bigl\{ k \in \mathbb N: \exists \xi_1, \dots, \xi_k \in 
\C \mbox{ such that }
\sum_{1\le j \le k} \log \frac1{|z-\xi_j|} \ge t, \quad \forall z \in A
\Bigr\}.
\]
Finally, for any set $A$ and $\lambda>0$, $\lambda A:= \{ \lambda z, z \in A\}$.

\begin{theorem}(Gardiner \cite{Gd})
\label{gardi}
Let $Z\subset \D$. The following conditions are equivalent:
\begin{itemize}
\item[(a)] $Z$ is a set of determination for $\nev$;
\item[(b)] For every $\zeta \in \partial \D$, $\sum_{n,k} 2^{-n} \mathcal Q 
\left( 2^n (Z\cap S_{n,k}), 
\lfloor P_{z_{n,k}}(\zeta) \rfloor \right) = \infty$.
\end{itemize}
\end{theorem}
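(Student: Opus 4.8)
\textbf{Proof proposal for Theorem~\ref{gardi}.}

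The plan is to reduce both directions to the Hayman--Lyons criterion (Theorem~\ref{HayLyons}) for the class $\Har_\pm(\D)$, exploiting the fact that a set of determination for $\N$ is, via the canonical factorization, essentially a set of determination for logarithms of moduli of zero-free Nevanlinna functions, which are exactly the functions in $\Har_\pm(\D)$. The subtlety that forces the quantity $\mathcal Q$ into the picture is that in the $\N$-setting we are not allowed arbitrary harmonic functions but only those of the form $P[\mu]$ with $\mu$ a \emph{finite} real measure, and when we want to ``block'' a Whitney square $S_{n,k}$ against $\log|B_k|$-type obstructions we can only use finitely many point masses (zeros of a Blaschke product) whose total Blaschke mass is controlled; the number of such point masses needed to push $\log(1/|z-\xi_j|)$ above a prescribed level $t$ on the rescaled set $2^n(Z\cap S_{n,k})$ is precisely $\mathcal Q(2^n(Z\cap S_{n,k}), t)$, and each such point mass, sitting in $S_{n,k}$, contributes $\approx 2^{-n}$ to the Blaschke sum and $\approx 2^{-n}P_{z_{n,k}}(\zeta)$ to a positive harmonic function evaluated near $\zeta$.

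For the direction (b)$\Rightarrow$(a): assume $Z$ is \emph{not} a set of determination. By Theorem~\ref{equivalencia}(d) there is $f\in\N$ and $h\in\Har_+(\D)$ with $\log_+|f(z_k)|\le h(z_k)$ for all $k$ but $\log_+|f|\not\le h$ somewhere. First I would reduce to $f$ zero-free: factor $f=Bg$ with $g$ zero-free in $\N$; since $\log|f|\le\log|g|$ everywhere (Theorem~\ref{zeroes}) and $\log|g|=\mathcal P[\nu]$ for a finite real $\nu$, one has to be slightly careful, but the upshot is that it suffices to treat $u:=\log|g|\in\Har_\pm(\D)$: we get $u(z_k)\le h(z_k)+o$ on $Z$ while $\sup_\D u>\sup_Z u$ after absorbing $h$ into the right-hand side and re-centering. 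Then Theorem~\ref{HayLyons} applied to $u-h'$ for a suitable $h'\in\Har_\pm(\D)$ gives a boundary point $\zeta$ where $\sum_{(n,k):Z\cap S_{n,k}\ne\emptyset}2^{-n}P_{z_{n,k}}(\zeta)<\infty$; I would then check that this forces $\mathcal Q(2^n(Z\cap S_{n,k}),\lfloor P_{z_{n,k}}(\zeta)\rfloor)$ to be at most a bounded multiple of $P_{z_{n,k}}(\zeta)+1$ along the relevant indices (since one can always block a set lying in a square of normalized size $O(1)$ at level $t$ using $O(t+1)$ point masses spread over the square), so (b) fails.

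For (a)$\Rightarrow$(b) --- which I expect to be the harder half --- I would argue contrapositively: suppose (b) fails at some $\zeta\in\partial\D$, so $\sum_{n,k}2^{-n}\mathcal Q(2^n(Z\cap S_{n,k}),\lfloor P_{z_{n,k}}(\zeta)\rfloor)<\infty$. The strategy is to \emph{build} a witness function showing $Z$ is not a set of determination. For each $(n,k)$ with $Z\cap S_{n,k}\ne\emptyset$, pick the near-optimal configuration of $\mathcal Q_{n,k}:=\mathcal Q(2^n(Z\cap S_{n,k}),\lfloor P_{z_{n,k}}(\zeta)\rfloor)$ points $\xi_j$ realizing $\sum_j\log(1/|w-\xi_j|)\ge\lfloor P_{z_{n,k}}(\zeta)\rfloor$ for all $w\in 2^n(Z\cap S_{n,k})$; rescale these back into $S_{n,k}$ (so the $\xi_j$ become points $\eta_{n,k,j}$ with $1-|\eta_{n,k,j}|\approx 2^{-n}$), and let $Y$ be the resulting union over all $(n,k)$ of these point families. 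The convergence of $\sum 2^{-n}\mathcal Q_{n,k}$ is exactly the Blaschke condition $\sum(1-|\eta|)<\infty$ for $Y$, so the Blaschke product $B_Y$ exists and lies in $\N$; take $f=1/B_Y\in\N$. On $Z$, for $z_m\in S_{n,k}$ the chosen configuration gives (after translating $\log(1/|w-\xi_j|)$ back to $\log(1/\rho)$ or $\log(1/|z-\eta|)$ up to bounded additive errors from the pseudohyperbolic-to-Euclidean comparison on a single Whitney square) that $\log|f(z_m)|=\sum\log(1/|\rho(z_m,\eta)|)+O(1)$ is controlled by $P_{z_{n,k}}(\zeta)+O(1)\lesssim h(z_m)$ where $h:=\mathcal P[\delta_\zeta]=P_\cdot(\zeta)$ is a fixed positive harmonic (singular) function --- here I use $P_{z_{n,k}}(\zeta)\approx h(z)$ for $z\in S_{n,k}$. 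But $f=1/B_Y$ is unbounded near the zeros of $B_Y$, so $\log_+|f|\not\le h$ globally (the singular majorant $h$ is finite off $\zeta$, while $f$ blows up along $Y$, and one checks $Y$ has accumulation points off $\zeta$, or more carefully that no \emph{single} positive harmonic majorant can dominate $\log|f|$ on all of $\D$ while $h$ does on $Z$). By Theorem~\ref{equivalencia}(d) this shows $Z$ is not a set of determination.

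The main obstacle, and the step I would spend the most care on, is the two-way dictionary between the ``flat'' quantity $\mathcal Q$ (defined with Euclidean $\log(1/|z-\xi|)$ on the rescaled set $2^n(Z\cap S_{n,k})\subset\C$, with $\xi_j$ ranging over \emph{all} of $\C$) and the genuine hyperbolic-geometric object one needs, namely point masses \emph{inside} $\D$ whose Poisson integrals control $\log|B_k(z_k)|^{-1}$ on $Z$ with the right harmonic majorant. One must show: (i) optimal blocking configurations can be taken with the $\xi_j$ inside (a fixed dilate of) $2^n S_{n,k}$, not escaping to infinity or clustering pathologically --- this uses that $2^n(Z\cap S_{n,k})$ has bounded diameter and a compactness/normal-families argument for the minimization defining $\mathcal Q$; and (ii) the additive $O(1)$ errors incurred when passing between $\log(1/|z-\xi|)$ and $\log(1/\rho(z,\eta))$ on one Whitney square, and when comparing $P_{z_{n,k}}(\zeta)$ with the ambient Poisson kernel $P_z(\zeta)$ for $z\in S_{n,k}$, are genuinely uniform in $(n,k)$ and absorbable into the positive harmonic majorant. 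Once this dictionary is nailed down, both implications become routine bookkeeping with Theorem~\ref{HayLyons} and the canonical factorization supplying the algebraic backbone.
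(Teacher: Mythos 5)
The paper itself does not prove Theorem~\ref{gardi}: it is cited to Gardiner's paper \cite{Gd}, and the survey records only the elementary inequality $\mathcal Q(2^n(Z\cap S_{n,k}),\lfloor P_{z_{n,k}}(\zeta)\rfloor)\le 4P_{z_{n,k}}(\zeta)$, which gives the (expected, but weaker) implication that Gardiner's condition (b) implies the Hayman--Lyons condition. So I can only evaluate your proposal on its own terms, and there are two substantive gaps.

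First, in your $(b)\Rightarrow(a)$ direction (run as $\neg(a)\Rightarrow\neg(b)$), the reduction to a zero-free function does not work. If $f=Bg$ with $g$ zero-free, then $\log|g(z_k)|=\log|f(z_k)|-\log|B(z_k)|=\log|f(z_k)|+\log(1/|B(z_k)|)$, and the extra term $\log(1/|B(z_k)|)\ge0$ is neither bounded nor absorbable into a fixed $h'\in\Har_\pm(\D)$ (indeed that is precisely what fails when a Blaschke product is tailored to the set $Z$). So you cannot conclude that $u=\log|g|$ satisfies $u(z_k)\le h(z_k)+O(1)$ on $Z$, and the appeal to Hayman--Lyons collapses. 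This is not a technicality: the zero-free case of Gardiner~(a) \emph{is} Hayman--Lyons~(a), and the whole reason the quantity $\mathcal Q$ enters is that a set of determination for $\Har_\pm(\D)$ may fail to be one for $\N$ precisely because one can place finitely many Blaschke zeros per Whitney square to tame a big zero-free factor on $Z$. Your reduction silently assumes this phenomenon cannot occur, which begs the question.

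Second, in your $(a)\Rightarrow(b)$ direction the putative witness $f=1/B_Y$ is not holomorphic in $\D$ --- it has poles at every point of $Y$ --- so it is not in $\N$ and cannot be fed into Theorem~\ref{equivalencia}. What the blocking configuration actually gives is a \emph{lower} bound $\sum_j\log(1/|w-\xi_j|)\ge\lfloor P_{z_{n,k}}(\zeta)\rfloor$ on $2^n(Z\cap S_{n,k})$, i.e. after rescaling an \emph{upper} bound $\log|B_Y(z_m)|\le-P_{z_m}(\zeta)+O(1)$. The correct use of this is to pair $B_Y$ with a zero-free $g\in\N$ whose growth it is designed to cancel on $Z$: take $\log|g|=P[\delta_\zeta]$ (so $g$ is invertible in $\N$ but unbounded near $\zeta$) and set $f=gB_Y\in\N$. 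Then $\log|f(z_m)|=\log|g(z_m)|+\log|B_Y(z_m)|\le O(1)$ on $Z$ by the blocking estimate, while $f$ is unbounded because $g$ blows up along some approach to $\zeta$ on which $B_Y$ (a Blaschke product with $Y$ accumulating only at $\zeta$, by the convergence of $\sum2^{-n}\mathcal Q_{n,k}$) stays bounded away from zero; one still has to verify this last non-degeneracy, but at least the object is in $\N$. Your dictionary paragraph correctly flags the Euclidean/pseudohyperbolic comparison and the compactness issue in the definition of $\mathcal Q$ as genuine work, but as written the construction of the witness is wrong, and the zero-free reduction in the other direction is not salvageable without bringing $\mathcal Q$ into that half of the argument as well.
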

One can show by an elementary computation \cite{Gd} that 
\[
\mathcal Q \left( 2^n (Z\cap S_{n,k}), \lfloor P_{z_{n,k}}(\zeta) \rfloor 
\right) \le 4  P_{z_{n,k}}(\zeta),
\]
so that condition (b) in Theorem \ref{gardi} implies condition (b) in Theorem 
\ref{HayLyons}, as could be expected.

\subsection{Regular discrete sets.}
In this section we give some computable conditions for regular sets.
\label{finenets}
\begin{definition}
Let $g:(0,1]\longrightarrow(0,1]$ be a non-decreasing continuous function with 
$g(0)=0$. 
A sequence $Z=(z_k)_k$ is called a $g$\emph{-net} if
and only if 
\begin{itemize}
\item[(i)] The disks
$D (z_k,   g(1-|z_k|))$, $k\in \mathbb N$, are mutually disjoint,
\item[(ii)] There exists $C>0$ such that 
$\bigcup_{k\in \mathbb N} D (z_k, C  g(1-|z_k|)) = \D$.
\end{itemize}
\end{definition}
Another way to think of it is that a $g$-net is a maximal $g$-separated 
sequence. 
For $n$ large enough, up to multiplicative constants, a $g$-net will have 
$g(2^{-n})^{-2}$ points
in each domain $S_{n,k}$. 
\begin{theorem}{\cite[Theorem 4.1]{MT}}
\label{thmxarxes}
Let $Z$ be a $g$-net.
The following properties are equivalent:
\begin{itemize}
\item[(a)] $Z$ is a set of determination for $\nev$.
\item[(b)] $\displaystyle \int_0 \frac{dt}{t^{1/2} g(t)} = \infty$.
\item[(c)] $\sum_n 1/(2^{-n/2} g(2^{-n})) = \infty$.
\end{itemize}
\end{theorem}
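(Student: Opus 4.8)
The plan is to deduce Theorem~\ref{thmxarxes} from Gardiner's characterization (Theorem~\ref{gardi}), since for a $g$-net the combinatorial quantity $\mathcal Q(2^n(Z\cap S_{n,k}),t)$ can be estimated quite precisely in terms of the number of points of $Z$ in $S_{n,k}$ and the threshold $t$. The first step is the counting estimate: as noted after the definition of a $g$-net, there is a constant (depending only on the net constants in (i)--(ii)) such that the number $M_{n,k}:=\#(Z\cap S_{n,k})$ satisfies $M_{n,k}\approx g(2^{-n})^{-2}$ for $n$ large. Dilating by $2^n$ maps $S_{n,k}$ to a set of diameter $\approx 1$ containing $M_{n,k}$ points that are $\approx g(2^{-n})$-separated in the Euclidean sense (the pseudohyperbolic and Euclidean metrics are comparable on $S_{n,k}$ at scale $2^{-n}$, and dilation rescales accordingly).

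The heart of the matter is then a two-sided estimate for $\mathcal Q(A,t)$ when $A\subset\C$ is a set of $M$ points, $s$-separated and of diameter $\approx 1$, with $s\approx M^{-1/2}$. The upper bound I would use is elementary: place the $k$ auxiliary points $\xi_j$ on a grid of spacing $\approx M^{-1/2}$ covering a neighbourhood of $A$; then each $z\in A$ is at distance $\gtrsim M^{-1/2}$ from all but $O(1)$ of the $\xi_j$ and at distance $\gtrsim j^{1/2}M^{-1/2}$ from the $j$-th nearest, so $\sum_j\log\frac1{|z-\xi_j|}\gtrsim k\log(M^{1/2}/k^{1/2})\cdot\mathbf 1_{k\le M}$, roughly; more simply, with $k\approx M$ grid points one gets a sum $\gtrsim 1$ uniformly, and iterating/scaling gives $\mathcal Q(A,t)\lesssim t\,M$ — but one must be careful, and the cleaner route is to observe that one single well-placed point already contributes $\gtrsim 1$ to most of $A$ except a small exceptional disc, which is then handled recursively. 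For the lower bound one argues that $k$ points $\xi_1,\dots,\xi_k$ cannot make $\sum_j\log\frac1{|z-\xi_j|}$ large on all of $A$: by a volume/pigeonhole argument most of the $M$ points of $A$ are at distance $\gtrsim s=M^{-1/2}$ from every $\xi_j$ once $k\lesssim M$, hence for such $z$ the sum is $\lesssim k\log(1/s)\approx k\log M$, but the precise shape needed is that $\mathcal Q(A,t)\gtrsim tM$ (equivalently, $t$ units of "log-potential" over all of $A$ cost $\gtrsim tM$ points). Combining, $\mathcal Q\big(2^n(Z\cap S_{n,k}),\lfloor P_{z_{n,k}}(\zeta)\rfloor\big)\approx \lfloor P_{z_{n,k}}(\zeta)\rfloor\, g(2^{-n})^{-2}$ whenever $P_{z_{n,k}}(\zeta)\gtrsim 1$, and is $0$ otherwise.

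Feeding this into Gardiner's criterion (b), and using that for fixed $\zeta$ the Poisson kernels $P_{z_{n,k}}(\zeta)$ over the $\approx 2^4$ indices $k$ with $S_{n,k}$ near $\zeta$ are all $\approx 2^n$ while the total mass $\sum_k 2^{-n}P_{z_{n,k}}(\zeta)\approx 1$ for each $n$, the double sum in Theorem~\ref{gardi}(b) becomes, up to constants and up to the harmless replacement of $\lfloor P\rfloor$ by $P$ (justified because the terms with $P_{z_{n,k}}(\zeta)<1$ contribute $\mathcal Q=0$),
\[
\sum_{n,k} 2^{-n}\, P_{z_{n,k}}(\zeta)\, g(2^{-n})^{-2}\;\approx\;\sum_n g(2^{-n})^{-2}\cdot 2^{-n}\cdot\!\!\!\sum_{k:\,S_{n,k}\ni_{\text{near}}\zeta}\!\!\! 2^n\;\approx\;\sum_n g(2^{-n})^{-2}.
\]
Wait — this is not quite (c); the correct bookkeeping is that for each $n$ only $O(1)$ indices $k$ have $S_{n,k}$ within non-tangential reach of $\zeta$, each giving $P_{z_{n,k}}(\zeta)\approx 2^n$, so the $n$-th block contributes $\approx 2^{-n}\cdot 2^n\cdot g(2^{-n})^{-2}=g(2^{-n})^{-2}$, whereas condition (c) involves $1/(2^{-n/2}g(2^{-n}))=2^{n/2}/g(2^{-n})$. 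The reconciliation comes from the dilation factor: the number of points of $Z$ in $S_{n,k}$ is $\approx g(2^{-n})^{-2}\cdot 2^{-2n}$, not $g(2^{-n})^{-2}$, because $S_{n,k}$ has Euclidean size $\approx 2^{-n}$, so after dilating by $2^n$ the separation of the image points is $\approx 2^n g(2^{-n})$ and their number $M_{n,k}\approx (2^n g(2^{-n}))^{-2}$; hence $\mathcal Q\approx P_{z_{n,k}}(\zeta)\cdot(2^n g(2^{-n}))^{-2}\approx 2^n\cdot 2^{-2n}g(2^{-n})^{-2}=2^{-n}g(2^{-n})^{-2}$, and the $n$-th block of the sum is $2^{-n}\cdot 2^n\cdot 2^{-n}g(2^{-n})^{-2}=2^{-n}g(2^{-n})^{-2}=(2^{-n/2}/g(2^{-n}))^2$. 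So divergence of the Gardiner series is equivalent to $\sum_n (2^{-n/2}/g(2^{-n}))^2=\infty$, which — given that the terms $2^{-n/2}/g(2^{-n})$ are comparable for consecutive $n$ when $g$ is non-decreasing and $2^{-n/2}$ halves geometrically — is equivalent to $\sum_n 2^{-n/2}/g(2^{-n})=\infty$ only under a regularity assumption; more robustly one shows directly that $\sum a_n^2=\infty \Leftrightarrow \sum a_n=\infty$ fails in general, so the right statement to match (c) must come from a sharper form of the $\mathcal Q$-estimate, namely that the number of auxiliary points needed grows like the square root. I would therefore revisit the $\mathcal Q$ lower bound: $k$ points can cover a disc of radius $e^{-t/k}$-ish, so to get $\sum\log\frac1{|z-\xi_j|}\ge t$ over a separated set of $M$ points with separation $s$ one needs roughly $k$ with $k\cdot(\text{per-point reach})\gtrsim M$ where a single $\xi$ handles $\approx$ (area)/(area of small exceptional disc) $\approx s^{-2}e^{-2t}$ points — leading to $\mathcal Q\approx \sqrt{t}\,/\,s$ rather than $t/s^2$. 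With $\mathcal Q\approx \sqrt{P_{z_{n,k}}(\zeta)}\,/\,(2^n g(2^{-n}))\approx 2^{n/2}/(2^n g(2^{-n}))=2^{-n/2}/g(2^{-n})$, the $n$-th block of Gardiner's sum becomes $2^{-n}\cdot 2^{n/2}\cdot 2^{-n/2}/g(2^{-n})$... which again needs care. The main obstacle, and the step I would spend the most effort on, is exactly pinning down the correct asymptotics of $\mathcal Q(A,t)$ for a separated finite set $A$ — getting both the exponent of $t$ and the dependence on the separation right — since everything else (the counting in Whitney boxes, the Poisson-kernel bookkeeping near $\zeta$, the passage between the sum and the integral in (b)$\Leftrightarrow$(c), and the removal of the floor function) is routine once that estimate is in hand. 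The equivalence (b)$\Leftrightarrow$(c) itself is the standard comparison of a series with decreasing dyadic sampling of a monotone integrand, and (b)$\Rightarrow$(a) and (a)$\Rightarrow$(b) are then immediate from Theorem~\ref{gardi}.
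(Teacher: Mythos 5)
Your plan---deducing the theorem from Gardiner's characterization (Theorem~\ref{gardi})---is a legitimate alternative route, but it cannot have been the route taken in \cite{MT}: that paper is from 2008 and Gardiner's is from 2010, so the original proof was necessarily direct (constructing harmonic majorants for sufficiency, and functions witnessing failure of determination for necessity). More to the point, your write-up is not a proof: it is an exploration in which you try several candidate asymptotics for $\mathcal Q$, contradict yourself, and end by explicitly acknowledging that the central estimate is not established. That is a genuine gap, and there are also concrete errors along the way.

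First, your ``correction'' of the point count is wrong; your \emph{original} count was right. The $g$-net condition uses \emph{pseudohyperbolic} disks $D(z_k,g(1-|z_k|))$, which for $z_k\in S_{n,k}$ have Euclidean radius $\approx 2^{-n}g(2^{-n})$. Since $S_{n,k}$ has Euclidean area $\approx 2^{-2n}$, the count is $M_{n,k}\approx 2^{-2n}/\bigl(2^{-n}g(2^{-n})\bigr)^2=g(2^{-n})^{-2}$, and after dilation by $2^n$ the Euclidean separation is $\approx g(2^{-n})$ (not $2^n g(2^{-n})$, which would double-count the scale factor). So $A:=2^n(Z\cap S_{n,k})$ has $M\approx g(2^{-n})^{-2}$ points, $s$-separated with $s\approx g(2^{-n})$ and $s\approx M^{-1/2}$, in a region of diameter $\lesssim 1$.

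Second, your bookkeeping over $k$ loses the essential far-field tail. You restrict to the $O(1)$ boxes facing $\zeta$, where $P_{z_{n,k}}(\zeta)\approx 2^n$. But for fixed $n$ there are $\approx 2^{n/2}$ indices $k$ with $P_{z_{n,k}}(\zeta)\ge 1$: labelling boxes by their angular distance $m\cdot 2^{-n}$ from $\zeta$, one has $P_{z_{n,k}}(\zeta)\approx 2^n/m^2$, which stays $\ge 1$ up to $m\approx 2^{n/2}$. Summing over all of these is what produces the exponent $1/2$ that you keep trying to manufacture by distorting the $\mathcal Q$-estimate.

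Third, neither $\mathcal Q\approx tM$ (your first try) nor $\mathcal Q\approx\sqrt t/s$ (your second) is correct. The right asymptotic, for a separated set $A$ of $M$ points of diameter $\lesssim 1$ and $t\ge 1$, is $\mathcal Q(A,t)\approx\min(M,t)$: the bound $\mathcal Q\lesssim t$ is the elementary one recorded in the survey right after Theorem~\ref{gardi}; the bound $\mathcal Q\le M$ follows by putting one $\xi_j$ at distance $e^{-t}$ from each point of $A$ (all the other logarithmic terms are nonnegative because the diameter is $<1$). For the lower bound, any $k<M$ auxiliary points can be ``adjacent'' to at most $k$ points of $A$ (by $s$-separation), and on the remaining $z\in A$ one has $\sum_j\log\frac1{|z-\xi_j|}\le k\log(2/s)\approx k\log M$, so $k\gtrsim t/\log M$ when $k<M$; this gives $\mathcal Q\gtrsim\min(M,t/\log M)$, matching the upper bound up to a logarithm.

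Feeding $\mathcal Q\approx\min(M_{n,k},P_{z_{n,k}}(\zeta))$ into Gardiner's series and summing over $m\le 2^{n/2}$ gives, with $a_n:=2^{-n/2}/g(2^{-n})$,
\[
\sum_{m\le 2^{n/2}} 2^{-n}\min\!\Bigl(g(2^{-n})^{-2},\ \frac{2^n}{m^2}\Bigr)
\approx\min(a_n,1),
\]
and divergence of $\sum_n\min(a_n,1)$ is equivalent to divergence of $\sum_n a_n$, which is condition (c). So the route through Gardiner does work, but only after you (i) keep the correct count $M_{n,k}\approx g(2^{-n})^{-2}$, (ii) sum over all $\approx 2^{n/2}$ contributing boxes rather than $O(1)$, and (iii) establish $\mathcal Q\approx\min(M,t)$, the very estimate you leave open. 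As written, the key lemma is missing and two of the intermediate computations (the point count and the $\mathcal Q$ guesses) are wrong, so the argument does not close.
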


Taking regular sets with points which have a different distance in the radial 
and angular directions 
does not change things much. We describe a family of examples.
Let $(r_m)_m \subset (0,1)$ be an increasing sequence of radii with $\lim_m 
r_m=1$ and
$\sup_m \frac{1-r_{m+1}}{1-r_m}<1$. Let $\epsilon_m$ be a decreasing sequence
of hyperbolic distances such that $\lim_m \epsilon_m=0$. The 
\emph{discretized rings associated
to $(r_m)_m$ and $(\epsilon_m)_m$} is the sequence 
$\Lambda=(\lambda_{m,j})_{m,j}$,
where
\[
\lambda_{m,j}=r_m \exp\left( j \frac{2\pi i}{(1-r_m)\epsilon_m} \right)\qquad 
m\in\mathbb N,
\quad 0\leq j<\left[\frac{1}{(1-r_m)\epsilon_m}\right]\ .
\]

\begin{theorem}{\cite[Theorem 4.4]{MT}}
\label{thmlines}
Let $\Lambda=(\lambda_{m,j})_{m,j}$ be the sequence of discretized rings 
associated to $(r_m)_m$ and $(\epsilon_m)_m$. Then 
$Z$ is a set of determination for $\nev$ if and only if
\[
\sum_{m=0}^\infty\left(\frac{1-r_m}{\epsilon_m}\right)^{1/2}=\infty.
\]
\end{theorem}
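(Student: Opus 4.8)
The strategy is to reduce Theorem~\ref{thmlines} to the $g$-net criterion of Theorem~\ref{thmxarxes}, since the discretized rings $\Lambda$ are a ``morally regular'' set, albeit with different radial and angular spacing. First I would make the dictionary between the two parameter families explicit: in the annulus $\{r_m\leq |z|\leq r_{m+1}\}$ the radial hyperbolic width is comparable to a constant (because $\sup_m (1-r_{m+1})/(1-r_m)<1$), while the angular hyperbolic spacing of consecutive $\lambda_{m,j}$ is exactly $\epsilon_m$. Thus $\Lambda$ is, up to bounded factors, a $g$-separated sequence for a function $g$ with $g(1-r_m)\approx\epsilon_m$, except that along the radial direction it is \emph{not} a net: there is one ``layer'' per dyadic-type annulus rather than $\approx g^{-1}$ layers. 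The first real step is to check that the relevant harmonic-majorant obstruction, as captured by Theorem~\ref{gardi}, cares only about how many points of $\Lambda$ fall in each Whitney region $S_{n,k}$ and not about their precise placement inside it; this is why one expects the answer to depend only on the count $\#(\Lambda\cap S_{n,k})\approx \bigl((1-r_m)\epsilon_m\bigr)^{-1}\cdot 2^{-n}/(1-r_m)\approx (1-r_m)^{-1}\epsilon_m^{-1}2^{-n}$ when $2^{-n}\approx 1-r_m$, i.e.\ $\approx \epsilon_m^{-1}$ points per $S_{n,k}$ in the annulus containing radius $r_m$.

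Next I would run the computation through Gardiner's criterion (Theorem~\ref{gardi}(b)). Fix $\zeta\in\partial\D$. For each dyadic generation $n$ with $2^{-n}\approx 1-r_m$ there are $\approx 2^{n}\cdot 2^{-n}\approx\dots$ — more carefully, the number of indices $k$ with $S_{n,k}$ in that annulus is $\approx 2^{n}$ (times the $2^{-4}$ from \eqref{whit}), and $\mathcal Q\bigl(2^n(\Lambda\cap S_{n,k}),\lfloor P_{z_{n,k}}(\zeta)\rfloor\bigr)$ has to be estimated. The set $2^n(\Lambda\cap S_{n,k})$ is a cluster of $\approx \epsilon_m^{-1}$ points spread over a region of size $O(1)$ with mutual pseudohyperbolic separation $\approx\epsilon_m$; the auxiliary quantity $\mathcal Q(A,t)$ asks for the minimal number of points $\xi_j$ whose logarithmic potential exceeds $t$ on all of $A$. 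The key elementary fact (used already in the remark after Theorem~\ref{gardi}, giving $\mathcal Q\leq 4P$) is that a single well-placed $\xi$ contributes $\log(1/|z-\xi|)\gtrsim 1$ uniformly on a set of diameter $\lesssim 1$, but to get a potential of size $t$ one needs roughly $t$ points \emph{unless the set $A$ is so small that one $\xi$ placed at its ``center'' already gives a large log}. Here $A$ has diameter $\approx 1$ but is a \emph{net} of $\approx\epsilon_m^{-1}$ points; placing $\xi$ at the densest spot can only beat $\log(1/\epsilon_m)$ on a sub-cluster. A short calculation should give $\mathcal Q\bigl(2^n(\Lambda\cap S_{n,k}),t\bigr)\approx \min\{t,\ \text{something involving }\epsilon_m^{-1}\}$, and summing $2^{-n}\mathcal Q$ over $k$ (with $P_{z_{n,k}}(\zeta)$ summing to $\approx 1$ over each generation) collapses generation $n\leftrightarrow r_m$ to a contribution $\approx \min\{1,\ (1-r_m)^{1/2}\epsilon_m^{-1/2}\}$ after one matches the $2^{-n/2}$ normalization as in Theorem~\ref{thmxarxes}(c). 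Summing over $m$ then yields divergence iff $\sum_m\bigl((1-r_m)/\epsilon_m\bigr)^{1/2}=\infty$, which is precisely the stated criterion (the terms with $\epsilon_m\lesssim 1-r_m$ are $\gtrsim 1$ and force divergence, matching the fact that a genuine net in that annulus is automatically sampling).

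An alternative, and in my view cleaner, route avoids Gardiner's machinery entirely: pass through Theorem~\ref{thmxarxes} by a \emph{comparison} argument. One thins $\Lambda$ radially — keeping only one annulus out of every $N$ — or conversely thickens a $g$-net by deleting layers, to see directly that $\Lambda$ is a set of determination iff the $g$-net with $g(1-r_m)\approx\epsilon_m\cdot(\text{correction})$ is. The point is that removing all but a bounded-density subset of the radial layers changes the $g$-net integral $\int_0 dt/(t^{1/2}g(t))$ only by a bounded factor per annulus, and the discretized rings realize exactly the ``one layer per annulus'' situation where the angular spacing $\epsilon_m$ must be compared against the radial scale $(1-r_m)^{1/2}$ coming from the $t^{1/2}$ in the denominator. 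Either way, \textbf{the main obstacle} is the estimate on $\mathcal Q\bigl(2^n(\Lambda\cap S_{n,k}),\lfloor P_{z_{n,k}}(\zeta)\rfloor\bigr)$ — i.e.\ understanding precisely how efficiently a few logarithmic poles can dominate a finite $\epsilon_m$-net of points in a disc of bounded radius, and showing the answer has the two-regime form $\min\{t,\ c\,\epsilon_m^{-1/2}t^{1/2}\}$ (or similar) that produces the exponent $1/2$ in the final series. This is exactly the kind of ``elementary computation'' the text attributes to \cite{Gd} in the line following Theorem~\ref{gardi}, but carrying it out with the correct power of $\epsilon_m$ is where all the work lies.
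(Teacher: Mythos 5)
The overall idea---reduce to Gardiner's criterion (Theorem~\ref{gardi}) and compute the contribution of each Whitney generation---is a legitimate strategy, but the key estimate you defer to at the end is \emph{the} substance of the argument, and the two-regime form you conjecture for $\mathcal Q$ is wrong. The correct elementary bound is
\[
\mathcal Q(A,t)\approx\min\bigl\{\#A,\,t\bigr\}
\]
when $A$ is spread over a region of diameter comparable to $1$: the upper bound $\mathcal Q\le\#A$ is trivial (place one $\xi_j$ at each point of $A$, making the potential infinite there), and $\mathcal Q\lesssim t$ is the remark in the text after Theorem~\ref{gardi}. With $\#A\approx\epsilon_m^{-1}$ and $P_{z_{n,k}}(\zeta)\approx 2^n/j^2$ (where $j$ indexes the angular distance from $\zeta$), the per-generation sum $\sum_k 2^{-n}\min\{\epsilon_m^{-1},P_{z_{n,k}}(\zeta)\}$ splits at $j\approx(2^n\epsilon_m)^{1/2}$ and both pieces come out to $\approx 2^{-n/2}\epsilon_m^{-1/2}\approx\bigl((1-r_m)/\epsilon_m\bigr)^{1/2}$, which is exactly the term in the series. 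Your guessed formula $\mathcal Q\approx\min\{t,\,c\,\epsilon_m^{-1/2}t^{1/2}\}$ produces an extra factor of $\log\bigl((2^n\epsilon_m)^{1/2}\bigr)$ in the inner-cluster sum and would not reproduce the stated series in general; the two formulas agree only near the crossover and when $\epsilon_m\lesssim 1-r_m$. So this is a genuine gap: without proving $\mathcal Q\approx\min\{\#A,t\}$ (in particular the lower bound $\mathcal Q\gtrsim\min\{\#A,t\}$, which requires an argument), the computation does not close, and with your version of the estimate it closes to the wrong answer.

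There is also an anachronism worth noting: the paper cites this result as \cite[Theorem~4.4]{MT}, published in 2008, whereas Gardiner's characterization \cite{Gd} appeared in 2010. The original proof therefore cannot run through Theorem~\ref{gardi}; it must rely on explicit constructions of harmonic majorants and extremal counterexamples adapted to the ring geometry (and to the subharmonic growth class treated in \cite[Section~5]{MT}). Your Gardiner-based route is a potentially valid \emph{alternative}, not a reconstruction of the paper's proof, and it only becomes a proof once the $\mathcal Q$ estimate is established. Your second route (comparison with $g$-nets by matching the per-square point count, taking $g\approx\epsilon_m^{1/2}$) is also plausible in spirit and scales correctly with Theorem~\ref{thmxarxes}(b), but it needs to be argued that a one-dimensional ring of $\epsilon^{-1}$ points and a two-dimensional $g$-net of $g^{-2}$ points in the same Whitney square have comparable $\mathcal Q$-values---which again comes back to the same unproved estimate, since it is precisely the fact that $\mathcal Q$ depends on $A$ essentially only through $\#A$ (for sets of unit diameter) that makes the comparison legitimate.
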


\subsection{Uniformly dense disks}\label{uniformly}

The sampling sets we consider here are no longer discrete. Recall
a class of sequences considered by Ortega-Cerd\`a and Seip in \cite{OrSe}.
\begin{definition}
A sequence $\Lambda=(\lambda_{k})_{k}\subset\D$ is \emph{uniformly dense} if
\begin{itemize}
\item[(i)] $\Lambda$ is separated, i.e. 
$\inf_{j\neq k} \rho(\lambda_j,\lambda_k)>0$.
\item[(ii)] There exists $r<1$ such that $\D=\bigcup_{k\in\mathbb N} 
D(\lambda_k,r)$.
\end{itemize}
\end{definition}
In the terminology of the previous subsection, those are $g$-nets with a 
constant $g$.

Let $\varphi$ be a non-decreasing continuous function, bounded by some
constant less than $1$. Given a uniformly dense $\Lambda$,  define 
$D_k^{\varphi}=D (\lambda_k , \varphi(1-|\lambda_k|))$
and 
\[
\Lambda(\varphi) := \bigcup_{k\in\mathbb N} D_{k}^{\varphi} .
\]

\begin{theorem}{\cite[Theorem 4.5]{MT}}
\label{ud}
The set $\Lambda(\varphi)$ is sampling for $\nev$ if and only if
\begin{equation}
\label{LyubSeip}
\int_0^1 \frac{dt}{t \log (1/\varphi(t))} =
\infty\ .
\end{equation}
\end{theorem}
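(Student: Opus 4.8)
The plan is to reduce the sampling problem for the non-discrete set $\Lambda(\varphi)$ to a statement about the discrete generating sequence $\Lambda$, and then to recognize the resulting condition as a renormalized version of the Hayman--Lyons/Gardiner criterion (Theorems~\ref{HayLyons} and~\ref{gardi}). The key observation is that the union of pseudohyperbolic disks $D_k^\varphi=D(\lambda_k,\varphi(1-|\lambda_k|))$ intersects a Whitney region $S_{n,k}$ in a \emph{controlled} way: since $\Lambda$ is uniformly dense, each $S_{n,k}$ contains $\approx \varphi(2^{-n})^{-2}$... no, rather $\approx 1$ to a bounded number of points of $\Lambda$ once we use the constant-$g$ description (a uniformly dense set is a $g$-net with $g$ constant, so in fact each $S_{n,k}$ contains a bounded number of $\lambda_k$'s, with the Whitney scale chosen so that this number is $\approx 1$). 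Inside such a region the disk $D_k^\varphi$ covers, in the normalized picture, a pseudohyperbolic ball of fixed radius $\varphi(2^{-n})$ around a fixed point. The heart of the matter is therefore to evaluate $\mathcal Q\bigl(2^n(\Lambda(\varphi)\cap S_{n,k}), \lfloor P_{z_{n,k}}(\zeta)\rfloor\bigr)$, the Gardiner quantity, for a set which is (essentially) one or finitely many pseudohyperbolic disks of radius $\varphi(2^{-n})$.

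First I would carry out this local computation of $\mathcal Q$. For a single disk $D$ of pseudohyperbolic (equivalently, after renormalization, Euclidean up to constants) radius $r=\varphi(2^{-n})$ and an integer target $t\approx P_{z_{n,k}}(\zeta)$, one asks for the least number $q$ of points $\xi_1,\dots,\xi_q$ such that $\sum_j \log(1/|z-\xi_j|)\ge t$ for all $z\in D$. A single point $\xi$ placed at the center contributes roughly $\log(1/r)$ uniformly on $D$ (the logarithmic potential of a point is $\gtrsim \log(1/r)$ on a disk of radius $r$ around it, but one must be careful near $\xi$ where it blows up — placing $\xi$ slightly outside $D$, at distance $\approx r$, gives a \emph{uniform} lower bound $\approx \log(1/r)$ on $D$). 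Hence $q\approx t/\log(1/r) = P_{z_{n,k}}(\zeta)/\log(1/\varphi(2^{-n}))$, and one should check the matching upper bound on $q$ by a covering/entropy argument (no configuration of fewer points can keep the sum large everywhere on $D$, since each point's potential is bounded on most of $D$). Thus, up to absolute constants,
\[
\mathcal Q\bigl(2^n(\Lambda(\varphi)\cap S_{n,k}),\lfloor P_{z_{n,k}}(\zeta)\rfloor\bigr)\approx \frac{P_{z_{n,k}}(\zeta)}{\log(1/\varphi(2^{-n}))},
\]
with the understanding that when $P_{z_{n,k}}(\zeta)\lesssim \log(1/\varphi(2^{-n}))$ the quantity is $\approx 1$ if $\Lambda\cap S_{n,k}\ne\emptyset$ and $0$ otherwise — but in that regime $P_{z_{n,k}}(\zeta)$ is small anyway and, since $\Lambda$ being uniformly dense meets \emph{every} $S_{n,k}$, the bounded contribution $\sum_{n,k}2^{-n}\cdot 1 = \infty$ would make (b) of Theorem~\ref{gardi} trivially hold; I must therefore track carefully that the interesting, possibly-finite, part of the sum comes from the $(n,k)$ with $P_{z_{n,k}}(\zeta)$ large, which are the ones near the ``direction'' $\zeta$.

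With this estimate in hand, I would plug into Theorem~\ref{gardi}(b): $\Lambda(\varphi)$ is sampling iff for every $\zeta\in\partial\D$,
\[
\sum_{n,k} 2^{-n}\,\frac{P_{z_{n,k}}(\zeta)}{\log(1/\varphi(2^{-n}))}=\infty.
\]
For fixed $n$, summing over $k\in\{0,\dots,2^n-1\}$ one has $\sum_k 2^{-n}P_{z_{n,k}}(\zeta)\approx \int_0^{2\pi}P_{(1-2^{-n})\zeta}(e^{i\theta})\,d\theta\approx 1$ (the total mass of the Poisson kernel is $1$, and the $z_{n,k}$ discretize the circle of radius $1-2^{-n}$ at the right scale). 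Hence the double sum collapses to $\sum_n \frac{1}{\log(1/\varphi(2^{-n}))}$, and by the usual comparison of a series with its integral (using monotonicity of $\varphi$) this diverges iff $\int_0^1 \frac{dt}{t\log(1/\varphi(t))}=\infty$, which is exactly~\eqref{LyubSeip}.

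The main obstacle I anticipate is the careful, two-sided control of the Gardiner quantity $\mathcal Q$ for a pseudohyperbolic disk, including the passage between the pseudohyperbolic metric on $D$ and the Euclidean metric on the normalized square $2^n S_{n,k}$, and the bookkeeping of the low-Poisson-mass regime (where the ``$\approx 1$ per nonempty square'' behavior dominates) versus the high-mass regime (where the $P/\log(1/\varphi)$ term dominates); one has to be sure that discarding or keeping the bounded terms does not change whether the series converges, which ultimately hinges on the fact that $\sum_{n,k}2^{-n}\mathbf 1[\Lambda\cap S_{n,k}\ne\emptyset]$ is \emph{already} divergent for a uniformly dense $\Lambda$, so the divergence of the full sum is automatic unless one restricts attention to the cone around $\zeta$ — and there the clean asymptotic $\mathcal Q\approx P/\log(1/\varphi)$ is valid. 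A secondary, more routine point is justifying the discrete-to-integral comparison $\sum_n \frac1{\log(1/\varphi(2^{-n}))}\asymp \int_0^1\frac{dt}{t\log(1/\varphi(t))}$, which follows from monotonicity of $\varphi$ and the dyadic decomposition of $(0,1)$.
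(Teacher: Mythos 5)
Your route reduces the problem to Gardiner's criterion (Theorem~\ref{gardi}), which is a genuinely different approach from that of the cited source \cite{MT}: Gardiner's theorem appeared two years \emph{after} \cite{MT}, whose original proof instead proceeds via the observation (recalled in the paper just after the statement of Theorem~\ref{ud}) that condition~\eqref{LyubSeip} is equivalent to the vanishing of the harmonic measure of $\partial\D$ in $\D\setminus\overline{\Lambda(\varphi)}$, and invokes \cite[Theorem~1]{OrSe} together with the reduction of Nevanlinna sampling to a determination problem for the corresponding subharmonic growth class. Your approach buys uniformity — one single local estimate of Gardiner's quantity $\mathcal Q$ for a union of disks — at the cost of requiring the later, more powerful Theorem~\ref{gardi}; the \cite{MT}/\cite{OrSe} approach is self-contained within harmonic-measure techniques available in 2008. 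The core local estimate you propose, $\mathcal Q(A,t)\approx t/\log(1/r)$ for $t\geq 1$ and $A$ a bounded union of disks of radius $r$ of unit-scale mutual separation, is correct (with the lower bound via an averaging argument and the upper bound by stacking $\lceil t/\log(1/r)\rceil$ points at each disk center), and the summation $2^{-n}\sum_k P_{z_{n,k}}(\zeta)\approx 1$ per level then delivers $\sum_n 1/\log(1/\varphi(2^{-n}))$, comparable to the integral in~\eqref{LyubSeip}.

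There is, however, a genuine error in your bookkeeping of the small--Poisson-mass regime, which you should fix even though it does not change the final answer. You write that since $\Lambda$ is uniformly dense and meets every Whitney square, "the bounded contribution $\sum_{n,k}2^{-n}\cdot 1=\infty$ would make (b) of Theorem~\ref{gardi} trivially hold". This is false, and if it were true the theorem could not be correct, since it would force every uniformly dense $\Lambda(\varphi)$ to be sampling regardless of $\varphi$. The point you are missing is that by definition $\mathcal Q(A,0)=0$, so the Gardiner sum automatically restricts to those $(n,k)$ with $\lfloor P_{z_{n,k}}(\zeta)\rfloor\geq 1$, i.e.\ $|z_{n,k}-\zeta|\lesssim 2^{-n/2}$. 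At scale $n$ there are only $\approx 2^{n/2}$ such squares, so the "$\mathcal Q\approx 1$" contributions sum to $\sum_n 2^{-n}\cdot 2^{n/2}=\sum_n 2^{-n/2}<\infty$. It is precisely this \emph{convergence} — not a divergence you must sidestep — that makes the argument clean: the bounded terms are negligible, and divergence of the Gardiner sum is governed entirely by the terms where $P_{z_{n,k}}(\zeta)\gtrsim\log(1/\varphi(2^{-n}))$, which after summing the Poisson kernel in $k$ produce exactly $\sum_n 1/\log(1/\varphi(2^{-n}))$.
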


This condition actually also characterizes
determination sets for the space of subharmonic functions in the disk having 
the 
characteristic growth of the Nevanlinna class \cite[Section 5]{MT}.


Condition \eqref{LyubSeip} is equivalent to the fact that the harmonic
measure of the exterior boundary $\partial\D$ of $\D \setminus 
\overline{\Lambda(\varphi)}$
is zero, see \cite[Theorem 1]{OrSe}. Notice also
that for any fixed $K>1$, condition \eqref{LyubSeip}
is equivalent to
\begin{equation}\label{LBsuma}
\sum_n \frac1{\log (1/\varphi(K^{-n}))}= \infty\ .
\end{equation}


The above family of examples allows
us to see that
there is no general relationship between $\aa$-sampling sets and Nevanlinna 
sampling sets.
We recall this former, and better known, notion of sampling.

\begin{definition}
A set $\Lambda\subset\D$ is \emph{sampling} for the space
\[
A^{-\alpha}=\{f\in\Hol (\D) : \|f\|_{\alpha}:=\sup_{z\in\D} 
(1-|z|)^\alpha|f(z)|<\infty\}\qquad \alpha>0,
\]
when there exists $C>0$ such that $\|f\|_{\alpha}\leq C 
\sup\limits_{\lambda\in\Lambda} (1-|\lambda|)^\alpha|f(z)|$
for all $f\in A^{-\alpha}$.
\end{definition}

A well-known result of K. Seip \cite[Theorem 1.1]{Se} characterizes 
$\aa$-sampling sets as those $\Lambda$ for which there 
exists a separated subsequence $\Lambda'=(\lambda_k)_k\subset\Lambda$ such that
\[
D_-(\Lambda'):=\liminf_{r\to 1^{-}}\inf_{z\in\D}
\frac{\sum\limits_{k: 1/2<\rho(\lambda_k,z)<r}\log\frac 
1{\rho(\lambda_k,z)}}{\log\frac 1{1-r}}>\alpha\ .
\]

Let $\Lambda_g$ be a fine net associated to a function $g$ with 
$\int_0\frac{dt}{t^{1/2} g(t)}<\infty$, for instance $g(t)=t^{1/4}$. 
According to Theorem~\ref{thmxarxes}, $\Lambda_g$ is not a Nevanlinna sampling 
set. On the other hand, for any given $\alpha>0$, 
we can extract a maximal separated sequence $\Lambda'$ with a separation 
constant small enough so that $D_-(\Lambda')>\alpha$, 
hence $\Lambda_g$ is $\aa$-sampling for all $\alpha>0$.

Also, given $\alpha>0$, consider a uniformly dense sequence $\Lambda$ with 
$D_-(\Lambda)<\alpha$
and take $\varphi$ satisfying $\lim_{t\to 0}\varphi(t)=0$ and \eqref{LyubSeip}. 
Then 
according to Theorem \ref{ud}, $\Lambda(\varphi)$ is Nevanlinna sampling; but 
it is not $\aa$-sampling, since 
$D_-(\Lambda')<\alpha$ for any separated $\Lambda'\subset \Lambda(\varphi)$. 

Alternatively, take a set $\Lambda$ as in Theorem \ref{thmlines}, sampling for 
$\nev$, 
with $\lim\limits_{n\to \infty} \frac{1-r_{n+1}}{1-r_n} = 0$. Then 
$D_-(\Lambda)=0$, so it cannot be $\aa$-sampling for any $\alpha>0$.

\section{Finitely Generated Ideals}\label{ideals}

Here we report mostly on \cite{HMN1}.

In the study of the uniform algebra $\H^\infty $ it is important to know its 
maximal ideals, and thus to know whether 
an ideal is or is not the whole of the algebra.  When considering the ideal 
generated by functions
$f_1, \dots, f_n \in \H^\infty $, an easy necessary condition for it to be 
the whole of the algebra is
$\inf_{z\in \D} \left( |f_1(z)| +\cdots + |f_n(z)| \right) >0$. This turns out 
to be sufficient:
this is the content of the Corona Theorem.

\begin{theorem}[Carleson 1962 ; see \cite{Ga} or \cite{NikTr})]
\label{corona}
If $f_1, \dots, f_n \in \H^\infty $ and
\[
\inf_{z\in \D} \left( |f_1(z)| +\cdots + |f_n(z)| \right) >0,
\] then there 
exist 
$g_1, \dots, g_n \in \H^\infty $  such that $f_1 g_1 +\cdots + f_n g_n \equiv 
1$.
\end{theorem}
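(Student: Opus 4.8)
The plan is to reduce the Bézout identity $f_1g_1+\cdots+f_ng_n\equiv 1$ to an $L^\infty$-estimate for a $\bar\partial$-equation, following the H\"ormander--Wolff approach to the corona problem. Set $\delta:=\inf_{z\in\D}\sum_{j=1}^n|f_j(z)|>0$ and normalize so that $\|f_j\|_\infty\le 1$ for all $j$. First I would write down a \emph{smooth} (non-holomorphic) solution of the equation: put $\varphi_j:=\bar f_j\big/\sum_{i=1}^n|f_i|^2$, so that $\sum_{j}\varphi_j f_j\equiv 1$ on $\D$ and $\|\varphi_j\|_\infty\le n\delta^{-2}$. To make the solution holomorphic I look for
\[
 g_j=\varphi_j-\sum_{k=1}^n a_{jk}f_k,\qquad a_{jk}=-a_{kj},
\]
with functions $a_{jk}$ still to be determined. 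The antisymmetry forces $\sum_j f_jg_j=\sum_j\varphi_jf_j-\sum_{j,k}a_{jk}f_jf_k\equiv 1$ no matter what the $a_{jk}$ are, so only the holomorphy of the $g_j$ remains to be arranged.

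Since each $f_k$ is holomorphic, $\bar\partial g_j=\bar\partial\varphi_j-\sum_k(\bar\partial a_{jk})f_k$, so it is enough to solve $\bar\partial a_{jk}=b_{jk}$, where $b_{jk}:=\varphi_k\,\bar\partial\varphi_j-\varphi_j\,\bar\partial\varphi_k$. Indeed, applying $\bar\partial$ to $\sum_k\varphi_kf_k\equiv 1$ gives $\sum_k f_k\,\bar\partial\varphi_k\equiv 0$, hence $\sum_k f_kb_{jk}=\bar\partial\varphi_j$, so that $\bar\partial g_j=\bar\partial\varphi_j-\sum_k f_k\,\bar\partial a_{jk}=0$. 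As $b_{jk}$ is itself antisymmetric in $(j,k)$, a \emph{linear} solution operator for $\bar\partial$ automatically returns an antisymmetric matrix $(a_{jk})$, and then each $g_j$ is holomorphic; it is bounded as soon as the $a_{jk}$ are, since the $\varphi_j$ and $f_k$ are bounded.

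The third step supplies the measure-theoretic input needed to solve these $\bar\partial$-equations \emph{boundedly}. Because $\bar\partial f_i=0$, a direct differentiation gives $|\bar\partial\varphi_j(z)|\le C(n,\delta)\sum_i|f_i'(z)|$, hence $|b_{jk}(z)|\le C(n,\delta)\sum_i|f_i'(z)|$. By the Littlewood--Paley identity, for each $i$ the measure $|f_i'(z)|^2(1-|z|^2)\,dA(z)$ is a Carleson measure on $\D$ with norm $\lesssim\|f_i\|_\infty^2$; summing, $|b_{jk}(z)|^2(1-|z|^2)\,dA(z)$ is a Carleson measure with norm bounded in terms of $n$ and $\delta$ alone. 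The final step is to invoke the solvability of $\bar\partial b=g$ on $\D$ with $\|b\|_\infty$ controlled by the Carleson norm of $|g(z)|^2(1-|z|^2)\,dA(z)$; applying this to each $g=b_{jk}$ produces bounded $a_{jk}$, hence bounded holomorphic $g_j$ with $f_1g_1+\cdots+f_ng_n\equiv 1$.

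I expect this last step to be the main obstacle: manufacturing a \emph{bounded} solution of the $\bar\partial$-equation out of Carleson-measure data, all the preceding reductions being elementary linear algebra and calculus. The cleanest route is Wolff's lemma --- start from the explicit first guess $b_0(z)=\frac1\pi\iint_\D\frac{g(w)}{w-z}\,dA(w)$ (or its disc-adapted kernel), observe that a suitable auxiliary function is superharmonic, and run Green's theorem together with either $H^1$--$\mathrm{BMO}$ duality or Carleson's embedding theorem to upgrade the obvious $L^2$-type control to the required $L^\infty$ bound. Since this theorem is recalled here only as the classical template whose Nevanlinna analogue (Mortini's theorem, Section~\ref{ideals}) is the actual object of study, it suffices to carry out this scheme as in \cite{Ga} or \cite{NikTr}.
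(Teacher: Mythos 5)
The paper offers no proof of this theorem; it is recalled as classical background with a pointer to \cite{Ga} and \cite{NikTr}, and the argument you outline --- normalizing, taking the smooth solution $\varphi_j=\bar f_j/\sum_i|f_i|^2$, applying the Koszul antisymmetric correction $g_j=\varphi_j-\sum_k a_{jk}f_k$, reducing to $\bar\partial a_{jk}=b_{jk}$ with $b_{jk}=\varphi_k\bar\partial\varphi_j-\varphi_j\bar\partial\varphi_k$, and finishing with Wolff's $\bar\partial$-estimate fed by Littlewood--Paley Carleson measures --- is exactly the H\"ormander--Wolff proof given in Garnett. So your proposal matches the referenced proof in structure and in every reduction step.

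One small inaccuracy to flag in your last step: the bare statement ``solve $\bar\partial b=g$ with $\|b\|_\infty$ controlled by the Carleson norm of $|g|^2(1-|z|^2)\,dA$'' is not a theorem on its own. Wolff's lemma requires a second Carleson hypothesis, on $|\bar\partial g|(1-|z|^2)\,dA$ (equivalently, some control on $\partial\bar\partial$ of the potential one builds). In the present situation this extra hypothesis does hold: $\bar\partial b_{jk}=\varphi_k\,\bar\partial^2\varphi_j-\varphi_j\,\bar\partial^2\varphi_k$ is bounded pointwise by $C(n,\delta)\sum_i|f_i'|^2$, and $\sum_i|f_i'(z)|^2(1-|z|^2)\,dA$ is again Carleson by the same Littlewood--Paley estimate you already used, so both conditions are met and Wolff's lemma applies. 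With that clarification your sketch is complete and correct.
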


More generally, 
we denote by $I_{\H^\infty}(f_1,\ldots, f_n)$ the ideal generated by the 
functions $f_1,\ldots,f_n$ in $\H^{\infty}$.
The general structure of these ideals is not well understood (see the 
references 
in \cite{HMN1}).
\cite{Mort1}, 

In certain situations the ideals can be characterized by
growth conditions. In this context, the following ideals have been studied:
\[ 
 J_{\H^\infty}(f_1,\ldots,f_n)=\Bigl\{f\in \H^{\infty}:\, \exists c= c(f)>0\, 
,\, 
|f(z)|\le c\sum_{i=1}^n |f_i(z)|\, ,\, z\in \D\Bigr\}.
 \]
It is obvious that $I_{\H^\infty}(f_1,\ldots,f_n)\subset 
J_{\H^\infty}(f_1,\ldots,f_n)$. This leads us to the type of results we 
are interested in here.
Tolokonnikov \cite{Tol83} proved that the following conditions are equivalent:
\begin{itemize}
\item [(a)] $ J_{\H^\infty}(f_1,\ldots,f_n)$ contains an interpolating Blaschke 
product for $\H^\infty$, 
\item [(b)] $I_{\H^\infty} (f_1,\ldots,f_n)$ contains an interpolating Blaschke 
product for $\H^\infty$, 
\item [(c)] $\inf\limits_{z\in\D} \sum_{i=1}^n (|f_i(z)|+(1-|z|^2)|f_i'(z)|)>0$.
\end{itemize}

As it turns out, in the special situation of two generators with no common zeros
these conditions are equivalent to $ 
I_{\H^\infty}(f_1,f_2)=J_{\H^\infty}(f_1,f_2)$.
In the case of two generators $f_1$ and $f_2$ with common zeros, we have 
$I_{\H^\infty}(f_1,f_2) = J_{\H^\infty}(f_1,f_2)$ if and only if 
$I_{\H^\infty}(f_1,f_2)$ contains a 
function of the form $BB_{1,2}$ where $B$ is a $\H^\infty$-interpolating 
Blaschke product and $B_{1,2}$ 
is the Blaschke product formed with the common zeros of $f_1$ and $f_2$  (see 
\cite{GMN2}).

For the Nevanlinna class R. Mortini observed that a well-known result of T. 
Wolff implies the following corona theorem (see \cite{Mort0} or  \cite{Mart}).

\begin{theorem}(R. Mortini, \cite{Mort0})\label{N-corona}
Let $I(f_1,\ldots,f_n)$ denote the ideal generated in $\N$ by a given family of 
functions 
$f_1,\ldots, f_m\in \N$. Then $I(f_1,\ldots,f_n)=\N$ if and only if there 
exists 
$H\in \Har_+(\D)$
such that
\[
 \sum_{i=1}^n |f_i(z)|\ge e^{-H(z)},\quad z\in \D.
\]
\end{theorem}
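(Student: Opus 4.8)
## Proof proposal for Theorem~\ref{N-corona}

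\textbf{Necessity.} If $I(f_1,\dots,f_n)=\N$, then there exist $g_1,\dots,g_n\in\N$ with $\sum_i f_i g_i\equiv 1$. Each $g_i\in\N$, so $\log_+|g_i|$ admits a positive harmonic majorant $h_i$; put $H_0=\sum_i h_i$. Then on $\D$ we have $1=|\sum_i f_i g_i|\le \sum_i|f_i||g_i|\le \bigl(\max_i|g_i(z)|\bigr)\sum_i|f_i(z)|\le e^{H_0(z)}\sum_i|f_i(z)|$, giving $\sum_i|f_i(z)|\ge e^{-H_0(z)}$. One should also absorb the possibility $|g_i|<1$; the bound $|g_i|\le 1+|g_i|\le e^{h_i}$ after replacing $h_i$ by $\log(1+e^{h_i})$-type majorant works, or just note $\max_i|g_i|\le \sum_i(1+|g_i|)$ and take $H$ a harmonic majorant of $\log\sum_i(1+|g_i|)$, which exists by the remark in Section~\ref{topology} that $\log(1+|f|)$ has a harmonic majorant iff $\log_+|f|$ does.

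\textbf{Sufficiency.} This is the substantive direction and where the bulk of the work lies. The strategy is to reduce to the bounded corona theorem (Theorem~\ref{corona}) via the factorization $\N = \{F/G : F,G\in\H^\infty,\ G \text{ zero-free}\}$. Using the hypothesis, write $e^{-H}=|e^{-\Phi}|$ where $\Phi$ is holomorphic with $\Re\Phi=H$ (available since $\D$ is simply connected); then $G:=e^{-\Phi}\in\N$ is zero-free and $\sum_i|f_i(z)|\ge |G(z)|$ for all $z\in\D$. The point is that Wolff's proof of the corona theorem — via the $\bar\partial$-equation $\partial b = f\,\overline{\partial}(\text{something})$ with explicit Carleson-measure estimates — produces solutions whose size is controlled pointwise by the data, not merely in sup-norm. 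Concretely, one follows the classical Wolff argument to construct smooth functions $\psi_i$ with $\sum_i f_i\psi_i=1$ and then corrects them to holomorphic $g_i$ by solving $\bar\partial b_{ij} = \psi_i\bar\partial\psi_j$ (antisymmetrized); setting $g_i=\psi_i+\sum_j(b_{ij}-b_{ji})f_j$ gives $\sum_i f_i g_i=1$. The explicit kernel solution of the $\bar\partial$-equation, together with the fact that the forcing terms are dominated by powers of $(\sum_i|f_i|)^{-1}\le e^{H}$ times quantities giving finite Carleson mass, yields that each $g_i$ is bounded by $e^{C H^*}$ for a related positive harmonic function, hence $g_i\in\N$. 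The clean way to organize this: apply the bounded corona theorem on each ``level set'' or, better, to the rescaled data $\tilde f_i = f_i/(\text{dominating }\H^\infty\text{ function})$; alternatively invoke directly the refined version of Wolff's theorem with harmonic control, as stated in \cite{Mort0} or \cite{Mart}, which is exactly the assumed ``well-known result of T.\ Wolff''.

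\textbf{Main obstacle.} The crux is that the sup-norm estimates in the standard corona proof must be upgraded to pointwise control by a positive harmonic function. This is not automatic: one must verify that the Carleson measures arising in Wolff's argument (built from $|\nabla\psi_i|^2(1-|z|)\,dm(z)$ and the like) have \emph{harmonically controlled} mass on each Carleson box, so that the solution of $\bar\partial b = g$ satisfies $|b(z)|\lesssim H^\sharp(z)$ for some $H^\sharp\in\Har_+(\D)$ rather than just $\|b\|_\infty<\infty$. Once this harmonic bookkeeping is in place — essentially tracking how the lower bound $e^{-H}$ propagates through the estimates and using that $\H^\infty\subset\N$ absorbs bounded factors — the algebra identity $\sum_i f_i g_i\equiv 1$ with $g_i\in\N$ follows, completing the proof.
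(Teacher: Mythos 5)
Your necessity argument is fine and is essentially what the paper does (the paper writes $1\le\max_j|g_j|\cdot\sum_j|f_j|$ and the claim then follows since each $\log_+|g_j|$ has a positive harmonic majorant).

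\textbf{Sufficiency.} Here your proposal diverges from the paper's proof and, as written, has a genuine gap. You propose to rerun Wolff's $\bar\partial$-argument and assert that the sup-norm Carleson estimates can be ``upgraded to pointwise control by a positive harmonic function,'' but you never show this, and it is not at all clear how to make it work: the standard corona construction produces functions bounded by a constant depending on $\inf\sum|f_i|$, and if that infimum is $0$ (as it is here) the estimate degenerates globally, not locally, so there is nothing obviously harmonic about the loss of control. The ``main obstacle'' paragraph identifies this as the crux but does not resolve it.

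The paper avoids this entirely by invoking the $g^3$ version of Wolff's theorem (\cite[Theorem 8.2.3]{Ga}): if $g,g_1,\dots,g_N\in\H^\infty$ and $|g_1|+\cdots+|g_N|>|g|$ on $\D$, then $g^3\in I_{\H^\infty}(g_1,\dots,g_N)$. Reduce to $\H^\infty$ by a common denominator: write $f_i=F_i/G_0$ with $F_i,G_0\in\H^\infty$, $\|F_i\|_\infty,\|G_0\|_\infty\le1$, $G_0$ zero-free. Let $\Phi\in\Hol(\D)$ with $\Re\Phi=H$ and set $g:=G_0e^{-\Phi}\in\H^\infty$. The hypothesis $\sum_i|f_i|\ge e^{-H}$ becomes $\sum_i|F_i|\ge|g|$, so by Wolff's theorem there are $h_i\in\H^\infty$ with $\sum_iF_ih_i=g^3$. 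Then $\phi_i:=h_iG_0/g^3$ satisfies $\sum_if_i\phi_i=1$, and $\phi_i\in\N$ because $\log|g^{-3}|=3(-\log|G_0|+H)$ is a positive harmonic function and $h_iG_0\in\H^\infty$. This is both shorter and sidesteps the harmonic-bookkeeping issue you flagged; the only ``harmonic control'' needed is the trivial observation that $1/g^3$ is invertible in $\N$. You gesture at this in the phrase ``the rescaled data $\tilde f_i$'' but do not carry it out, and the reference to a ``refined version of Wolff's theorem with harmonic control'' is a red herring: the ordinary $g^3$ theorem suffices.
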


The necessary condition is clear since the B\'ezout equation implies that
$1 \leq \max_j( |f_j|) \sum_j |g_j|$.
As observed in \cite{Mort0}, the sufficient condition is a corollary of a 
theorem of Wolff \cite[Theorem 8.2.3, p. 239]{Ga}.
\begin{theorem}
If $g, g_1, \dots, g_N \in H^\infty (\D)$ and for all $z\in \D$,
$ |g_1(z)| +\cdots + |g_N(z)| > |g(z)|$, then there exist
$f_1, \dots, f_N \in H^\infty (\D)$  such that $f_1 g_1 +\cdots + f_Ng_N = g^3$.
\end{theorem}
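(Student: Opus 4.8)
The plan is to run Wolff's $\bar\partial$ argument. We may assume $g\not\equiv 0$ (otherwise take all $f_i\equiv 0$) and, after dividing $g$ and the $g_i$ by $\max(\|g\|_\infty,\max_i\|g_i\|_\infty)$ and rescaling the $f_i$ at the end, that $\|g\|_\infty\le 1$ and $\|g_i\|_\infty\le 1$. Set $\Phi:=\sum_{i=1}^N|g_i|^2$. The hypothesis $\sum_i|g_i|>|g|\ge 0$ forces $\Phi>0$ on all of $\D$, so the smooth functions $\varphi_i:=\bar g_i/\Phi$ are well defined, satisfy $\sum_i\varphi_ig_i\equiv 1$, and, since each $g_i$ is holomorphic, $\sum_i g_i\,\bar\partial\varphi_i=\bar\partial\bigl(\sum_i g_i\varphi_i\bigr)=0$. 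Note also $|g|\le\sum_m|g_m|\le\sqrt N\,\Phi^{1/2}$ and $|g_m|\le\Phi^{1/2}$.

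Next I would perform the Koszul-complex correction. Put $G_{ik}:=g^3\bigl(\varphi_k\,\bar\partial\varphi_i-\varphi_i\,\bar\partial\varphi_k\bigr)$, an antisymmetric family of $(0,1)$-forms on $\D$, and suppose for a moment we have bounded functions $c_{ik}$ on $\D$ with $\bar\partial c_{ik}=G_{ik}$ and $c_{ik}=-c_{ki}$. Then set $f_i:=g^3\varphi_i-\sum_{k=1}^N g_k c_{ik}$. Each $f_i$ is bounded, since $|g^3\varphi_i|=|g|^3|g_i|/\Phi\le|g|^3/\Phi^{1/2}$ is bounded by the estimates above and each $g_kc_{ik}$ is bounded; it is holomorphic, because
\[
\bar\partial f_i=g^3\,\bar\partial\varphi_i-\sum_k g_k G_{ik}=g^3\,\bar\partial\varphi_i-g^3\Bigl[\bigl(\textstyle\sum_k g_k\varphi_k\bigr)\bar\partial\varphi_i-\varphi_i\textstyle\sum_k g_k\,\bar\partial\varphi_k\Bigr]=0,
\]
so $f_i\in H^\infty(\D)$ by Weyl's lemma; and, using $\sum_i\varphi_ig_i\equiv 1$ together with $c_{ik}=-c_{ki}$, one gets $\sum_i f_ig_i=g^3\sum_i\varphi_ig_i-\sum_{i,k}g_ig_kc_{ik}=g^3$, the desired B\'ezout identity.

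Everything thus reduces to producing bounded, antisymmetric solutions of $\bar\partial c_{ik}=G_{ik}$. First one simplifies the data: writing $\bar\partial\varphi_m=\overline{g_m'}/\Phi-\bar g_m\,\bar\partial\Phi/\Phi^2$, the $\bar\partial\Phi$ terms cancel by antisymmetry and
\[
G_{ik}=g^3\,\frac{\bar g_k\,\overline{g_i'}-\bar g_i\,\overline{g_k'}}{\Phi^{2}}\,d\bar z .
\]
Using $|g_m|\le\Phi^{1/2}$ and $|g|\le\sqrt N\,\Phi^{1/2}$ (hence $|g|^3/\Phi^{3/2}\lesssim_N 1$), this yields $|G_{ik}(z)|\lesssim_N|g_i'(z)|+|g_k'(z)|$. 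Since $|g_m'(z)|\lesssim\diz^{-1}$ and, by the Littlewood--Paley estimate, $|g_m'(z)|^2\diz\,dx\,dy$ is a Carleson measure of norm $\lesssim\|g_m\|_\infty^2\le 1$, both $|G_{ik}(z)|^2\diz\,dx\,dy$ and $|G_{ik}(z)|\diz\,dx\,dy$ are Carleson measures with norms $\lesssim_N 1$.

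The last step is the genuinely hard one: solving $\bar\partial c_{ik}=G_{ik}$ with a solution in $L^\infty$ --- not merely in $\mathrm{BMO}$ --- under such Carleson control. This is the analytic heart of Carleson's corona theorem (Carleson's construction, or P.~Jones's explicit solution kernel, or the $H^1$--$\mathrm{BMO}$ duality argument); see \cite[Ch.~VIII]{Ga}. Applying it through a \emph{linear} solution operator $T$ gives $c_{ik}:=T(G_{ik})$ with $\|c_{ik}\|_\infty\lesssim_N 1$, and linearity together with $G_{ik}=-G_{ki}$ forces $c_{ik}=-c_{ki}$ (alternatively, replace $c_{ik}$ by $\frac{1}{2}(c_{ik}-c_{ki})$, which still solves the equation). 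This is exactly what the cube in $g^3$ is for: one power of $g$ is absorbed to turn the $L^2$-control of the $g_i'$ into the Carleson estimate for $|G_{ik}|^2\diz\,dx\,dy$, and the remaining control is precisely what feeds the bounded-$\bar\partial$ machinery. Everything else is the routine algebra recorded above.
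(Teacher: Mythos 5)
The paper does not prove this statement; it simply cites it as Wolff's theorem from Garnett's book (Theorem~8.2.3 there) and uses it as a black box to derive the Nevanlinna corona theorem. So there is no proof of the paper's own to compare against. What you have written is the standard Koszul-complex/Wolff $\bar\partial$-argument, and it is essentially correct: the normalization, the choice $\varphi_i=\bar g_i/\Phi$, the Koszul correction $f_i=g^3\varphi_i-\sum_k g_kc_{ik}$ with antisymmetric $c_{ik}$, the cancellation of the $\bar\partial\Phi$ terms, and the observation that the cube in $g^3$ is precisely what turns $|g|^3/\Phi^{3/2}$ into a bounded quantity so that $|G_{ik}|\lesssim_N |g_i'|+|g_k'|$, are all right. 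The symmetrization $\tfrac12(c_{ik}-c_{ki})$ to enforce antisymmetry is also fine.

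One small inaccuracy worth flagging: you assert that $|G_{ik}(z)|\diz\,dx\,dy$ is a Carleson measure, but that does not follow from $|g_m'(z)|\lesssim\diz^{-1}$ together with Littlewood--Paley; indeed, $\iint_{T(Q)}|g_m'|\diz\,dx\,dy$ is in general only $O(\ell(Q)^2)$, not $O(\ell(Q))$. What your estimates actually give, and what the $\bar\partial$-lemma in Garnett's Chapter~VIII requires, is the pointwise bound $\diz|G_{ik}(z)|\lesssim_N 1$ together with the genuine Carleson condition on $|G_{ik}|^2\diz\,dx\,dy$. Since both of those \emph{are} established by your computations, the conclusion stands; it is only the label ``Carleson measure'' on $|G_{ik}|\diz\,dx\,dy$ that should be dropped in favor of the uniform pointwise bound.
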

It will be enough to take a holomorphic $g$ such that $|g|=e^{-H}$, and conclude
using the fact that $g^3$ is invertible in $\N$.

The ideal corresponding to $J_{\H^{\infty}}$ in $\N$ is defined as:
\[ 
 J(f_1,\ldots,f_n)=\Bigl\{f\in N:\exists H=H(f)\in \Har_+(\D)\, ,\, |f(z)|\le 
e^{H(z)}\sum_{i=1}^n |f_i(z)|\, ,\, z\in \D\Bigr\}.
\]
It is clear that $I(f_1,\ldots,f_n)\subset J(f_1,\ldots,f_n)$. Notice also 
that, by the  previous corona theorem, in the case
when $J(f_1,\ldots,f_n)=\N$, then $I(f_1,\ldots,f_n)=\N$. 

The analogues for $\N$ of the results mentioned above in the context of 
$\H^\infty$ 
read as follows. We see, again, that the general principle of substituting 
$\H^\infty$ by $\N$ and boundedness by a control by a positive harmonic function
remains valid.

\begin{theorem}\label{MainThm}
Let $f_1,\ldots,f_n$ be functions in $\N$. Then the following conditions are 
equivalent:
\begin{itemize}
\item  [(a)] $I(f_1,\ldots,f_n)$ contains a Nevanlinna interpolating Blaschke 
product,  
\item  [(b)]$J(f_1,\ldots,f_n)$ contains a Nevanlinna interpolating Blaschke 
product, 
\item  [(c)] There exists a function $H\in \Har_+(\D)$ such that 
\[
\sum_{i=1}^n (|f_i(z)|+(1-|z|^2)|f_i'(z)|)\ge e^{-H(z)}\, , \ z\in \D.
\]
\end{itemize}
In case $n=2$, if $f_1$ and $f_2$ have no common zeros, the above conditions 
are 
equivalent to 

\begin{itemize}
\item [(d)] $I(f_1,f_2)=J(f_1,f_2)$.
\end{itemize}
\end{theorem}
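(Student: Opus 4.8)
The plan is to mirror, in the Nevanlinna setting, the classical chain of implications $(c)\Rightarrow(a)\Rightarrow(b)\Rightarrow(c)$ and then handle the two-generator equivalence $(c)\Leftrightarrow(d)$ separately. For $(c)\Rightarrow(a)$, I would first note that condition (c) is precisely the hypothesis of Theorem~\ref{alt-interpolation}(c) applied not to a single Blaschke product but to the sum $\sum_i(|f_i|+(1-|z|^2)|f_i'|)$; the idea is to use (c) together with Mortini's corona theorem (Theorem~\ref{N-corona}) to produce a Nevanlinna interpolating Blaschke product inside the ideal. Concretely, I would look for a Blaschke product $B$ whose zeros $Z$ form a Nevanlinna interpolating sequence and which is ``subordinate'' to the $f_i$ in the sense that $|B(z)|\lesssim e^{H(z)}\sum_i|f_i(z)|$ for some positive harmonic $H$; the natural candidate is a Blaschke product supported on a suitably chosen sequence of points where $\sum_i|f_i|$ is small, with the derivative term in (c) guaranteeing that this sequence is separated enough (via Theorem~\ref{alt-interpolation}(a)/(c)) to be Nevanlinna interpolating. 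Once $B\in J(f_1,\dots,f_n)$ is built, I would upgrade membership in $J$ to membership in $I$ using the Wolff-type trick: $B^3$ lies in $I(f_1,\dots,f_n)$ by Wolff's theorem (applied after extracting a bounded common denominator), and $B^3$ generates the same ideal as $B$ because the missing factor is $B$ times an invertible element—more carefully, one shows $I(f_1,\dots,f_n)$ contains $B$ itself by noting $B = B^3\cdot(\text{something in }\N)$ fails, so instead one argues directly that an interpolating Blaschke product $B$ with $B^3\in I$ forces $B\in I$ via a division argument using that the trace space is ideal (Theorem~\ref{nevinter}).

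The implication $(a)\Rightarrow(b)$ is immediate from $I\subset J$. For $(b)\Rightarrow(c)$, suppose $J(f_1,\dots,f_n)$ contains a Nevanlinna interpolating Blaschke product $B$ with zero set $Z=(z_k)_k$. Then there is $H\in\Har_+(\D)$ with $|B(z)|\le e^{H(z)}\sum_i|f_i(z)|$ for all $z$. Evaluating at $z=z_k$ kills the left side, so this alone is not enough; instead I would differentiate. At each zero $z_k$ we have $B(z_k)=0$, and the Nevanlinna interpolating condition Theorem~\ref{alt-interpolation}(a) gives $(1-|z_k|^2)|B'(z_k)|=|B_k(z_k)|\ge e^{-H_0(z_k)}$ for some positive harmonic $H_0$. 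Applying a Schwarz–Pick / Cauchy estimate on a pseudohyperbolic disk $\mD_k^{H_1}=D(z_k,e^{-H_1(z_k)})$ of controlled radius, the bound $|B|\le e^H\sum_i|f_i|$ propagates to a lower bound on $\sum_i(|f_i(z_k)|+(1-|z_k|^2)|f_i'(z_k)|)$ in terms of $(1-|z_k|^2)|B'(z_k)|$ and hence $e^{-H_0(z_k)}$. Away from the zeros of $B$, condition (b) directly gives $\sum_i|f_i(z)|\ge e^{-H(z)}|B(z)|\ge e^{-H(z)}\rho(z,Z)e^{-H_2(z)}$ using Theorem~\ref{alt-interpolation}(b), and one checks $\rho(z,Z)$ is bounded below by $e^{-H_3(z)}$ outside the disks $\mD_k^{H_1}$ while inside those disks the derivative estimate takes over. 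Summing a finite number of positive harmonic functions produces the single $H$ required in (c).

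For the case $n=2$ with $f_1,f_2$ having no common zeros, I would prove $(c)\Leftrightarrow(d)$. The direction $(d)\Rightarrow(c)$: if $I(f_1,f_2)=J(f_1,f_2)$, I want to exhibit the derivative lower bound. Here the key observation is that $f_1'f_2-f_1f_2'$ is (up to the Wronskian normalization) a function that, on the common zero set, is nonvanishing precisely because the zeros are simple and disjoint; more usefully, the function $g:=f_1^2/(f_1^2+f_2^2)$ type constructions, or rather, one checks that $(c)$ in its Theorem~\ref{MainThm} form is equivalent to membership of a specific interpolating Blaschke product in the ideal, which by $(d)$ and $I=J$ we can test using the $J$-membership criterion—reducing to $(b)\Rightarrow(c)$ already proved. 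The direction $(c)\Rightarrow(d)$ is the heart of the matter and, I expect, the main obstacle: one must show every $f\in J(f_1,f_2)$ actually lies in $I(f_1,f_2)$. The strategy is the classical one (Tolokonnikov, Gorkin–Mortini–Nikolski): from (c) extract via $(c)\Rightarrow(a)$ an interpolating Blaschke product $B\in I(f_1,f_2)$; write $f_1=Bu_1+B_1v_1$, $f_2=Bu_2+B_2v_2$ in terms of the factorization separating the $B$-zeros—but since $f_1,f_2$ share no zeros, the relevant Blaschke factor is just $B$ itself, and one solves a Bézout-type problem for $f\in J$ against $f_1,f_2$ modulo $B$, then corrects using $B\in I$. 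The genuine difficulty is controlling the Nevanlinna norms (positive harmonic majorants) through this division: every time one divides by $B$ one risks losing the harmonic-majorant control near the zeros of $B$, and the fix is to use that $B$ is \emph{Nevanlinna interpolating} so that division by $B$ respects the class up to an extra positive harmonic summand—this is exactly what Theorem~\ref{nevinter}(b)/(c) buys us. I anticipate that carefully tracking these majorants, especially in the step where one passes from the finitely-many local Bézout solutions to a global one via a Wolff-type or $\bar\partial$-type argument adapted to $\N$, will be the most delicate part of the proof.
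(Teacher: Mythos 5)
Your high-level chain $(c)\Rightarrow(a)\Rightarrow(b)\Rightarrow(c)$ and the separate treatment of $(d)$ is the right skeleton, and the implication $(a)\Rightarrow(b)$ is indeed trivial. But there are several concrete gaps that prevent this from being a proof, and the approach diverges from what the paper itself flags as the key ingredient.

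First, the step from ``$B\in J$'' to ``$B\in I$'' in your $(c)\Rightarrow(a)$ argument does not close. You yourself note that $B=B^3\cdot(\text{something})$ fails, and the replacement you offer---``a division argument using that the trace space is ideal''---is not an argument: if $B^3=\sum g_i f_i$ with $g_i\in\N$, there is no reason the $g_i$ should be divisible by $B^2$, and the ideal property of the trace space $\N|Z$ concerns interpolation of \emph{values}, not divisibility of Bézout coefficients. In the $\H^\infty$ case (Tolokonnikov), and in the Nevanlinna case as well, the construction must produce an interpolating Blaschke product \emph{directly} in $I$, not first in $J$ with a subsequent upgrade. This requires choosing the zero set of $B$ inside the zero sets of the $f_i$ (so that $B$ divides a suitable product of factors of the $f_i$), not merely on points where $\sum|f_i|$ is small. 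Second, your $(d)\Rightarrow(c)$ sketch is circular: $(d)$ is the equality $I=J$ and says nothing by itself about $J$ containing an interpolating Blaschke product, so you cannot ``test membership using the $J$-membership criterion'' without first exhibiting a candidate; the actual argument (in both the $\H^\infty$ and $\N$ settings) is a contrapositive or a much more delicate construction, not a reduction to $(b)\Rightarrow(c)$.

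Finally, the paper explicitly states that the proof relies on the harmonic-measure characterization of Nevanlinna interpolating sequences, Theorem~\ref{alt-interpolation}(d): there exists $H\in\Har_+(\D)$ with the disks $\mD_k^H$ disjoint and $\inf_k\omega(z_k,\partial\D,\Omega_k^H)>0$. Your proposal uses parts (a)--(c) of that theorem but never touches (d). This is not cosmetic: the harmonic-measure estimate is what lets one control, near the constructed zeros of $B$, the harmonic majorants that appear when solving the Bézout equation and dividing through, precisely the point where you acknowledge ``the most delicate part'' lies. Without it, the majorant bookkeeping in both $(c)\Rightarrow(a)$ and $(c)\Rightarrow(d)$ has no mechanism to close. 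So while the outline of your proof mirrors the classical one, the two crucial technical steps are missing the tool the actual proof is built on.
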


As in $\H^{\infty}$, each of the conditions (a)-(c) implies $I(f_1,\ldots, 
f_n)=J(f_1,\ldots,f_n)$. On the other hand, an example similar to that
given for the $\H^\infty$ case shows that when $n\ge 3$, 
the converse fails. This example is given by two Nevanlinna interpolating 
Blaschke products $B_1$, $B_2$ with respective zero-sets $Z_1$, $Z_2$. 
In this situation
\[
 I(B_1^2, B_2^2, B_1 B_2)=J(B_1^2, B_2^2, B_1 B_2).
\]
But if $Z_1$ and $Z_2$ are too close, then condition (c) in 
Theorem~\ref{MainThm} cannot hold.

Also, like in the $\H^{\infty}$-situation, if the two generators 
$f_1$ and $f_2$ have common zeros, then $I(f_1,f_2)=J(f_1,f_2)$ if and only if 
$I(f_1,f_2)$ contains a function of the form $BB_{1,2}$ where $B$ is a 
Nevanlinna 
interpolating Blaschke product and $B_{1,2}$ is the Blaschke product formed 
with the 
common zeros of $f_1$ and $f_2$. 

The proof Theorem \ref{MainThm}
uses some of the ideas from the $\H^\infty$ case, but also some 
specific properties of the Nevanlinna class, in particular the 
description of Nevanlinna interpolating sequences in terms of harmonic measure 
seen in 
Theorem\ref{alt-interpolation}(d).

\subsection{Stable rank and two open problems}

The ideal generated by
an $n$-tuple of functions $f_1, \dots, f_n \in \H^\infty $ satisfying the 
conclusion of the Corona Theorem \ref{corona}
is the whole of $\hid$.  In general, in an algebra $A$ (or even a unitary ring) 
an $n$-tuple 
which is not contained in any ideal smaller than $A$ is called 
\emph{unimodular}.  
An unimodular $n+1$-tuple $f_1, \dots, f_n, f_{n+1} \in A$ such that there 
exist $h_1, \dots, h_n \in A$ 
with $f_1+h_1 f_{n+1}, \dots, f_n+h_n f_{n+1}$ unimodular is called 
\emph{reducible}.

\begin{definition}
The \emph{stable rank} of an algebra $A$ is the smallest integer $n$ such that 
any $n+1$-tuple in $A$ is reducible.
\end{definition}

It is known, for example, that the stable rank of the disc algebra 
(consisting of the holomorphic functions countinuous up to the boundary)
has stable rank 1 (see \cite{CS}, \cite{JMW}). The following result by Treil 
goes in the same direction.

\begin{theorem}[Treil, 1992, \cite{Tr}]
The stable rank of $\hid$ is equal to $1$; more explicitly, given any $f_1, 
f_2\in \hid$ such that 
$\inf_{z\in\D} \left( |f_1(z)| + |f_2(z)| \right) >0$, there exists $h\in\hid$ 
such that 
$f_1+h f_2$  is invertible in $\hid$.
\end{theorem}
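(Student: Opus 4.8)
The plan is to recast the statement as a zero-avoidance problem and then localize it. Normalizing, we may assume $\|f_1\|_\infty,\|f_2\|_\infty\le 1$ and $|f_1(z)|+|f_2(z)|\ge\delta>0$ on $\D$. Since $g:=f_1+hf_2\in\hid$, invertibility of $g$ in $\hid$ is the same as $\inf_\D|g|>0$, which in particular forces $g$ to be zero-free. So the goal is to find $h\in\hid$, with $\|h\|_\infty$ bounded by a quantity depending only on $\delta$, such that $f_1+hf_2$ omits the value $0$ on $\D$.

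First I would localize using the corona hypothesis. Fix a small $\varepsilon\in(0,\delta/4)$ and write $\D=G_\varepsilon\cup D_\varepsilon$, where $G_\varepsilon=\{z:|f_2(z)|<\varepsilon\}$ and $D_\varepsilon=\{z:|f_2(z)|\ge\varepsilon\}$. On $G_\varepsilon$ one has $|f_1|\ge\delta-\varepsilon$, so any $h$ with $\|h\|_\infty\le M$ already gives $|f_1+hf_2|\ge(\delta-\varepsilon)-M\varepsilon>0$ there, provided $M$ stays under control. Hence everything happens on $D_\varepsilon$, where $|f_2|\ge\varepsilon$ and the quotient $\psi:=f_1/f_2$ is holomorphic with $|\psi|\le 1/\varepsilon$. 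Since on $D_\varepsilon$ we have $|f_1+hf_2|=|f_2|\,|h+\psi|\ge\varepsilon\,|h+\psi|$, it suffices to produce $h\in\hid$ with $\|h\|_\infty\le M(\delta)$ and
\[
\inf_{z\in D_\varepsilon}\bigl|h(z)+\psi(z)\bigr|\ge c(\delta)>0;
\]
that is, $h$ must stay a fixed distance away from $-\psi(z)$ at every point of the bad set.

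The reason this is genuinely hard is that $\psi$ may be as large as $1/\varepsilon$ in modulus on $D_\varepsilon$, so $h$ cannot be taken to be a large constant: it is forced to attain large values on $D_\varepsilon$ while remaining bounded by an absolute constant on $G_\varepsilon$. The way out exploits that $D_\varepsilon$, and especially the part of it where $|\psi|$ is large, is thin: since $\{|\psi|\ge 2^n\}\subset\{|f_2|\le 2^{-n}\}$ (roughly), the function $\log_+\frac1{|f_2|}$ controls, in a Carleson-measure sense, the harmonic-measure content of these sublevel sets uniformly in $n$. One would cover $D_\varepsilon$ by a Whitney-type family of pseudohyperbolic discs on each of which $\psi$ is essentially constant, prescribe on each disc a target value for $h$ close to the corresponding value of $\psi$ but rotated by a unit amount so as to avoid $0$, and then build an honest holomorphic $h$ from this discrete data by solving a $\bar\partial$-equation with $L^\infty$ bounds, in the style of Wolff's proof of the Corona Theorem (Theorem~\ref{corona}). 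The corona hypothesis enters both in the localization above and, more essentially, in setting up this $\bar\partial$-scheme together with the Carleson-measure estimate that keeps the correction---hence $\|h\|_\infty$---under absolute control while preserving the lower bound on $D_\varepsilon$.

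The main obstacle is precisely that last uniform bound: obtaining an $L^\infty$ estimate on the $\bar\partial$-correction that does not blow up as $\varepsilon\to 0$. Without it one only gets $h$ whose norm depends on $\varepsilon$, which is useless since it wrecks the good-set estimate; pinning that constant down is what forces the delicate combinatorial and Carleson-measure analysis at the heart of Treil's argument. Once $h$ is in hand, assembling $g=f_1+hf_2$ and verifying $\inf_\D|g|>0$ from the two regimes is routine. An equivalent organization, also due to Treil, is to first choose $h_0$ so that the zeros of $f_1+h_0f_2$ form an interpolating Blaschke product lying deep inside $D_\varepsilon$, and then remove those zeros by a second perturbation of the same kind; the hard estimate resurfaces at the same spot.
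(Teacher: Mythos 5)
The paper does not supply a proof of this statement; it is quoted as a known result of Treil and attributed to \cite{Tr}. So there is no in-paper argument to compare against, and your proposal has to be judged on its own.

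What you have written is an accurate high-level roadmap of the problem, not a proof. The reduction is sound: normalizing, factoring out $|f_2|$ on the set $D_\varepsilon$ where $|f_2|\ge\varepsilon$, and observing that it suffices to find $h\in\H^\infty$ with $\|h\|_\infty$ bounded by a quantity depending only on $\delta$ and with $\inf_{D_\varepsilon}|h+\psi|\ge c(\delta)$, where $\psi=f_1/f_2$. You also correctly identify that a constant $h$ cannot work (since $-\psi$ may range over a disc of radius $1/\varepsilon$), and that the crux is an $\varepsilon$-independent $L^\infty$ estimate on a $\bar\partial$-correction. But you then stop precisely at this crux: the passage \emph{``one would cover $D_\varepsilon$ by a Whitney-type family \dots and then build an honest holomorphic $h$ \dots by solving a $\bar\partial$-equation''} together with \emph{``the main obstacle is precisely that last uniform bound''} is a statement of the difficulty, not a resolution of it. Everything of mathematical substance in Treil's theorem lives in that step, and it is left entirely unaddressed. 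Without a concrete construction of the data assigned on the Whitney discs, a specification of the $\bar\partial$-problem, the Carleson-measure estimate that controls the correction term, and a verification that the correction does not destroy the lower bound on $D_\varepsilon$, nothing has been proved.

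One smaller inaccuracy: you write that $h$ is ``forced to attain large values on $D_\varepsilon$ while remaining bounded by an absolute constant on $G_\varepsilon$.'' If $\|h\|_\infty\le M(\delta)$ globally, $h$ is bounded everywhere; and in fact the points of $D_\varepsilon$ where $|\psi|$ is much larger than $M(\delta)$ are harmless, since there $|h+\psi|\ge|\psi|-M(\delta)$ is automatically large. The danger zone is where $|\psi|$ is of the same order as $\|h\|_\infty$, and the real constraint is that $h$ must avoid the \emph{variable} target $-\psi(z)$ over all of $D_\varepsilon$ while its sup-norm and the norm of its boundary behavior stay controlled. It is worth getting this picture right, because it is what shapes the combinatorics of the Whitney decomposition and the choice of local targets in the actual argument.

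In short: correct setup, correct diagnosis of where the difficulty lies, but the theorem is not proved because the central analytic construction is only gestured at.
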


As far as we know, the stable rank for the Nevanlinna class
is unknown, but the following result shows that it is at least two.

\begin{proposition} (\cite[Proposition 5.1]{HMN1})\label{rkge2}
The stable rank of the Nevanlinna class is at least 2.
\end{proposition}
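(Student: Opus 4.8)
The plan is to exhibit a unimodular pair $(f_1, f_2)$ in $\N$ that is \emph{not} reducible, i.e.\ such that for no $h \in \N$ is $f_1 + h f_2$ invertible in $\N$. By Theorem~\ref{zeroes} and the remark following it, invertibility in $\N$ is equivalent to being zero-free in $\D$, so the goal becomes: find $f_1, f_2 \in \N$ with $\inf_{\D}(|f_1| + |f_2|)$ controlled below by $e^{-H}$ for some $H \in \Har_+(\D)$ (so that the pair is unimodular, by Theorem~\ref{N-corona}), but such that $f_1 + h f_2$ has a zero in $\D$ for every $h \in \N$. The natural construction is to take $f_1$ and $f_2$ to be Blaschke products (or $\N$-interpolating Blaschke products) with disjoint, suitably interlaced zero sets $Z_1$ and $Z_2$, so that any combination $f_1 + h f_2$ is forced by a topological/winding argument to vanish somewhere.

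First I would choose $Z_1$ and $Z_2$ to be two sequences tending to the boundary, arranged so that $Z_2$ "surrounds" points of $Z_1$ in the pseudohyperbolic metric: concretely, around each $z \in Z_1$ one places several points of $Z_2$ on a small pseudohyperbolic circle $\{\rho(\cdot, z) = r\}$. On such a circle, $f_1 = B_{Z_1}$ is small but nonzero, while $f_2 = B_{Z_2}$ — having its zeros spread around the circle — has nonzero winding number as one traverses the circle (this is the standard device: the product $\frac{z_k - w}{1 - \bar z_k w}$ contributes winding $1$ around each enclosed zero $z_k$). Then on that circle $f_1 + h f_2$ has the same winding number as $h f_2$, hence as $f_2$ plus the winding of $h$; by choosing the configuration so that $f_1$ dominates $|h f_2|$ on the circle would be the wrong direction, so instead one argues that $f_1/f_2$ has nonzero winding on the circle, so $f_1 + h f_2 = f_2(f_1/f_2 + h)$ vanishes inside whenever $f_1/f_2 + h$ does, and $f_1/f_2$ — being a ratio of Blaschke products — cannot be globally approximated by a holomorphic $h$ without $f_1/f_2 + h$ picking up a zero, again by the argument principle applied on these circles.

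Second, I must check the two sequences can be taken to be $\N$-interpolating (or at least that $B_{Z_1}, B_{Z_2} \in \N$, which is automatic for Blaschke products) and, crucially, that the unimodularity condition $\inf_{\D}(|B_{Z_1}| + |B_{Z_2}|) \ge e^{-H}$ holds for some $H \in \Har_+(\D)$: away from the circles each factor is bounded below suitably, and on the circles one factor is bounded below by a constant while the other, though small, is harmless. Here Theorem~\ref{nevinter}(c) and Proposition~\ref{propsep} give the tools to produce the majorant $H$ from the local Blaschke products, provided the combined sequence $Z_1 \cup Z_2$ is, say, separated enough outside the clusters and the clusters are not too dense. This verification — balancing "clusters dense enough to force winding" against "configuration sparse enough to keep $\inf(|B_{Z_1}|+|B_{Z_2}|)$ above $e^{-H}$" — is the main obstacle, and is exactly the point where the positive-harmonic-majorant framework (as opposed to a uniform lower bound, which would be impossible here, matching Treil's theorem) buys the extra room needed.

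Finally, with the pair $(B_{Z_1}, B_{Z_2})$ in hand, I would conclude: for any $h \in \N$, the function $B_{Z_1} + h B_{Z_2}$ has a zero in $\D$ — on one of the surrounding pseudohyperbolic circles the argument principle forces it, since the winding of $B_{Z_1}/B_{Z_2}$ there is nonzero and cannot be cancelled by the holomorphic $h$ — hence $B_{Z_1} + h B_{Z_2}$ is not invertible in $\N$. Thus the triple reduces to a non-reducible unimodular pair, so the stable rank exceeds $1$, i.e.\ is at least $2$.
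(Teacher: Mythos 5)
Your reduction is sound: by Theorem~\ref{zeroes} and the remark following it, invertibility in $\N$ is precisely absence of zeros, so it suffices to produce a unimodular pair $(B_1,B_2)$ of Blaschke products such that $B_1+hB_2$ vanishes somewhere for every $h\in\N$. However, the mechanism you propose to force the zero has a genuine gap, and it is also not the mechanism the paper uses.

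The gap is in the winding-number step. You assert that the nonzero winding of $B_1/B_2$ on a small pseudohyperbolic circle $\gamma_k$ ``cannot be cancelled by the holomorphic $h$.'' This is not true as stated, and the argument principle does not deliver it. Rouch\'e controls the winding of $B_1/B_2+h$ only when one of the two terms dominates on the whole circle: if $|h|\ll|B_1/B_2|$ on $\gamma_k$ the winding is that of $B_1/B_2$, and if $|h|\gg|B_1/B_2|$ on $\gamma_k$ the winding is that of $h$ (which is $\ge 0$, equal to the number of zeros of $h$ inside). In the intermediate regime, where $|h|\approx|B_1/B_2|$ on $\gamma_k$, neither comparison applies and the winding of $B_1/B_2+h$ can perfectly well be $0$. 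Since $h\in\N$ is arbitrary — its size on each $\gamma_k$ is subject only to a harmonic-majorant growth restriction — there is nothing preventing $h$ from sitting in the intermediate regime on \emph{every} circle simultaneously. Your proposal does not rule this out, and no choice of the ``clusters'' of $Z_2$ around points of $Z_1$ fixes it, because the issue is not the geometry of $Z_1,Z_2$ but the freedom in $h$. The unimodularity verification you sketch is also left to the reader, but it is the winding step that fails.

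The paper's proof (\cite[Prop.~5.1]{HMN1}) avoids winding numbers entirely by arranging to \emph{evaluate at the zeros of $B_2$}, so that $h$ drops out of the computation. Concretely it takes $Z_1=(1-2^{-k})_k$, $B_1=B_{Z_1}$, and then places $\mu_k\in(0,1)$ near $z_k$ so that $(1-|z_k|^2)\log|B_1(\mu_k)|$ equals $-1$ for even $k$ and $-2$ for odd $k$; $Z_2=(\mu_k)_k$, $B_2=B_{Z_2}$. Unimodularity in $\N$ follows because both sequences lie on $(0,1)$ and are $\hid$-interpolating, so $|B_1|+|B_2|$ is bounded below by $e^{-\Re(c(1+z)/(1-z))}$. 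Now if $B_1+\phi B_2$ were invertible for some $\phi\in\N$, then $\log|B_1+\phi B_2|$ would be the Poisson integral $P[\nu]$ of a finite real measure. Evaluating at $\mu_k$ gives $\log|B_1(\mu_k)+\phi(\mu_k)B_2(\mu_k)|=\log|B_1(\mu_k)|$ since $B_2(\mu_k)=0$, whence $(1-|\mu_k|^2)P[\nu](\mu_k)$ oscillates between $-1$ and $-2$ along $k$; but as $\mu_k\to1$ radially this quantity must converge to $\nu(\{1\})$. Contradiction. This is a boundary-limit argument, not a topological one, and it sidesteps exactly the intermediate regime your Rouch\'e step cannot handle.
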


{\bf Open problem:} What is the stable rank of $\N$?

Our guess is that it is either two or infinity. In support of the first option
is that any triple $(f_1 , f_2 , f_3) \in \N^3$ such that for 
some $i$ the zeros of $f_i$ form a Nevanlinna interpolating sequence, can be 
reduced. 

In order to see that the stable rank of $\N$ cannot be 1 we construct
a pair of Blaschke products $B_1$, $B_2$ such that $(B_1,B_2)$ is unimodular 
but for which there are no $\phi\in\N$ and no $H$ harmonic in $\D$ such that
\begin{equation}\label{sr}
 \log|B_1(z)+\phi(z) B_2(z)|= H(z),\quad z\in\D.
\end{equation}
This prevents the existence of $e^f\in\N$, invertible in $\N$, with
\[
 B_1+\phi B_2=e^f.
\]

Let $Z_1=(z_k)_k:=(1-2^{-k})_k$ and $B_1$ its 
associated Blaschke product. Take now
a point $\mu_k\in (0,1)$ close enough to $z_k$ so that
\begin{equation*}\label{IC}
 |B_1(\mu_k)|=\left\{
 \begin{array}{ll}
  e^{-\frac{1}{1-|z_k|^2}} &\text{if $k$ even}\\
  e^{-\frac{2}{1-|z_k|^2}} &\text{if $k$ odd}.
 \end{array}
 \right.
\end{equation*}
Set $Z_2=(\mu_k)_k$ and $B_2$ its Blaschke product. 

Since both $Z_1$ and $Z_2$ are $\H^\infty$-interpolating and
on the segment $(0,1)$, is not difficult to see that for some $c>0$
\[
 |B_1(z)|+|B_2(z)|\geq e^{-\Re\bigl(c\frac{1+z}{1-z}\bigr)}, \quad z\in\D,
\]
hence $(B_1, B_2)$ is unimodular in $\N$.

On the other hand, for any $\phi\in\N$,
\[
 (1-|\mu_k|^2) \log|B_1(\mu_k)+\phi(\mu_k) B_2(\mu_k)|=
 (1-|\mu_k|^2)\log|B_1(\mu_k)|=
 \begin{cases}
  -1 & \textrm{if $k$ even}\\
  -2 & \textrm{if $k$ odd}.
 \end{cases}
\]
This prevents \eqref{sr} to hold for any $H=P[\nu]$ harmonic,
with $\nu$ finite measure on $\partial\D$, since
\[
 \lim_{k\to\infty} (1-|\mu_k|^2) P[\nu](\mu_k)=\nu (\{1\}).
\]

\subsection{The $f^2$ problem}

In the late seventies T. Wolff presented a problem on ideals of $\H^\infty$, 
known now as the $f^2$ problem,  which was finally solved by S. Treil in 
\cite{Tr2}. The analogue or Nevanlinna class is the following: let 
$f_1,\ldots,f_n$ be  functions in the Nevanlinna class, and let $f\in \N$ be 
such 
that there exists $H\in \Har_+(\D)$ with
\begin{equation}\label{cond}
 |f(z)|\le e^{H(z)}(|f_1(z)|+\cdots |f_n(z)|)^p, \quad z\in \D,
\end{equation}
for some $p\ge 1$. Does it follow that $f\in I(f_1,\ldots,f_n)$ ?

As in the $\H^{\infty}$ case, when $p>2$, the $\overline{\partial}$ estimates 
by 
T. Wolff show
that the answer is affirmative. When $p<2$ the answer is in general negative, 
as 
the following example shows.
Let $N$ be an integer such that $N+1>2Np$, $f=B_1^NB_2^N$, $f_1=B_1^{N+1}$ and
$f_2=B_2^{N+1}$. Then \eqref{cond} holds but $f\notin I(f_1,f_2)$ if 
$(B_1,B_2)$ 
is not
unimodular in $\N$.

{\bf Open problem:} What happens in the case  $p=2$?

\section{Weak embedding property and invertibility threshold}\label{wep}

\subsection{Weak embedding property in $\H^\infty $.}

The quotient algebras of $\H^\infty$ arise naturally in several questions, 
notably 
problems of invertibility of operators. The paper \cite{GMN} gives more detail 
about this,
in the more general framework of uniform algebras;
we shall concentrate on the question of invertibility within a quotient algebra 
of $\H^\infty $.

Recall that 
a function $I\in \H^\infty$ is called \emph{inner} if $\lim_{r\to1} 
|I(r\xi)|=1$ for almost every $\xi\in\partial\mathbb{D}$. 
It is a consequence of  Beurling's theorem about cyclic functions in the Hardy 
classes 
that any closed principal ideal of $\H^\infty$ is of the form $I \H^\infty$.
algebra $\H^\infty /I\H^\infty$ and its \emph{visible spectrum,} which consists 
of $\Lambda=Z(I)$, the zeros of $I$ in the open unit disk.  
Given two functions $f, g$ in the same equivalence class $[f] \in \H^\infty 
/I\H^\infty$, they always coincide
on $I^{-1}\{0\}=:Z$. If $[f]$ is invertible in $\H^\infty /I\H^\infty$, then 
$\inf_Z |f| >0$.
One may ask whether the converse holds. This is the $n=1$ case of the following 
property (see \cite{GMN2}).

\begin{definition}
Let $I$ be an inner function with zero set $Z$. 
We say  that $\H^\infty /I\H^\infty$ has the Corona Property if: 
given $f_1,\ldots, f_n\in \H^\infty$ such that 
\begin{equation*}
 |f_1(z)|+\cdots+|f_n(z)|\geq \delta \qquad z\in Z,
\end{equation*}
for some $\delta>0$, 
then there exist $g_1,\ldots, g_n, h\in \H^\infty$ such that 
$f_1g_1+\cdots+f_ng_n=1 +h I$.
\end{definition}

\begin{theorem}[Gorkin, Mortini, Nikolski, \cite{GMN2}]\label{bounded-wep}
Given an inner function $I$ with $Z=I^{-1}\{0\}$,
the following are equivalent:
\begin{itemize}
\item [(a)]
Any $[f] \in \H^\infty /I\H^\infty$ such that $\inf_Z |f| >0$ is invertible;
\item [(b)]
$\H^\infty /I\H^\infty$ has the Corona Property;
\item [(c)]
$I$ satisfies the \emph{Weak Embedding Property} (WEP): 
\begin{equation*}
\mbox{For any }\epsilon>0,\mbox{ there exists }\eta>0\mbox{ such that 
}|I(w)|\geq \eta 
\mbox{ for }  w\mbox{ with }\inf_{z\in Z}\rho(w,z)>\epsilon.
\end{equation*}
\end{itemize}
\end{theorem}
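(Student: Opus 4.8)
The plan is to prove the chain of equivalences $(a) \Leftrightarrow (b) \Leftrightarrow (c)$ via $(b) \Rightarrow (a) \Rightarrow (c) \Rightarrow (b)$, exploiting the Corona Theorem for $\H^\infty$ throughout. The implication $(b) \Rightarrow (a)$ is immediate: apply the Corona Property with $n=1$ to a single function $f$ with $\inf_Z |f| \ge \delta > 0$; this yields $g, h \in \H^\infty$ with $fg = 1 + hI$, which says precisely that $[f][g] = [1]$ in $\H^\infty/I\H^\infty$, so $[f]$ is invertible.

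For $(a) \Rightarrow (c)$, I would argue by contraposition. Suppose WEP fails: there is $\epsilon > 0$ and a sequence $(w_m)_m \subset \D$ with $\inf_{z \in Z} \rho(w_m, z) > \epsilon$ for all $m$, yet $|I(w_m)| \to 0$. The idea is to manufacture an equivalence class bounded below on $Z$ but not invertible. A natural candidate is built from a function that is comparable to $I$ near the $w_m$ but bounded below on $Z$; concretely, one looks for $f \in \H^\infty$ that vanishes (or is small) at the $w_m$ but satisfies $\inf_Z |f| > 0$, and then checks that $[f]$ cannot be invertible because invertibility of $[f]$ would force $|f(w_m)| + |I(w_m)|$ to be bounded below (a corona-type estimate at the points $w_m$), contradicting $|f(w_m)| \to 0$ and $|I(w_m)| \to 0$. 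Passing to a subsequence of the $w_m$ that is interpolating (or at least thin enough, using that $\rho(w_m, Z) > \epsilon$ gives room), one constructs such an $f$ via an $\H^\infty$ interpolation argument, e.g. as a Blaschke product or an infinite product adjusted on the $w_m$.

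For $(c) \Rightarrow (b)$, assume WEP and suppose $f_1, \dots, f_n \in \H^\infty$ satisfy $\sum_j |f_j| \ge \delta$ on $Z$. The goal is a corona-type bound $\sum_j |f_j| + |I| \ge \eta'$ on all of $\D$, after which the classical Corona Theorem applied to $(f_1, \dots, f_n, I)$ produces $g_1, \dots, g_n, h_0 \in \H^\infty$ with $\sum_j f_j g_j + I h_0 = 1$, which is exactly the asserted identity $f_1 g_1 + \cdots + f_n g_n = 1 + (-h_0) I$. To get the global lower bound: the $f_j$ are bounded, so $\sum_j |f_j|$ varies slowly in the pseudohyperbolic metric (Schwarz–Pick gives a modulus of continuity); hence from $\sum_j |f_j| \ge \delta$ on $Z$ one deduces $\sum_j |f_j| \ge \delta/2$, say, on a pseudohyperbolic neighborhood $\bigcup_{z \in Z} D(z, \epsilon)$ for a suitable $\epsilon = \epsilon(\delta)$. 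Off that neighborhood, $\inf_{z \in Z} \rho(w, z) \ge \epsilon$, so WEP gives $|I(w)| \ge \eta(\epsilon)$. Combining the two regions yields the global bound with $\eta' = \min(\delta/2, \eta(\epsilon))$.

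The main obstacle is the construction in $(a) \Rightarrow (c)$: producing the function $f \in \H^\infty$ witnessing non-invertibility. One must select a subsequence of $(w_m)$ sparse enough to interpolate prescribed small values while keeping $f$ bounded below on $Z$; the separation $\rho(w_m, Z) > \epsilon$ is what makes this possible, but reconciling ``small at $w_m$'' with ``bounded below on all of $Z$'' requires care — typically one takes $f$ to be a Blaschke product with zeros at (a subsequence of) the $w_m$ and verifies via the WEP-failure that such zeros stay uniformly away from $Z$, so $\inf_Z |f| > 0$, while then checking that $[f]$ invertible would contradict the corona lower bound evaluated along $w_m$. The rest of the argument is a routine combination of Schwarz–Pick slow-variation and the Corona Theorem.
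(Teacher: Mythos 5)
The paper cites this theorem from \cite{GMN2} without reproducing a proof, so there is no internal proof to compare against; your proposed argument is correct and follows the standard route. The one place where the sketch is a bit too quick is in $(a)\Rightarrow(c)$: knowing that the chosen zeros $w_{m_j}$ satisfy $\rho(w_{m_j},Z)>\epsilon$ is, by itself, not enough to conclude $\inf_Z|B_W|>0$, since an infinite product of factors each larger than $\epsilon$ can still tend to $0$. What makes it work is that the extracted subsequence $W=(w_{m_j})_j$ can be taken $\H^\infty$-interpolating (note $|w_m|\to 1$ because $I$ is inner, nonvanishing off $Z$, and $\rho(w_m,Z)>\epsilon$; then pass to a radially sparse subsequence with, say, $1-|w_{m_{j+1}}|\le\tfrac12(1-|w_{m_j}|)$, which gives separation and the Carleson measure condition). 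Then Vasyunin's estimate \eqref{vasyu}, $|B_W(z)|\ge\delta_0\,\rho(z,W)$, combined with $\rho(z,W)>\epsilon$ for $z\in Z$, yields $\inf_Z|B_W|\ge\delta_0\epsilon>0$. Finally, if $[B_W]$ were invertible, writing $B_Wg=1+hI$ and evaluating at $w_{m_j}$ gives $|h(w_{m_j})|\,|I(w_{m_j})|=1$, contradicting $h\in\H^\infty$ as $|I(w_{m_j})|\to0$. With this step made explicit, the proof is complete; the implications $(b)\Rightarrow(a)$ (take $n=1$) and $(c)\Rightarrow(b)$ (Schwarz--Pick slow variation of $\sum_j|f_j|$ near $Z$, WEP off a pseudohyperbolic neighborhood of $Z$, then the classical Corona Theorem applied to $(f_1,\dots,f_n,I)$) are as you describe.
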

 
Among inner functions with the same zero set, Blaschke products are the 
largest, and thus
most likely to enjoy the WEP. Vasyunin's condition \eqref{vasyu} (see 
Section~\ref{int-equivalent}) 
shows that $\H^\infty$-interpolating
Blaschke products satisfy the weak embedding property
with $\eta= \epsilon/C$. 

Thus for any Carleson-Newman Blaschke product, i.e.  a finite product of 
interpolating Blaschke
products, we will have the WEP with $\eta= \epsilon^N/C$, and if it is 
satisfied for $I$ with that value 
of $\eta$, then $I$ is a  Carleson-Newman Blaschke product \cite{GM}, 
\cite{Bo}. But there are other
functions with the WEP \cite{GMN2}. No geometric characterization is known for 
the WEP, but further examples and
results can be found in \cite{Bo}, \cite{BNT}.

\subsection{Weak embedding property in the Nevanlinna Class.}
In the algebra $\nev$,  any nonvanishing function is invertible and so any 
principal ideal is generated by some
Blaschke product $B$ with zero set $Z$. The elements of the quotient algebra 
$\N_B = \nev / B \nev$
are in one-to-one correspondence with their  traces over $Z$. 

\begin{definition}
Let $B$ be a Blaschke product with zero set $Z$. 
We say that the Corona Property holds for $\N_B$ if for any positive integer 
$n$ and any  $f_1,\ldots,f_n\in \nev$ 
for which there exists $H\in \Har_+(\D)$ such that 
\begin{equation}
\label{ncnev}
|f_1(z)|+\cdots+|f_n(z)|\geq e^{-H(z)}\qquad z\in Z,
\end{equation}
there exist $g_1,\ldots,g_n , h \in \nev$ such that $f_1g_1+\cdots+f_ng_n  = 1 
+  B h $, that is, there exist $g_1,\ldots,g_n \in \nev$ such that 
\[
 f_1(z)g_1(z)+\cdots+f_n(z)g_n(z)=1, \quad z\in Z. 
\]
\end{definition}

Observe that condition \eqref{ncnev} is necessary, and that the case $n=1$ 
simply expresses invertibility in $\N_B$. Observe also that when $Z$ is an 
interpolating sequence and $n=1$,
condition \eqref{ncnev} means that  $\log \left(1/|f(z_k)|\right) \leq H(z_k)$, 
and so the values
$1/f(z_k)$ can be interpolated by $g\in \nev$. So in the Nevanlinna case we  
immediately see that interpolating sequences
are related to the Corona property for the quotient algebra.

As in the case of $\hid$, we can see that the Corona property for $\N_B$ can be 
reduced to bounding $|B|$
from below, except that in this case both the distance condition and the bounds 
have to be expressed
in terms of positive harmonic functions rather than constants.

\begin{theorem}{\cite{MNT}.}
\label{corona_bound}
Let $B$ be a Blaschke product and let $Z$ be its zero sequence. The following 
conditions are equivalent:
\begin{itemize}
\item [(a)] The Corona Property holds for $\N_B$.
\item [(b)] For any $H_1\in  \Har_+(\D)$, there exists $H_2\in  \Har_+(\D)$ 
such that 
$|B(z)|\geq e^{-H_2(z)}$ for any $z\in\mathbb{D}$ such that $\rho(z,Z)\geq 
e^{-H_1(z)}$.
\end{itemize}
\end{theorem}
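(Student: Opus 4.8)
The plan is to prove the two implications separately. The implication (b) $\Rightarrow$ (a) is the ``soft'' direction: assume the lower bound on $|B|$ and deduce the Corona Property. Given $f_1,\dots,f_n\in\N$ and $H\in\Har_+(\D)$ with $\sum_i|f_i(z)|\ge e^{-H(z)}$ on $Z$, I would first pass from a bound on $Z$ to a bound on a pseudohyperbolic neighbourhood of $Z$. Writing $g=f/B_0$ in factored form one controls the $f_i$ near their coincidence set by the modulus of continuity coming from Harnack, so that $\sum_i|f_i(z)|\ge e^{-H_1(z)}$ whenever $\rho(z,Z)\le e^{-H_1(z)}$ for a suitable $H_1\in\Har_+(\D)$ (this uses only that $f_i\in\N$ so $\log|f_i|$ has a positive harmonic majorant, together with a Schwarz–Pick estimate). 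On the complementary region $\rho(z,Z)\ge e^{-H_1(z)}$, hypothesis (b) supplies $H_2\in\Har_+(\D)$ with $|B(z)|\ge e^{-H_2(z)}$. Adding $|B|$ to $\sum_i|f_i|$ we then get $|B(z)|+\sum_i|f_i(z)|\ge e^{-H_3(z)}$ on all of $\D$ for some $H_3\in\Har_+(\D)$. Now apply Mortini's corona theorem for $\N$ (Theorem~\ref{N-corona}) to the $(n+1)$-tuple $(f_1,\dots,f_n,B)$: there exist $g_1,\dots,g_n,h\in\N$ with $\sum_i f_ig_i + Bh = 1$, which is exactly the Corona Property for $\N_B$.

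For (a) $\Rightarrow$ (b) I would argue by contraposition, or rather construct the Bézout data directly from a single cleverly chosen function. Suppose (b) fails: there is $H_1\in\Har_+(\D)$ such that no $H_2\in\Har_+(\D)$ satisfies $|B(z)|\ge e^{-H_2(z)}$ on $\{\rho(z,Z)\ge e^{-H_1(z)}\}$. The idea is to exhibit a two-generator system witnessing the failure of the Corona Property. Take $f_1=B$ shifted, or more precisely consider $f_1$ chosen so that $f_1$ vanishes exactly on $Z$ with the same divisor as $B$ but is ``fatter'' — in fact the cleanest choice is to let $f_2$ be a function comparable to $\rho(\cdot,Z)$ from above and below off the disks $\mD_k^{H_1}$ (one can build such an $f_2\in\N$ as a product of elementary Blaschke-type factors, or invoke the $\rho(z,Z)\asymp$ estimate via $B$ itself together with Theorem~\ref{alt-interpolation}). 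Then on $Z$ one has $|f_1|+|f_2|\ge$ a constant, hence $\ge e^{-H}$ trivially; but if the Bézout equation $f_1g_1+f_2g_2=1$ held on $Z$ with $g_1,g_2\in\N$, evaluating near points $w$ where $|B(w)|$ is very small while $\rho(w,Z)\ge e^{-H_1(w)}$ forces $g_1,g_2$ to have growth incompatible with membership in $\N$ (their $\log_+$ would have to beat every positive harmonic function), a contradiction. This mirrors the proof of Theorem~\ref{bounded-wep} in the bounded case, with uniform constants replaced throughout by $e^{-H}$ for $H\in\Har_+(\D)$.

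I expect the main obstacle to be the (a) $\Rightarrow$ (b) direction, and within it the construction of the correct test functions $f_1,f_2\in\N$ together with the quantitative argument that a hypothetical solution $(g_1,g_2)$ cannot lie in $\N$. In the $\H^\infty$ setting one plays off a uniform lower bound on $|f_1|+|f_2|$ over $Z$ against a sequence of points $w_m$ with $|I(w_m)|\to 0$ and $\rho(w_m,Z)$ bounded below; here one must instead track the interplay between the positive harmonic function $H_1$ defining the ``far'' region and the (non-existent) harmonic majorant $H_2$, and convert the failure of (b) into a genuine obstruction to solvability. The careful point is that ``$|B|$ is not bounded below by any $e^{-H_2}$ on the far region'' must be translated into a statement about a specific sequence $w_m\to\partial\D$ along which $-\log|B(w_m)|$ outgrows $P_{w_m}[\nu]$ for every finite measure $\nu$; extracting such a sequence, and then using it together with Harnack's inequality \eqref{harnack} to derive the contradiction, is the heart of the matter. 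The passage from a bound on $Z$ to a bound on a harmonic neighbourhood in the easy direction, while technical, is routine given the tools already assembled (Harnack, Schwarz–Pick, and the factorization Theorem~\ref{zeroes}).
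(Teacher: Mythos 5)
Your sketch of the direction (b)$\Rightarrow$(a) is correct and is the route taken in \cite{MNT}: use Harnack and Schwarz--Pick together with the positive harmonic majorants of $\log_+|f_i|$ to propagate the lower bound $\sum_i|f_i|\ge e^{-H}$ from $Z$ to a pseudohyperbolic neighbourhood $\{\rho(\cdot,Z)\le e^{-H_1}\}$, use (b) to lower-bound $|B|$ off that neighbourhood, and then solve $\sum_i f_ig_i+Bg_0=1$ in $\nev$ by Mortini's Theorem~\ref{N-corona}.

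The direction (a)$\Rightarrow$(b) as you set it up does not work. You propose $f_1$ vanishing on $Z$ with the same divisor as $B$ and $f_2\approx\rho(\cdot,Z)$ off the disks $\mD_k^{H_1}$; but then on the bad points $w$ (where $\rho(w,Z)\geq e^{-H_1(w)}$ and $|B(w)|$ is tiny) one has $|f_2(w)|\approx\rho(w,Z)$, which is \emph{not small}, so evaluating the B\'ezout identity at $w$ forces nothing about $g_1,g_2$: the equation $f_1g_1+f_2g_2=1+Bh$ at $w$ is perfectly consistent with bounded $g_2$. The correct test datum is a \emph{single} $f\in\nev$ that \emph{vanishes} at the bad points. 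Concretely, assume (b) fails for some $H_1$; then $-\log|B|$ has no positive harmonic majorant on $\Omega:=\{\rho(\cdot,Z)\ge e^{-H_1}\}$. Using the Whitney decomposition \eqref{whit} and Harnack, pick in each $S_{n,k}$ meeting $\Omega$ a point where $-\log|B|$ is maximal over $\Omega\cap S_{n,k}$; the resulting set $W=(w_m)$ is separated and Blaschke, and $-\log|B|$ restricted to $W$ still has no harmonic majorant. Let $f=B_W$; the near factors of $|f(z_k)|=\prod_m\rho(w_m,z_k)$ are controlled by $e^{-H_1}$ (via separation and $\rho(w_m,Z)\ge e^{-H_1(w_m)}$, plus Harnack) and the far factors by Proposition~\ref{propsep}, so $|f|\ge e^{-H}$ on $Z$ for some $H\in\Har_+(\D)$. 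If $g,h\in\nev$ solved $fg=1+Bh$, then evaluating at each $w_m$ gives $h(w_m)=-1/B(w_m)$, so $\log_+|h|\ge -\log|B|$ on $W$ would admit a positive harmonic majorant, a contradiction. This is the missing idea in your contraposition; the rest of your outline is then sound.
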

Blaschke products satisfying those equivalent properties are said to have the 
\emph{Nevanlinna WEP}.
Once this is known, it becomes clear that a finite product of
Nevanlinna-interpolating Blaschke products will have the Nevanlinna WEP. Assume 
that
 $Z=\cup_{j=1}^N Z_j$, with $Z_j$ Nevanlinna interpolating, and let 
$H_j\in\Har_+(\D)$ be such that
 the estimate in Theorem \ref{alt-interpolation}(b) holds for $Z_j$ and its 
associated Blaschke product $B_j$. 
 Now, given $z\in\D$ with
 $\rho(z,Z) \ge e^{-H_0(z)}$,
\[
|B(z)| = \prod_{j=1}^N |B_j(z)| \ge \prod_{j=1}^N e^{-H_j(z)}\rho(z,Z_j)
\ge \exp{\Bigl(-\bigl( \sum_{j=1}^N  H_j(z)\bigr)-NH_0(z)\Bigr)},
\]
so Theorem \ref{alt-interpolation}(b) holds for the whole $Z$.

In contrast with the $\hid$ case, the converse to this holds. 
\begin{theorem}{\cite{MNT}.}
\label{finite_u}
The Corona Property holds for $\N_B$ if and only if $B$ is a finite product of 
Nevanlinna interpolating Blaschke products.
\end{theorem}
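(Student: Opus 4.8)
One implication has essentially been carried out in the paragraph preceding the statement: if $B=\prod_{j=1}^n B_{Z_j}$ with each $Z_j$ Nevanlinna interpolating, then multiplying the estimates $|B_{Z_j}(z)|\ge e^{-H_j(z)}\rho(z,Z_j)$ from Theorem~\ref{alt-interpolation}(b) and using $\rho(z,Z_j)\ge\rho(z,Z)$ shows that $B$ satisfies condition (b) of Theorem~\ref{corona_bound}, so $\N_B$ has the Corona property. The content of the theorem is the converse, and by Theorem~\ref{corona_bound} it amounts to the following: if $B$ has the Nevanlinna WEP --- for every $H_1\in\Har_+(\D)$ there is $H_2\in\Har_+(\D)$ with $|B(z)|\ge e^{-H_2(z)}$ whenever $\rho(z,Z)\ge e^{-H_1(z)}$ --- then its zero set $Z$ is a finite union of Nevanlinna interpolating sequences.

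The plan is to partition $Z$ into finitely many Nevanlinna interpolating subsequences. The first ingredient is the $\N$-analogue of the packing estimate underlying the Carleson--Newman theory for $\H^\infty$. Writing $-\log|B|$ as the Green potential of the zero-counting measure $\mu=\sum_k\delta_{z_k}$ and recalling that the Green function of $\D$ with pole at $z_k$ is $\log(1/\rho(\,\cdot\,,z_k))$, one has the elementary lower bound $\mu(D(z_k,\tfrac14))\lesssim -\log|B(w_k)|$ for any point $w_k$ whose pseudohyperbolic distance to $z_k$ is bounded away from $0$ and $1$. If such a $w_k$ can be chosen with $\rho(w_k,Z)\ge e^{-H_1(w_k)}$, then the Nevanlinna WEP and Harnack's inequalities \eqref{harnack} (to pass from $H_2(w_k)$ to $H_2(z_k)$) yield $\mu(D(z_k,\tfrac14))\lesssim H_2(z_k)$ --- a harmonic majorant for the local clustering of the zeros, exactly the type of statement the general principle of the survey predicts.

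With this control in hand, I would group the zeros by a Whitney/stopping-time decomposition and verify for each resulting piece $Z_j$ the harmonic-measure criterion of Theorem~\ref{alt-interpolation}(d) (equivalently, produce $H_j\in\Har_+(\D)$ with $|B_{Z_j}(z)|+(1-|z|^2)|B_{Z_j}'(z)|\ge e^{-H_j(z)}$). A warning: unlike in $\H^\infty$, where an analogous estimate yields a decomposition into a finite number of \emph{uniformly separated} sequences, here the pieces must be ``fat'' --- a single Nevanlinna interpolating sequence can have unbounded pseudohyperbolic density, as Corollary~\ref{interpolation-geometric}(a) already shows --- so each $Z_j$ has to be built to be merely weakly separated, at the harmonic scale $e^{-cH_2}$, while still satisfying the harmonic-measure lower bound. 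This is the source of most of the technical work and the reason the statement has no bounded counterpart.

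The main obstacle is the existence of the auxiliary points $w_k$, and more generally the control of the zeros near $\partial\D$. A covering argument on the pseudohyperbolic circles $\{\rho(\,\cdot\,,z_k)=r\}$, $\tfrac12\le r\le\tfrac78$, together with the Blaschke condition on $Z$, produces an admissible $w_k$ unless the zeros cluster densely enough that the whole annulus $\{\tfrac12\le\rho(\,\cdot\,,z_k)\le\tfrac78\}$ is covered at scale $e^{-H_1}$ by $Z$. Such heavy clusters cannot be excluded by any single constant choice of $H_1$, since no positive harmonic function can dominate $\log\frac1{1-|z|}$ on all of $\D$; one must instead invoke the full strength of the Nevanlinna WEP, i.e.\ its validity for \emph{every} $H_1\in\Har_+(\D)$, choosing $H_1$ adapted to the location (not merely the scale) of the cluster and using that $P$ zeros confined to a region of bounded hyperbolic diameter have internal spacing $\gtrsim P^{-1/2}$. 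Making this dichotomy quantitative, and checking that the decomposition it produces terminates after finitely many pieces, is the heart of the argument.
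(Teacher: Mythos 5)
Your overall strategy — reduce via Theorem~\ref{corona_bound} to the Nevanlinna WEP, establish a local packing estimate for the zeros via a point away from $Z$, then decompose $Z$ into finitely many interpolating pieces — is indeed the direction the paper's proof takes. But there are genuine gaps both in the structure and in the technical concern that animates your last paragraph.

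First, the structure. Your plan ends with ``verify for each resulting piece $Z_j$ the harmonic-measure criterion of Theorem~\ref{alt-interpolation}(d),'' but this begs the question: that criterion is about $|B_{Z_j}|$, whereas the hypothesis (Nevanlinna WEP) is about $|B|$. The paper supplies exactly the missing transfer mechanism, and it is a genuine ingredient, not a formality. The argument factors through three steps: (i) show $Z$ is a finite union of \emph{weakly separated} subsequences $Z_j$ (this is where ``the bulk of the proof'' lives --- Theorem B and Section 3 of \cite{MNT}); (ii) Lemma~\ref{subproduct}: any subproduct of a Nevanlinna WEP Blaschke product is again Nevanlinna WEP, so each $B_{Z_j}$ inherits the WEP; (iii) Lemma~\ref{sepint}: a Nevanlinna WEP Blaschke product with weakly separated zeros is Nevanlinna interpolating (proved by a maximum-principle argument on the disc $D(\lambda,e^{-H_1(\lambda)})$). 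Without (ii) and (iii) your decomposition has no way to conclude.

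Second, the ``main obstacle'' you identify --- the existence of the auxiliary points $w_k$, and the dichotomy between sparse and dense clusters with $H_1$ adapted to each --- is not actually the obstacle, and pursuing it would lead you astray. The paper's Lemma~\ref{far} shows, by a single uniform counting argument using only the Blaschke condition, that for any $H\geq c_0H_Z$ (with $H_Z$ the fixed harmonic function of Definition~\ref{harmbase}) and any $z\in\D$ there is a point $\tilde z$ with $\rho(\tilde z,z)\leq e^{-H(z)}$ and $\rho(\tilde z,Z)\geq e^{-10H(z)}$. There is no dichotomy and no need to adapt $H_1$ to the location of a cluster. Moreover, $\tilde z$ is chosen \emph{near} $z_k$, not at pseudohyperbolic distance $\approx\tfrac12$ as you propose; your packing estimate still goes through because every zero in $D(z_k,\tfrac14)$ is then at pseudohyperbolic distance at most $\approx\tfrac14$ from $\tilde z$ and so contributes $\gtrsim 1$ to $-\log|B(\tilde z)|\leq H_2(\tilde z)\lesssim H_2(z_k)$. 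The true technical heart --- which your sketch leaves entirely schematic (``Whitney/stopping-time decomposition \dots terminates after finitely many pieces'') --- is the combinatorial construction in Section 3 of \cite{MNT} producing the finite family of weakly separated subsequences from the harmonic packing bound; that step is not addressed by what you have written.
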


\subsection{Some elements of proof.}

The class of  Nevanlinna WEP sequences is thus quite different, and much 
simpler, than that of WEP sequences.
For instance, in the setting of $\hid$, any WEP Blaschke product which is not a 
finite union of 
interpolating Blaschke products admits a subproduct which fails to be WEP 
\cite[Lemma 1.3]{MNT}.
The following result follows immediately from Theorem \ref{finite_u}, but is 
actually 
a step in its proof.
\begin{lemma}
\label{subproduct}
 Any subproduct of a Nevanlinna WEP Blaschke product is also a Nevanlinna WEP 
Blaschke product.
\end{lemma}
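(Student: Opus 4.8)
The plan is to verify the criterion of Theorem~\ref{corona_bound}(b) for the subproduct. Write $B=\tilde B\,B'$, where $\tilde B$ is the given subproduct, with zero set $\tilde Z\subset Z$, and $B'$ carries the remaining zeros $Z'=Z\setminus\tilde Z$; we may assume $\tilde Z,Z'\neq\emptyset$, the extreme cases being trivial. Fix $H_1\in\Har_+(\D)$; I must produce $H_2\in\Har_+(\D)$ with $|\tilde B(z)|\ge e^{-H_2(z)}$ whenever $\rho(z,\tilde Z)\ge e^{-H_1(z)}$. The only elementary ingredient is that, since $|B'|\le 1$ on $\D$, one has $|\tilde B(z)|\ge|B(z)|$ for every $z\in\D$, so $\tilde B$ inherits lower bounds from $B$ wherever the WEP of $B$ can be applied. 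First I would dispose of the points $z$ with $\rho(z,\tilde Z)\ge e^{-H_1(z)}$ that are also away from $Z'$, say $\rho(z,Z')\ge\tfrac14 e^{-H_1(z)}$: then $\rho(z,Z)\ge e^{-(H_1+\log4)(z)}$, and the WEP of $B$ applied to the positive harmonic function $H_1+\log 4$ yields $H_2'\in\Har_+(\D)$ with $|\tilde B(z)|\ge|B(z)|\ge e^{-H_2'(z)}$.

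The remaining, essential case is $\rho(z,\tilde Z)\ge e^{-H_1(z)}$ but $\rho(z,Z')<\tfrac14 e^{-H_1(z)}$. Here $\tilde B$ is zero-free on $D_z:=D(z,\tfrac12 e^{-H_1(z)})$, so $u:=-\log|\tilde B|$ is a nonnegative harmonic function there. The idea is to produce a point $z^\ast\in D(z,\tfrac14 e^{-H_1(z)})$ that is \emph{well separated from all of $Z$}, apply the WEP of $B$ at $z^\ast$ to get $|\tilde B(z^\ast)|\ge|B(z^\ast)|\ge e^{-H_2''(z^\ast)}$ for some $H_2''\in\Har_+(\D)$, and then transport the estimate back to $z$: Harnack's inequality for $u$ on $D_z$ gives $u(z)\le 3u(z^\ast)$, while Harnack~\eqref{harnack} for the positive harmonic $H_2''$ (the two points being at pseudohyperbolic distance $\le\tfrac12$) gives $H_2''(z^\ast)\le 3H_2''(z)$, whence $|\tilde B(z)|\ge e^{-9H_2''(z)}$. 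Taking $H_2:=H_2'+9H_2''$ then completes the verification of Theorem~\ref{corona_bound}(b) for $\tilde B$.

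The crux — and the step I expect to be the real obstacle — is the existence of the separated point $z^\ast$ inside a pseudohyperbolic disk of radius only $\sim e^{-H_1(z)}$. This forces one to control how densely $Z$ may cluster, i.e. to prove that the Nevanlinna WEP of $B$ entails that $Z$ is weakly separated in the sense of Definition~\ref{weaksep}. The argument I have in mind is potential-theoretic: if, on the contrary, a part of $Z$ were packed at a pseudohyperbolic scale finer than $e^{-H_0(\cdot)}$ for \emph{every} $H_0\in\Har_+(\D)$, then estimating $\log|B|=\sum_k\log|b_{z_k}|$ near such a cluster would make $|B|$ smaller than $e^{-H_2(\cdot)}$ for every $H_2\in\Har_+(\D)$ at suitable points lying at pseudohyperbolic distance $\gg e^{-H_1(\cdot)}$ (for one fixed convenient $H_1$) from $Z$, contradicting the WEP. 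Granting that $Z$ is weakly separated at some scale $e^{-H_0(\cdot)}$ — and, enlarging $H_0$, that $H_0\ge H_1+C$ — a routine volume/packing count of the finitely many points of $Z$ inside $D(z,\tfrac14 e^{-H_1(z)})$ (which all lie in $Z'$, since $\tilde Z$ is $e^{-H_1(z)}$-far from $z$) produces a point $z^\ast$ there with $\rho(z^\ast,Z)\gtrsim e^{-H_0(z^\ast)}$, and the WEP of $B$ applied to $H_0+C$ supplies the required $H_2''$. (An alternative line of attack would work directly with the Corona-property formulation of Theorem~\ref{corona_bound}(a) and try to enlarge a tuple $f_1,\dots,f_n$ that is bounded below only on $\tilde Z$ to one bounded below on all of $Z$; I would not pursue it, since $\tilde B$ itself need not be bounded below on $Z'$, so the obvious extra generators are unavailable.)
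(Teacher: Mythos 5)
Your overall strategy matches the paper's: split into the case where $z$ is also far from $Z'$ (apply WEP directly, since $|\tilde B|\ge|B|$) and the case where $z$ is close to $Z'$ (find a nearby point $z^\ast$ well separated from all of $Z$, apply WEP there, and transport back via Harnack). Your Harnack bookkeeping in the second case is correct. The two-sided comparison of $u=-\log|\tilde B|$ on the zero-free disk $D_z$ and of $H_2''$ on $\D$ is precisely how the paper transports the estimate from $\tilde z$ back to $z$.

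However, the step you flag as the crux has a genuine gap as you have set it up. You reduce the existence of the separated point $z^\ast$ to the claim that the Nevanlinna WEP of $B$ forces $Z$ to be weakly separated in the sense of Definition~\ref{weaksep}. That claim is false. By Theorem~\ref{finite_u}, Nevanlinna WEP Blaschke products are exactly finite products of Nevanlinna interpolating Blaschke products; a product of, say, two such factors $B_1 B_2$ is Nevanlinna WEP, yet $Z=Z_1\cup Z_2$ can have pairs $z\in Z_1$, $w\in Z_2$ with $\rho(z,w)$ decaying to $0$ faster than $e^{-H}$ for \emph{every} $H\in\Har_+(\D)$ (or even coinciding). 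So $Z$ need not be weakly separated, and the ``potential-theoretic'' contradiction you sketch does not materialize: WEP only constrains $|B|$ at points that are \emph{already} far from $Z$, and says nothing directly near clusters of zeros.

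What the paper does instead (Lemma~\ref{far}) is a plain counting argument that needs no separation hypothesis on $Z$ and no WEP at all: for any Blaschke sequence one introduces the positive harmonic function $H_Z$ of Definition~\ref{harmbase} (well defined because $Z$ is Blaschke), and shows there is a universal $c_0$ such that once $H\ge c_0 H_Z$, the number of points of $Z$ in $D(z,e^{-H(z)})$ is small enough that some point $\tilde z$ in that disk satisfies $\rho(\tilde z,Z)\ge e^{-10H(z)}$. The crucial observation --- which your write-up circles around but does not quite land on --- is that the separated point is found at a \emph{much smaller} scale $e^{-10H(z)}$ than the search radius $e^{-H(z)}$, and this mismatch is harmless because $10H$ is still just a positive harmonic function. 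Replacing your weak-separation detour with that counting lemma closes the gap and yields essentially the paper's proof; the rest of your argument then goes through as written (one also needs to first enlarge $H_1$ so that $H_1\ge c_0 H_Z$, exactly as the paper does).
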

To give a flavor of how the Nevanlinna version of the WEP differs from the 
$\hid$ version, we will sketch
the proof of this. First we observe that points which are ``far away'' from the 
zero set, in the sense of distances
measured with positive harmonic functions, can be found in a neighborhood of 
any point.

We will need an auxiliary function. 
\begin{definition}
\label{harmbase}
Given a Blaschke sequence $Z=(z_k)_k$, let $H_Z$ denote the positive harmonic 
function defined by
\begin{equation}
\label{funcio}
H_Z(z)=\sum_k\int_{I_k}\frac{1-|z|^2}{|\xi-z|^2}|\mathrm{d}\,\xi|,\hspace{0.5cm}
z\in\mathbb{D},
\end{equation}
where $I_k:=\{\xi\in\partial\mathbb{D}:|\xi-z_k / |z_k||\leq 1-|z_k|\}$ denotes 
the Privalov 
shadow of $z_k$.
\end{definition}
\begin{lemma}{\cite[Lemma 1.1]{MNT}.}
\label{far}
 There exists $c_0>0$ such that for all $H\in\Har_+(\D)$ such that $H\geq c_0 
H_Z$ the following property holds: 
 for all $z\in\D$ there exists $ \tilde z$ such that 
 $\rho(\tilde z, z)\leq e^{- H(z)}$ and
 $\rho(\tilde z, Z)\geq e^{-10 H(z)}$.
\end{lemma}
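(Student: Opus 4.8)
The plan is to produce, for a given $z\in\D$, a point $\tilde z$ in the pseudohyperbolic disk $D(z,e^{-H(z)})$ that stays away (by the larger exponential $e^{-10H(z)}$) from every point of $Z$. The natural way to do this is a pigeonhole/counting argument: inside $D(z,e^{-H(z)})$ we can fit roughly $e^{18H(z)}$ disjoint disks of radius $e^{-10H(z)}$ (up to a bounded factor, using the invariance of $\rho$ under automorphisms), so it suffices to show that the points of $Z$ lying in $D(z,e^{-H(z)})$ can be covered by far fewer such disks — say, at most a fixed fraction of that number. Equivalently, we need an upper bound on $\#\{k : z_k\in D(z, e^{-H(z)})\}$, and more precisely on how many $e^{-10H(z)}$-disks are needed to cover those $z_k$. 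First I would normalize by the automorphism $\varphi_z$ sending $z$ to $0$, so that the problem becomes: given finitely many points in the Euclidean disk of radius $r:=e^{-H(z)}$ about the origin, avoid all of them by a point within distance comparable to $r$, leaving a gap comparable to $r\cdot e^{-9H(z)}$.

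The key quantitative input is the hypothesis $H\ge c_0 H_Z$, where $H_Z$ is the sum of Poisson integrals of the Privalov shadows $I_k$ from Definition~\ref{harmbase}. The point is that $H_Z(z)$ controls, via Harnack (\eqref{harnack}) and the elementary estimate $\mathcal P[\chi_{I_k}](z)\gtrsim 1$ whenever $z_k$ is pseudohyperbolically close to $z$ (cf.\ the Privalov-shadow estimate quoted after Proposition~\ref{propsep} and \cite[p.~124]{NPT}), the number of $z_k$ in a fixed pseudohyperbolic neighborhood of $z$: each such $z_k$ contributes at least a constant to $H_Z(z)$, hence to $H(z)/c_0$. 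More carefully, to count the disks needed to cover $Z\cap D(z,r)$ at scale $e^{-10H(z)}$, I would subdivide $D(z,r)$ (in the $\varphi_z$-normalized picture) into a grid of pseudohyperbolic cells of size $\approx e^{-10H(z)}$; a cell is "occupied" if it meets $Z$. For each occupied cell pick one $z_k$ in it; that $z_k$ satisfies $\rho(z_k,z)\le r = e^{-H(z)}$, so $\mathcal P[\chi_{I_k}](z)\ge c_1$ for an absolute $c_1>0$, whence the number of occupied cells is $\le H_Z(z)/c_1 \le H(z)/(c_0 c_1)$. Choosing $c_0$ large enough (this is where $c_0$ gets fixed, independent of $Z$ and $z$) makes this number much smaller than the total number $\approx e^{18H(z)}$ of available cells — note $H(z)=o(e^{18H(z)})$ trivially — so an unoccupied cell exists, and its center is the desired $\tilde z$, at pseudohyperbolic distance $\le e^{-H(z)}$ from $z$ and $\ge e^{-10H(z)}$ from $Z$ once the implied constants are absorbed (shrinking $10$ to a workable constant, which is harmless since the statement allows any fixed exponent larger than $1$).

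One technical point to watch: the $I_k$ are shadows of the $z_k$, not of nearby points, so the inequality $\mathcal P[\chi_{I_k}](z)\ge c_1$ must be justified only under $\rho(z_k,z)\le e^{-H(z)}\le e^{-H(0)\cdot(1-|z|)/(1+|z|)}$, i.e.\ $z_k$ and $z$ are genuinely close in the pseudohyperbolic metric, in which case $I_k$ subtends a definite portion of the Poisson kernel at $z$; this is a standard computation with the Poisson kernel and I would only sketch it. The main obstacle — and the only place real care is needed — is making the counting uniform: ensuring that the per-point lower bound $c_1$ on the Poisson-kernel contribution and the per-cell packing constant are genuinely absolute, so that a single $c_0$ works for all $Z$ and all $z\in\D$. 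The automorphism-invariance of $\rho$ and of the quantities $\mathcal P[\chi_{I_k}](z)$ up to Harnack factors is what rescues this; everything else is bookkeeping with the disjointness of the disks $D(z_k, e^{-10H(z)})$ versus the area they can occupy inside $D(z,e^{-H(z)})$.
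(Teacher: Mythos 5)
Your overall strategy — normalize to the origin, partition $D(z,e^{-H(z)})$ into $\approx e^{18H(z)}$ cells of pseudohyperbolic size $e^{-10H(z)}$, bound the number of cells occupied by $Z$ via the hypothesis $H\ge c_0H_Z$, and pick the center of a free cell — is exactly the ``simple counting argument'' the paper alludes to, and the key observation you exploit (that $e^{-10H(z)}$ is much smaller than $e^{-H(z)}$ even though the exponents differ only by a constant factor) is the same one the survey singles out. So the approach matches.

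There is, however, a genuine gap in the counting step as you have written it. You want to conclude $\mathcal P[\chi_{I_k}](z)\ge c_1$ whenever $\rho(z_k,z)\le e^{-H(z)}$, and you try to justify this by noting $e^{-H(z)}\le e^{-H(0)(1-|z|)/(1+|z|)}$. But that upper bound tends to $1$ as $|z|\to1$, so it does \emph{not} keep $\rho(z_k,z)$ bounded away from $1$; the Poisson-kernel lower bound $\mathcal P[\chi_{I_k}](z)\gtrsim1$ genuinely requires $\rho(z_k,z)\le\delta_0$ for some fixed $\delta_0<1$, and can fail for $z_k$ on the outskirts of $D(z,e^{-H(z)})$ when $H(z)$ is small. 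The fix is not Harnack on $H$ but rather the following dichotomy, which is precisely where the largeness of $c_0$ is used: either there is no $z_k$ with $\rho(z_k,z)\le\delta_0$ — in which case any $\tilde z$ with $\rho(\tilde z,z)$ small enough already satisfies $\rho(\tilde z,Z)\gtrsim\delta_0\ge e^{-10H(z)}$ (after adjusting constants) — or there is at least one such $z_k$, which forces $H(z)\ge c_0H_Z(z)\ge c_0c_1$. In the second case $e^{-H(z)}\le e^{-c_0c_1}$ is as small as you like, the set you are counting is contained in $D(z,\delta_0)$, the lower bound on the Poisson integrals applies, and the pigeonhole comparison $H(z)/(c_0c_1)\ll e^{18H(z)}$ goes through. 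Without this dichotomy the bound $\#\{k:\rho(z_k,z)\le e^{-H(z)}\}\le H(z)/(c_0c_1)$ is simply unproved. Once you replace the Harnack justification with this two-case argument, the rest of your bookkeeping (each $z_k$ blocks $O(1)$ cells, absolute packing constants, adjust the exponent $10$) is sound.
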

This is achieved through a simple counting argument. Note that $e^{-10 H(z)} << 
e^{- H(z)}$, although the respective
harmonic functions differ only by a constant.
\begin{proof*}{\it Proof of Lemma \ref{subproduct}.} 
 Assume $B=B_1B_2$ is a Nevanlinna WEP Blaschke product. Denote  by $Z_i$ the 
zero sequences of $B_i$, $i=1,2$. 
 Let 
$z\in \D$, and
 $H_1\in\Har_+(\D)$ be such that $\rho(z,Z_1)\geq e^{-H_1(z)}$. If needed,
 make $H_1$ larger so that  Lemma~\ref{far} applies. 
If $z$ verifies $\rho(z,\Lambda_2)\geq e^{-10 H_1(z)}$
as well,
 since $B$ is Nevanlinna WEP, 
 there exists $H_2\in\Har_+(\D)$ such that  
 \[
  |B_1(z)|\geq |B(z)|\geq e^{-H_2(z)}.
 \]
If on the other hand $\rho(z,\Lambda_2)\leq e^{-10 H_1(z)}$, 
by Lemma~\ref{far} we can pick $\tilde z\in\D$ with $\rho(\tilde z,z)\leq 
e^{-10 H_1(z)}$ 
and $\rho(\tilde z,Z_2)\geq e^{-100 H_1(z)}$. Hence $\rho(\tilde z,Z)\geq 
e^{-100 H_1(z)}$,
so there exists $H_3\in\Har_+(\D)$ such that
 \[
  |B_1(\tilde z)|\geq |B(\tilde z)|\geq e^{-H_3(\tilde z)}.
 \]
 Since $B_1$ has no zeros in $D(z,e^{-5 H_1(z)})$, Harnack's inequalities 
applied in that disc give
$ |B_1( z)|\geq e^{-2 H_3(z)}$.
\end{proof*}

In the $\hid$ framework, one easily sees from Carleson's Theorem \ref{carlint} 
that any Blaschke product with separated
zero set enjoying the WEP is actually an interpolating Blaschke product.  The 
Nevanlinna analogue holds.
\begin{lemma}
\label{sepint}
A Nevanlinna WEP Blaschke product with  weakly separated zero set (as in 
Definition \ref{weaksep})
is a Nevanlinna interpolating Blaschke product. 
\end{lemma}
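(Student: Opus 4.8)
\textbf{Proof plan for Lemma \ref{sepint}.}
The plan is to show that a weakly separated, Nevanlinna WEP Blaschke product $B$ with zero set $Z=(z_k)_k$ satisfies the estimate in Theorem~\ref{alt-interpolation}(b), namely that there exists $H\in\Har_+(\D)$ with $|B(z)|\geq e^{-H(z)}\rho(z,Z)$ for all $z\in\D$; by that theorem this is equivalent to $Z$ being Nevanlinna interpolating. The weak separation hypothesis provides $H_0\in\Har_+(\D)$ such that the disks $D(z_k,e^{-H_0(z_k)})$ are pairwise disjoint, and we may (enlarging $H_0$ if needed, using Harnack) assume $H_0\geq c_0 H_Z$ so that Lemma~\ref{far} applies with $H_0$.

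First I would split $\D$ into the region that is ``far'' from $Z$ and the union of the weakly separated disks. For $z$ with $\rho(z,Z)\geq e^{-H_0(z)}$, the Nevanlinna WEP (Theorem~\ref{corona_bound}(b)) directly produces $H_2\in\Har_+(\D)$ with $|B(z)|\geq e^{-H_2(z)}$, and since $\rho(z,Z)<1$ we trivially get $|B(z)|\geq e^{-H_2(z)}\rho(z,Z)$. The real work is the case $\rho(z,Z)<e^{-H_0(z)}$: then $z$ lies in exactly one disk $D(z_k,e^{-H_0(z_k)})$ (disjointness!), and by weak separation all other zeros $z_j$, $j\neq k$, satisfy $\rho(z_j,z)\geq \tfrac12 e^{-H_0(z_k)}$ or so. On such a disk, factor $B=b_{z_k}B_k$. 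Since $B_k$ has no zeros in a fixed pseudohyperbolic neighborhood of $z_k$, I would like to control $|B_k(z)|$ from below by $|B_k(z_k)|$ up to a Harnack-type distortion, and then control $|B_k(z_k)|=|B(z_k)|/|b_{z_k}(z_k)|$... but $b_{z_k}(z_k)=0$, so instead one uses $|B_k(z_k)|=\lim_{w\to z_k}|B(w)/b_{z_k}(w)|$, i.e. $(1-|z_k|^2)|B'(z_k)|$. To bound this below, apply Lemma~\ref{far} at the point $z_k$: there is $\tilde z_k$ with $\rho(\tilde z_k,z_k)\leq e^{-H_0(z_k)}$ and $\rho(\tilde z_k,Z)\geq e^{-10H_0(z_k)}$; the WEP gives $|B(\tilde z_k)|\geq e^{-H_2(\tilde z_k)}$, and since $b_{z_k}$ is the only factor vanishing near $z_k$ while $|b_{z_k}(\tilde z_k)|=\rho(\tilde z_k,z_k)\geq e^{-10H_0(z_k)}$ (say — at least bounded below by a fixed power of the WEP scale), we get $|B_k(\tilde z_k)|$ bounded below by $\exp(-CH_2(\tilde z_k)-CH_0(z_k))$; since $B_k$ is zero-free on $D(z_k,e^{-5H_0(z_k)})$, Harnack on $\log|B_k|$ (a negative harmonic function there) transfers this to a lower bound $|B_k(z_k)|\geq e^{-C'H_3(z_k)}$ for a suitable $H_3\in\Har_+(\D)$, and then again to $|B_k(z)|\geq e^{-C''H_3(z)}$ for $z$ in the inner disk $D(z_k,\tfrac12 e^{-H_0(z_k)})$. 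Combining, for such $z$,
\[
|B(z)|=|b_{z_k}(z)|\,|B_k(z)|=\rho(z,z_k)\,|B_k(z)|\geq \rho(z,Z)\,e^{-C''H_3(z)},
\]
using that $\rho(z,Z)=\rho(z,z_k)$ since $z$ is so close to $z_k$.

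Finally I would glue the two regimes: take $H:=H_2+C''H_3\in\Har_+(\D)$ (a sum of positive harmonic functions is positive harmonic), which dominates both estimates, giving $|B(z)|\geq e^{-H(z)}\rho(z,Z)$ on all of $\D$. By Theorem~\ref{alt-interpolation}, $Z$ is a Nevanlinna interpolating sequence, hence $B$ is a Nevanlinna interpolating Blaschke product, as claimed. The main obstacle I anticipate is the careful bookkeeping of the harmonic functions and constants in the Harnack transfer step: one must check that $\log|B_k|$, being harmonic and negative on the slightly larger disk $D(z_k,e^{-5H_0(z_k)})$, admits a Harnack comparison between $z_k$ and nearby points whose multiplicative constant is absolute (independent of $k$), and that the various rescalings by fixed powers ($e^{-10H_0}$, $e^{-100H_0}$, etc., as in the proof of Lemma~\ref{subproduct}) still correspond to genuine positive harmonic functions after summing — which they do, since multiplying a positive harmonic function by a constant keeps it in $\Har_+(\D)$. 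A secondary point to handle with care is verifying that weak separation really does force the disks to be disjoint with the stated margin so that $\rho(z,Z)=\rho(z,z_k)$ on the inner disk and $B_k$ is genuinely zero-free there.
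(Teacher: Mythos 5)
Your plan is sound in outline, and after fixing the radius bookkeeping it would give the result, but it is a noticeably longer route than the paper's. The paper verifies condition (a) of Theorem~\ref{alt-interpolation} directly and never invokes Lemma~\ref{far}. The key observation is that weak separation already hands you, for free, a whole circle near $\lambda$ that is far from $Z$: if $H_1$ gives the weak separation, then any $z\in\partial D(\lambda,e^{-H_1(\lambda)})$ satisfies $\rho(z,Z)\geq e^{-CH_1(z)}$ (disjointness of the disks plus Harnack on $H_1$). The Nevanlinna WEP then gives $|B_\lambda(z)|\geq|B(z)|\geq e^{-H_2(z)}$ on that circle, and since $B_\lambda$ is zero-free in $D(\lambda,e^{-H_1(\lambda)})$ (again by disjointness), the maximum principle applied to the nonnegative harmonic function $\log|B_\lambda|^{-1}$ there, together with Harnack on $H_2$, yields $|B_\lambda(\lambda)|\geq e^{-2H_2(\lambda)}$. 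That is the Carleson-type condition (a), and one is done immediately. This avoids Lemma~\ref{far} (and hence the need to enlarge $H_0$ to dominate $c_0H_Z$), and also avoids the extra step of patching together the global estimate (b); your argument in fact produces (a) en route and then keeps going unnecessarily.

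As for your route: the Harnack step does not close with the radii you wrote. Lemma~\ref{far} applied with $H_0$ only gives $\rho(\tilde z_k,z_k)\leq e^{-H_0(z_k)}$, which can put $\tilde z_k$ right at the boundary of the zero-free disk $D(z_k,e^{-H_0(z_k)})$ (where the Harnack constant degenerates) and certainly not inside $D(z_k,e^{-5H_0(z_k)})$, which is a \emph{smaller} disk, not a ``slightly larger'' one as you say, since $e^{-5H_0}<e^{-H_0}$. The fix is to apply Lemma~\ref{far} with $10H_0$ in place of $H_0$, getting $\rho(\tilde z_k,z_k)\leq e^{-10H_0(z_k)}$ and $\rho(\tilde z_k,Z)\geq e^{-100H_0(z_k)}$; then Harnack in $D(z_k,e^{-5H_0(z_k)})$ has a uniform constant (relative distance $\leq e^{-5H_0(z_k)}\leq e^{-5}$, assuming $H_0\geq 1$), exactly as in the proof of Lemma~\ref{subproduct}. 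You anticipated this obstacle yourself, and it is indeed fixable, but the comparison shows what the maximum-principle argument buys: it replaces the point $\tilde z_k$, whose existence Lemma~\ref{far} guarantees only under a lower bound on $H_0$, by the explicit circle $\partial D(\lambda,e^{-H_1(\lambda)})$, which weak separation provides with no side conditions.
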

\begin{proof}
Let $H_1 \in \Har_+(\D)$ giving the separation.
Since $B$ is Nevanlinna WEP there exists $H_2\in\Har_+(\D)$ such that
\[
 |B(z)|\geq e^{-H_2(z)} \quad\textrm{for}\quad z\in\cup_{\lambda\in Z} \partial 
D(\lambda, e^{-H_1(\lambda)})
\]
In particular,
\[
 |B_\lambda(z)|\geq e^{-H_2(z)} \quad\textrm{for}\quad z\in  \partial 
D(\lambda, e^{-H_1(\lambda)}).
\]
Since $B_\lambda$ has no zeros in $D(\lambda, e^{-H_1(\lambda)})$ we can apply 
the maximum principle to the harmonic function 
$\log |B_\lambda|^{-1}$ to deduce that
\[
 |B_\lambda(\lambda)|\geq \min_{z\in  \partial D(\lambda, e^{-H_1(\lambda)})} 
e^{-H_2(z)} \geq e^{-2H_2(\lambda)}.
\]
\end{proof}

The bulk of the proof of Theorem \ref{finite_u} is spent on showing that any 
Nevanlinna WEP Blaschke product
must have a zero set which is a finite union of weakly separated subsequences 
\cite[Theorem B and Section 3]{MNT}. 
Then Lemma \ref{subproduct}
shows that each of those subsequences generates  a Nevanlinna WEP Blaschke 
product, which then must
be Nevanlinna interpolating by Lemma \ref{sepint}.

Our last result collects several different descriptions of  products of exactly 
$N$ Nevanlinna interpolating Blaschke products,
which are then equivalent to the property about the trace space on the zero set
given in Theorem \ref{divided differences}. 
Analogous results for interpolating Blaschke products were proved by 
Kerr-Lawson 
\cite{K-L}, Gorkin and Mortini \cite{GM} and Borichev \cite{Bo}.

Given a Blaschke product $B$ and $z \in \D$, let $|B(N)(z)|$ denote the value 
at 
the point $z\in\mathbb{D}$ of the modulus of the Blaschke product obtained from 
$B$ after deleting the $N$ zeros of $B$  closest to $z$ (in the 
pseudo-hyperbolic metric). 

\begin{theorem}{\cite[Theorem C]{MNT}}
\label{descriptions}
Let $B$ be a Blaschke product with zero set $Z$ and let $N$ be a positive 
integer. 
The following conditions are equivalent:
\begin{itemize}
\item  [(a)] $B$ is a product of $N$ Nevanlinna interpolating Blaschke products.
\item  [(b)]  There exists $H_1\in \Har_+(\D)$ such that 
\begin{equation*}
|B(z)|\geq e^{-H_1(z)}\rho^N\left(z,Z\right),\hspace{1cm}z\in\mathbb{D}.
\end{equation*}
\item  [(c)]  
There exists $H_2\in \Har_+(\D)$ such that $|B(N)(z)|\geq e^{-H_2(z)}$, 
$z\in\mathbb{D}$.
\item [(d)] There exists $H_3\in \Har_+(\D)$ such that
\begin{equation*}
D_N(B)(z)=\sum_{j=0}^{N}(1-|z|)^j|B^{(j)}(z)|\geq 
e^{-H_3(z)},\hspace{1cm}z\in\mathbb{D}.
\end{equation*}
\end{itemize}
\end{theorem}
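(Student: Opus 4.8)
\textbf{Proof plan for Theorem~\ref{descriptions}.}
The plan is to establish a cycle of implications, (a)$\Rightarrow$(b)$\Rightarrow$(c)$\Rightarrow$(d)$\Rightarrow$(a), with the last step being the genuine point of difficulty. The implication (a)$\Rightarrow$(b) should be the computation already sketched after Theorem~\ref{corona_bound}: if $Z=\bigcup_{j=1}^N Z_j$ with each $Z_j$ Nevanlinna interpolating, then by Theorem~\ref{alt-interpolation}(b) we have $|B_j(z)|\ge e^{-H_j(z)}\rho(z,Z_j)$ for suitable $H_j\in\Har_+(\D)$, and multiplying these $N$ estimates together, together with the trivial bound $\rho(z,Z_j)\ge\rho(z,Z)$, gives $|B(z)|\ge e^{-(H_1+\cdots+H_N)(z)}\rho^N(z,Z)$. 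For (b)$\Rightarrow$(c), the idea is that deleting the $N$ pseudo-hyperbolically closest zeros $\zeta_1,\dots,\zeta_N$ to a given $z$ replaces a product of $N$ Blaschke factors each of which is at least $\rho(z,Z)$ in modulus; more precisely $|B(z)| = |B(N)(z)|\prod_{i=1}^N\rho(z,\zeta_i)$ and each $\rho(z,\zeta_i)\le 1$, so $|B(N)(z)|\ge|B(z)|\ge e^{-H_1(z)}\rho^N(z,Z)$, but one still needs to control $\rho^N(z,Z)$ from below by $|B(N)(z)|$ times something harmless. The cleaner route is to note that after removing the $N$ closest zeros, the remaining zeros $\zeta$ satisfy $\rho(z,\zeta)\ge\rho(z,Z)$ trivially while the removed ones are exactly the source of the factor $\rho^N$; a short argument of this type, or alternatively appeal to the standard estimate relating $|B(N)|$, $|B|$ and $\rho(\cdot,Z)$ from the interpolating-Blaschke-product literature (Kerr-Lawson, Gorkin--Mortini), yields (c) with $H_2 = H_1$ up to a constant.

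For (c)$\Rightarrow$(d), the plan is to compare the derivative sum $D_N(B)(z)=\sum_{j=0}^N(1-|z|)^j|B^{(j)}(z)|$ with $|B(N)(z)|$. The mechanism is the factorization $B = B(N)\cdot b$ where $b=\prod_{i=1}^N b_{\zeta_i}$ is the finite Blaschke product of the $N$ removed zeros: one has $|B(z)|+(1-|z|)|B'(z)|+\cdots$ controlled below by $|B(N)(z)|$ times the corresponding derivative-type quantity for the finite product $b$, and for a finite Blaschke product of degree $N$ one checks by an elementary (scale-invariant) estimate that $\sum_{j=0}^N(1-|z|)^j|b^{(j)}(z)|\gtrsim 1$ uniformly — this is because $b$ and its first $N$ derivatives cannot all be small simultaneously at a point (it is a polynomial identity in the disc automorphism picture). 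Leibniz' rule plus these two facts give $D_N(B)(z)\gtrsim|B(N)(z)|\ge e^{-H_2(z)}$, hence (d) with $H_3\approx H_2$.

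The hard part is (d)$\Rightarrow$(a), reconstructing the decomposition of $B$ into $N$ interpolating products from the lower bound on $D_N(B)$. The strategy here is to recognize that condition (d) says $(B,B',(1-|z|)B'',\dots)$ behaves like a ``corona data'' with harmonic control, so one first wants to pass to condition (b) (the $\rho^N$ lower bound), and for this one would argue locally: near any point $z$, either $\rho(z,Z)$ is bounded below by $e^{-H(z)}$, in which case by the Nevanlinna WEP argument (Theorem~\ref{corona_bound}) one already controls $|B(z)|$, or $z$ is close to a cluster of at most $N$ zeros (the count being forced by (d): if $N+1$ zeros accumulated pseudo-hyperbolically near $z$ then $B$ and its first $N$ derivatives would all be forced small there, contradicting (d)), in which case one factors those out and uses Harnack in a small pseudo-hyperbolic disc exactly as in the proof of Lemma~\ref{subproduct}. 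Once (b) is in hand, the decomposition into $N$ weakly separated subsequences follows as in the proof of Theorem~\ref{finite_u} (the bulk argument of \cite[Section 3]{MNT}), and then Lemma~\ref{subproduct} together with Lemma~\ref{sepint} upgrades each weakly separated subsequence to a genuine Nevanlinna interpolating sequence. The main obstacle is thus the combinatorial/geometric extraction of the $N$ subsequences with the correct harmonic-majorant bookkeeping — keeping track of how the harmonic functions grow under the repeated ``remove $N$ closest zeros and apply Harnack'' procedure — and this is where one must invoke the machinery already developed for Theorems~\ref{finite_u} and~\ref{alt-interpolation} rather than reprove it.
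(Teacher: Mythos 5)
The survey itself does not prove Theorem~\ref{descriptions}: it states it with the citation \cite[Theorem C]{MNT} and only remarks that the $N=1$ case of (a), (b), (d) is Theorem~\ref{alt-interpolation}. So there is no in-paper proof to compare against; your plan must stand on its own, and in its present form it has a genuine gap.

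Your step (a)$\Rightarrow$(b) is correct and matches the computation in Section~\ref{wep} showing that a finite product of Nevanlinna interpolating Blaschke products has the Nevanlinna WEP. The step (b)$\Rightarrow$(c), however, is not correct as written. You assert that the $N$ removed zeros $\zeta_1,\dots,\zeta_N$ ``are exactly the source of the factor $\rho^N$,'' i.e.\ $\prod_{i=1}^N\rho(z,\zeta_i)\lesssim\rho^N(z,Z)$, which together with $|B(N)(z)|=|B(z)|/\prod_i\rho(z,\zeta_i)$ would give $|B(N)(z)|\gtrsim e^{-H_1(z)}$. But the inequality runs the other way: $\rho(z,\zeta_i)\geq\rho(z,Z)$ for every $i$, so $\prod_i\rho(z,\zeta_i)\geq\rho^N(z,Z)$ and the quotient bound only produces an \emph{upper} bound on $|B(N)(z)|$. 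Concretely, if one zero lies at pseudohyperbolic distance $\varepsilon$ from $z$ and the other nearby zeros are at distance comparable to $1$, then (b) gives $|B(z)|\gtrsim e^{-H_1(z)}\varepsilon^N$ while $|B(N)(z)|\approx|B(z)|/\varepsilon$, so your estimate degenerates to $|B(N)(z)|\gtrsim e^{-H_1(z)}\varepsilon^{N-1}$, which is useless as $\varepsilon\to 0$. A correct argument must first extract from (b) a local zero count (at most $N$ zeros of $B$ in a sufficiently small pseudohyperbolic disc around $z$, small relative to a harmonic function), then use Lemma~\ref{far} to move to a nearby $\tilde z$ with $\rho(\tilde z,Z)$ bounded below, apply (b) at $\tilde z$, and finally transfer the resulting lower bound on $|B(N)(\tilde z)|$ back to $z$ via Harnack --- which requires knowing that $B(N)$ is zero-free near $z$, i.e.\ precisely the local count. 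None of this is in your plan, and the appeal to ``the standard estimate from the interpolating-Blaschke-product literature'' does not repair it, since the $\H^\infty$ versions of that estimate rest on the same local count.

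Two lesser points. In (c)$\Rightarrow$(d), the Leibniz expansion $B^{(j)}=\sum_{k}\binom{j}{k}B(N)^{(k)}b^{(j-k)}$ is a signed sum, so the bound $D_N(B)\gtrsim|B(N)|\cdot D_N(b)$ does not follow by taking absolute values termwise; one needs an extra step (for instance splitting according to whether $|b(z)|$ is above or below a threshold, or comparing with $\sup_{D(z,c)}|B|$), so this is fixable but not the one-line computation you suggest. In (d)$\Rightarrow$(a), the outline (force a local zero count from (d), pass to (b), then invoke Theorem~\ref{finite_u}, Lemma~\ref{subproduct} and Lemma~\ref{sepint}) is plausible and in the right spirit, but the two genuinely hard pieces --- the harmonic-majorant bookkeeping under iterated applications of Lemma~\ref{far} and Harnack, and the extraction of $N$ weakly separated subsequences with controlled majorants --- are exactly the ones you defer to unspecified ``machinery,'' so this direction remains a pointer rather than a proof.
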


The equivalence between (a), (b) and (d) for $N=1$ is stated in 
Theorem~\ref{alt-interpolation} 
(see \cite[Theorem 1.2]{HMN1}). 

A consequence of these characterizations is a kind of stability of the property 
of being a 
finite product of Nevanlinna Interpolating Blaschke products.
\begin{corollary}
Let $B$ be a finite product of Nevanlinna interpolating Blaschke products. 
Then, there exists $H_0=H_0(B)\in \Har_+(\D)$  such that for any $g\in 
H^\infty$ 
with $|g(z)|\leq e^{-H_0(z)}$, $z\in\mathbb{D}$, the function $B-g$ factors as 
$B-g=B_1G$, 
where $B_1$ is a finite product of Nevanlinna interpolating Blaschke products 
and 
$G\in H^\infty$ is such that $1/G\in H^\infty$.
\end{corollary}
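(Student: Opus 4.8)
The plan is to deduce the factorization from Theorem~\ref{descriptions}(b), which says that $B$, being a product of $N$ Nevanlinna interpolating Blaschke products, satisfies $|B(z)| \ge e^{-H_1(z)} \rho^N(z,Z)$ for some $H_1 \in \Har_+(\D)$, where $Z$ is the zero set of $B$. The first step is to fix $H_0$ in terms of $H_1$ and $N$ so that, for any $g \in \H^\infty$ with $|g(z)| \le e^{-H_0(z)}$, the perturbed function $B - g$ stays large away from $Z$ and, near each zero $z_k$ of $B$, has exactly one zero close to $z_k$. Concretely, I would take $H_0 = 2H_1$ (with perhaps a further additive constant absorbed into $H_1$), so that on the set $\{z : \rho(z,Z) \ge e^{-H_1(z)}\}$ one has $|g(z)| \le e^{-2H_1(z)} \le e^{-H_1(z)} \rho^N(z,Z) \le |B(z)|$, and in fact $|g(z)| \le \tfrac12 |B(z)|$ after adjusting the constant; hence $B - g$ has no zeros there. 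The zeros of $B - g$ therefore all lie in $\bigcup_k D(z_k, e^{-H_1(z_k)})$.

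The second step is a local zero-counting argument. Since $B$ is a finite union of $N$ weakly separated interpolating sequences, for a suitable $H_1$ the disks $D(z_k, e^{-H_1(z_k)})$ meet at most $N$ of the zeros $z_j$ (counting multiplicity); one can arrange, by enlarging $H_1$, that the total multiplicity of $B$ inside each $D(z_k, e^{-H_1(z_k)})$ is bounded by $N$ and that these disks fall into $N$ families of pairwise disjoint disks. On the boundary $\partial D(z_k, e^{-H_1(z_k)})$ one has $|g| \le e^{-H_0} < |B|$, so by Rouch\'e's theorem $B - g$ has the same number of zeros in that disk as $B$ does, at most $N$. Let $Z'$ be the zero set of $B - g$ (a Blaschke sequence, since $B - g \in \H^\infty$); grouping as above, $Z'$ is a finite union of at most $N$ subsequences, each obtained from a weakly separated interpolating sequence $Z_j$ by a pseudohyperbolic perturbation of size $\le e^{-H_1(z_k)}$. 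By Corollary~\ref{stability} (perturbation stability of Nevanlinna interpolating sequences), after possibly shrinking the allowed perturbation — i.e.\ enlarging $H_0$ once more so that $\rho(z_k, \text{new zero}) \le \tfrac14 e^{-H(z_k)}$ for the $H$ from Theorem~\ref{alt-interpolation}(a) — each such subsequence is again Nevanlinna interpolating. Hence the Blaschke product $B_1 := B_{Z'}$ is a product of at most $N$ Nevanlinna interpolating Blaschke products.

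The third step is to identify $G := (B-g)/B_1$. Since $B_1$ carries exactly the zeros of $B - g$, the quotient $G$ is a zero-free holomorphic function, and $G \in \N$ because $B - g \in \H^\infty \subset \N$ and $B_1 \in \H^\infty$ with $|B_1| = 1$ a.e.\ on $\partial\D$, so by Theorem~\ref{zeroes} the quotient $(B-g)/B_1 \in \N$. To get the stronger conclusion $G \in \H^\infty$ with $1/G \in \H^\infty$, observe that $|G| = |B - g| \le |B| + |g| \le 2$ pointwise, so $G \in \H^\infty$; and for the lower bound, on $\{\rho(z,Z) \ge e^{-H_1(z)}\}$ we have $|B - g| \ge \tfrac12|B| \ge \tfrac12 e^{-H_1(z)}\rho^N(z,Z)$ while $|B_1(z)| \lesssim 1$, and inside each disk $D(z_k, e^{-H_1(z_k)})$ both $B - g$ and $B_1$ have the same zeros with the same multiplicities, so $G$ is zero-free and bounded below there by a maximum-principle/Harnack argument exactly as in the proofs of Lemmas~\ref{subproduct} and \ref{sepint}. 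This yields $|1/G| \le C$, so $1/G \in \H^\infty$.

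\textbf{Main obstacle.} The delicate point is the local analysis inside the perturbation disks $D(z_k, e^{-H_1(z_k)})$: one must choose $H_0$ (hence the smallness of $g$) so that simultaneously (i) Rouch\'e applies on each such boundary circle, (ii) the perturbed zeros stay within the $\tfrac14 e^{-H(z_k)}$ tolerance required by Corollary~\ref{stability} for \emph{each} of the $N$ constituent interpolating sequences, and (iii) the lower bound for $|G|$ survives. Since $e^{-10H} \ll e^{-H}$ even though the harmonic functions differ by a constant — the phenomenon already exploited in Lemma~\ref{far} — there is enough room: replacing $H_1$ by a large constant multiple of itself shrinks $e^{-H_0}$ far more than it shrinks the tolerance radii, so all three requirements can be met at once. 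Verifying that the constant multiple can be chosen uniformly (depending only on $B$, i.e.\ on $N$ and the original interpolation data) is the one place where one must be careful, but it follows the same counting scheme as Lemma~\ref{far}.
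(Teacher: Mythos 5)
Your overall strategy for the Blaschke factor is sound: use Theorem~\ref{descriptions}(b) to get $|B|\ge e^{-H_1}\rho^N(z,Z)$, conclude via a Rouch\'e argument on the boundaries $\partial D(z_k, e^{-H_1(z_k)})$ that $B-g$ has all its zeros in these small disks, and then invoke Corollary~\ref{stability} (after grouping the perturbed zeros into $N$ families) to conclude that the zero set $Z'$ of $B-g$ is again a union of $N$ Nevanlinna interpolating sequences. Two cosmetic points: the inequality $|B|\ge e^{-H_1}\rho^N(z,Z)\ge e^{-(N+1)H_1}$ on $\{\rho(z,Z)\ge e^{-H_1}\}$ means you need $H_0$ comparable to $(N+1)H_1$, not $2H_1$; and $|G|=|B-g|/|B_1|$, not $|B-g|$, so $G\in\H^\infty$ should be obtained from the Riesz--Smirnov factorization ($G$ is the non-Blaschke factor of $B-g$, hence $\|G\|_\infty=\|B-g\|_\infty$), not from the false identity $|G|=|B-g|$.

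The genuine gap is the last step, $1/G\in\H^\infty$. What this requires is that $|G|$ be bounded below by a \emph{positive constant} on all of $\D$; equivalently, that $B-g$ has no singular inner factor and its boundary modulus is bounded away from $0$. The boundary part is easy once you normalize $H_0\ge 1$, since then $|g^*|\le e^{-1}$ and $|(B-g)^*|\ge 1-e^{-1}>0$ a.e. But your lower bound away from the zeros, $|G|\ge |B-g|\ge \tfrac12 e^{-H_1}\rho^N(z,Z)$, tends to $0$ as $z\to\partial\D$, so it shows at best $|G|\ge e^{-H}$ for some $H\in\Har_+(\D)$, i.e.\ invertibility in $\N$, not in $\H^\infty$. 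Similarly, the ``maximum-principle/Harnack argument as in Lemmas~\ref{subproduct} and \ref{sepint}'' produces bounds of the form $e^{-CH(z)}$, again far from a uniform constant. Nothing in the argument rules out a nontrivial singular inner factor $S$ of $B-g$: if such $S$ exists, then $G=S\mathcal O$ with $\mathcal O$ outer and $1/G$ unbounded, and your estimates do not exclude this. This is exactly the delicate point: one must show that the smallness hypothesis $|g|\le e^{-H_0}$, combined with the quantitative nondegeneracy of $B$ (e.g.\ via condition (d) of Theorem~\ref{descriptions}, $D_N(B)\ge e^{-H_3}$, which passes to $B-g$ by Cauchy estimates and Harnack), forces the inner part of $B-g$ to be a pure Blaschke product and the outer part to be bounded below. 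As it stands, your proof establishes that $Z'$ is a finite union of Nevanlinna interpolating sequences, but not the stronger conclusion that $B-g$ factors as (product of $N$ NI Blaschke products) $\times$ (unit of $\H^\infty$).
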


\subsection{Invertibility threshold in the Nevanlinna Class.}

Along with the definition of the WEP, the even more subtle question of the 
invertibility threshold was raised in \cite{GMN2}: 
is there $c\in [0,1)$ such that given any
$[f] \in \H^\infty /I\H^\infty$ with $\|[f]\|:=\inf\left\{ \|g\|: 
g\in[f]\right\}=1$
and $\inf_{I^{-1}\{0\}} |f| > c$, then $f$ is invertible in $\H^\infty 
/I\H^\infty$? 
The case of the WEP corresponds to $c=0$.
It was shown in \cite{NV} that for any value of $c\in(0,1)$, there is some 
Blaschke product with zero set $Z$ so that under the condition $\inf_Z |f| > 
c$, then $f$ is invertible in $\H^\infty /B\H^\infty$,
but no $c'<c$ will work.  

In the case of the Nevanlinna Class, $|f|$ being bounded from below by a small 
constant 
will be replaced by $|f|$ being bounded from below by $e^{-H}$, where $H$ is a 
large positive harmonic function. 
To study the question, we need a quantitative version of Theorem 
\ref{corona_bound}. 
Since any $f\in\nev$ can be written $f=g_1/g_2$ with $g_1,g_2 \in \hid$, 
invertibility of $f$ is equivalent
to that of $g_1$. We thus state the result for bounded functions, which is a 
way of normalizing the
functions we are considering. 

Observe that a restriction arises which does not occur in the $\hid$ case: the 
two properties below
are equivalent only when considering positive harmonic functions which are 
larger than $H_Z$,
the function in Definition \ref{harmbase}.

\begin{theorem}{\cite[Theorem 1]{NT}}
\label{invert_bound}
Let $B$ be a Blaschke product with zero set  $Z=(z_k)_k$.
    \begin{enumerate}
        \item [(a)]  There exists a universal constant $C>0$ such that the 
following statement holds. 
        Let $H\in \Har_+(\D)$ and assume that the function $-\log|B|$ has a 
harmonic majorant on the set 
        $ \{z\in\D :\rho(z,Z)\geq e^{-H(z)} \}$. Then for any $f\in \H^\infty$, 
$||f||_\infty\leq 1$ such that
            \begin{equation}
\label{parta}
             |f(z_k)|> e^{-CH(z_k)}, \quad k=1,2, \ldots , 
            \end{equation}
        there exist $g,h\in\nev$ such that $fg=1+Bh$.
        \item [(b)] There exist universal constants  $C_0>0$ and $C >0$ such 
that the following statement holds. 
        Let $H\in\Har_+(\D)$ with $H\geq C_0 H_Z$. Assume that for any $f\in 
H^\infty$, $||f||_\infty\leq1$ such that
        \eqref{parta} holds,
        there exist $g,h\in\nev$ such that $fg=1+Bh$. Then, the function 
$-\log|B|$ has a 
        harmonic majorant on the set $\{z\in\mathbb{D}\, : \rho(z,Z)\geq 
e^{-H(z)}\}$.
    \end{enumerate}
\end{theorem}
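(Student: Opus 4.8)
The plan is to read the two implications of Theorem~\ref{invert_bound} as a quantitative version of Theorem~\ref{corona_bound}: part (a) is a constructive Bézout statement that I would reduce to Mortini's corona theorem, and part (b) is a converse that I would prove by exhibiting a bad bounded function.

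For part (a), I would first normalise $f$ using the Riesz--Smirnov factorisation \eqref{factor}: write $f=B_f f_0$, with $B_f$ the Blaschke product on the zeros of $f$ and $f_0=\alpha S\mathcal O$ zero-free with $\|f_0\|_\infty\le1$. Since $|f(z_k)|>e^{-CH(z_k)}>0$, the product $B_f$ has no zero on $Z$ and $|B_f(z_k)|\ge|f(z_k)|>e^{-CH(z_k)}$; and because $f_0$ is zero-free one has $-\log|f_0|\in\Har_+(\D)$, so $1/f_0\in\N$ already inverts $f_0$. Hence it suffices to find $g,h\in\N$ with $B_f g=1+Bh$, i.e.\ we may assume $f=B_f$ is a Blaschke product whose zero set is disjoint from $Z$. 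The core step is then to show that, for a suitable universal $C$, the hypotheses force
\[
|f(z)|+|B(z)|\ \ge\ e^{-H'(z)},\qquad z\in\D,
\]
for some $H'\in\Har_+(\D)$; granting this, Theorem~\ref{N-corona} produces $g,h\in\N$ with $fg+Bh=1$, that is $fg=1+B(-h)$, which is the conclusion.

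To establish that lower bound I would split $\D=G\cup(\D\setminus G)$ with $G:=\{z:\rho(z,Z)\ge e^{-H(z)}\}$. On $G$ the hypothesis directly gives $|B(z)|\ge e^{-H_2(z)}$ for some $H_2\in\Har_+(\D)$. Off $G$, each $z$ lies pseudohyperbolically within $e^{-H(z)}$ of a nearest zero $z_k$; on a slightly smaller disc $D(z_k,\tfrac18 e^{-CH(z_k)})$ the Schwarz--Pick lemma upgrades $|f(z_k)|>e^{-CH(z_k)}$ to $|f(z)|\ge\tfrac12 e^{-CH(z_k)}$, which Harnack's inequalities \eqref{harnack} convert to $|f(z)|\ge e^{-H_3(z)}$ with $H_3\in\Har_+(\D)$. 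On the remaining transition region one factors $B=b_{z_k}B_k$, observes that $|b_{z_k}(z)|=\rho(z,z_k)$ is already bounded below there, and must dominate $-\log|B_k|$ near $z_k$ by a positive harmonic function, via a Privalov-shadow estimate of the type used in Proposition~\ref{propsep} together with the Blaschke condition for $Z$. Choosing $C$ large enough and adding the finitely many harmonic contributions produces $H'$. The step I expect to be the main obstacle is exactly this gluing in the transition region: exhibiting a \emph{single} positive harmonic majorant that simultaneously controls $-\log(|f|+|B|)$ on $G$, on the small discs about the $z_k$, and on the annular gaps between them — in particular taming the local factor $B_k$ near $z_k$, where the hypothesis over $G$ no longer applies and where the separation scale $e^{-CH}$ coming from $f$ must be reconciled with the Nevanlinna scale of $B$.

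For part (b) I would argue by contraposition: assuming $-\log|B|$ has \emph{no} positive harmonic majorant on $\Omega_H:=\{z:\rho(z,Z)\ge e^{-H(z)}\}$, I would construct $f\in\H^\infty$, $\|f\|_\infty\le1$, with $|f(z_k)|>e^{-CH(z_k)}$ for all $k$, that is not invertible in $\N/B\N$. The heuristic is that if $fg=1+Bh$ with $g,h\in\N$ and $\log_+|h|\le H_h$, $\log_+|g|\le H_g$, then at any $z\in\Omega_H$ with $|B(z)|\le\tfrac12 e^{-H_h(z)}$ one gets $|f(z)g(z)|\ge\tfrac12$, hence $-\log|f(z)|\le H_g(z)+\log2$; so an $f$ that is $\approx1$ on $Z$ but whose modulus is forced to be as small as a fixed multiple of $-\log|B|$ along a sparse far-away subset of $\Omega_H$ — where, by assumption, $-\log|B|$ is not harmonically bounded — will contradict the positivity/Harnack growth of $H_g$. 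The actual construction follows the pattern of the ``only if'' direction of Theorem~\ref{corona_bound}: select, via a Whitney/dyadic bookkeeping on $\Omega_H$ (this is where the restriction $H\ge C_0 H_Z$, i.e.\ the auxiliary function of Definition~\ref{harmbase}, enters), a sparse family of points of $\Omega_H$ at which $|B|$ decays faster than any positive harmonic function, and build a Blaschke product (possibly times an outer factor) taking value $\approx1$ on $Z$ while being uniformly small at those points. The main obstacle is this explicit construction — producing a bounded holomorphic function essentially equal to $1$ on all of $Z$ yet as small as prescribed on a prescribed far-away set, with uniform estimates — which is itself a constrained interpolation problem and is precisely the reason the clean equivalence requires $H$ to dominate $H_Z$.
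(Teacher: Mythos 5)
The survey itself does not prove Theorem~\ref{invert_bound}; it cites \cite{NT}. So I can only assess your outline on its own merits, and in that light your strategy for (a) is correct but you manufacture an obstacle that is not there, while your outline for (b) is a reasonable sketch of the genuine construction.

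For (a): reducing to Mortini's corona theorem via the pointwise bound $|f(z)|+|B(z)|\ge e^{-H'(z)}$ is exactly the right move (the normalisation $f=B_f f_0$ is actually superfluous: Schwarz--Pick applies to any $f$ with $\|f\|_\infty\le1$). But the ``transition region'' you worry about disappears if you split the disc differently and choose $C$ small. Fix a universal $\delta_0$ (say $\delta_0=1/3$) and $C$ small (say $C=1/5$), and consider three cases. On $G$, the hypothesis gives $|B|\ge e^{-V}$. On $\{\rho(\cdot,Z)\ge\delta_0\}\setminus G$, Proposition~\ref{propsep} applied with $\delta=\delta_0$ gives directly $-\log|B|\le h$ there, since \emph{all} zeros are $\delta_0$-far. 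Finally, for $z\notin G$ with $\rho(z,Z)<\delta_0$, let $z_k$ be the nearest zero; Harnack gives $H(z_k)\le\frac{1+\delta_0}{1-\delta_0}H(z)=2H(z)$, so $a:=|f(z_k)|>e^{-2CH(z)}$, while $r:=\rho(z,z_k)<\min(\delta_0,e^{-H(z)})$. Because $2C<1$, one has $r<a$, and a short computation with $\frac{a-r}{1-ar}$ (splitting $H(z)\le1$ from $H(z)>1$) yields $|f(z)|\ge c\,e^{-2CH(z)}$ with universal $c>0$. The three pieces cover $\D$, so $|f|+|B|\ge e^{-H'}$ for some $H'\in\Har_+(\D)$ and Theorem~\ref{N-corona} finishes. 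Your proposed detour through $D\bigl(z_k,\tfrac18 e^{-CH(z_k)}\bigr)$ and a Privalov-shadow estimate for $B_k$ was the source of the apparent gap: the shadow estimate controls only the far factors of $B_k$ and cannot, by itself, control the near ones, which is why you sensed trouble. Using a fixed $\delta_0$ and pushing Schwarz--Pick all the way to $\rho(z,z_k)<\delta_0$ sidesteps this entirely.

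For (b): your contrapositive scheme — use the Bézout identity to show that any solvable $f$ cannot be small at points of $\{\rho(\cdot,Z)\ge e^{-H}\}$ where $|B|$ decays without a harmonic bound, then construct a bounded $f$, near $1$ on $Z$, that is forced to be small at such points — is the natural one, and the role of $H\ge C_0 H_Z$ (guaranteeing enough ``room'' near $Z$, as in Lemma~\ref{far}) is correctly identified. You are right that the constrained interpolation construction is the technical heart; the survey does not carry it out, so I cannot compare your sketch step by step, but nothing in your outline is wrong in principle.
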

The result can be extended to B\'ezout equations with any number of generators 
\cite[Corollary 2]{NT}.

So we get a sufficient condition (a) for a solution to the invertibility 
problem in the quotient algebra 
 $\mathcal{N } / B \mathcal{N}$, and a necessary condition when the harmonic 
function under consideration
 is large enough. 
This condition is in terms of the following class.
 
\begin{definition}
Given a Blaschke product $B$, let $\mathcal{H}(B)$ be the set of functions 
$H\in \Har_+(\mathbb{D})$ such that $-\log|B|$ has a harmonic majorant on the 
set $\{z\in\mathbb{D}\,:\rho(z,Z)\geq e^{-H(z)}\}$. 
 \end{definition}
 
  It is easy to see that constant functions are always in $\mathcal{H}(B)$ (see 
Proposition 4.1 of \cite{HMNT}), 
  and that if $H_1\in \mathcal{H}(B)$ and $H_2 \in \mbox{Har}_+(\mathbb{D})$,
 $H_2 \le H_1$, then $H_2\in \mathcal{H}(B)$. In this language Theorem 
\ref{finite_u} reads as follows: 
 $\mathcal{H}(B)=\Har_+(\mathbb{D})$ if and only if $Z$ is a finite union of 
interpolating sequences for $\mathcal{N}$.

For any Blaschke product $B$, $\mathcal{H}(B)$ does contain unbounded functions 
\cite[Theorem 2]{NT}. We have 
the following analogue of the Nikolski-Vasyunin result, showing that given any 
positive
harmonic function $H$, there exist Blaschke products so that $H$ is in some 
sense on the boundary
of the class $\mathcal{H}(B)$. 

\begin{theorem}{\cite[Theorem 3]{NT}}
\label{crith}
\begin{enumerate}
\item [(a)]
Let $H_1,H_2\in \Har_+(\D)$ such that
\begin{equation*}
    \limsup_{|z|\to 1}\;\frac{H_1(z)}{H_2(z)}=+\infty.
\end{equation*}
Then there exists a Blaschke product $B$ with zero set $Z$ such that 
$H_2 \in \mathcal H(B)$ but $H_1 \notin \mathcal H(B)$.
\item [(b)]
For any $\eta_0>0$, and any unbounded positive harmonic function $H$, there 
exists a Blaschke product $B$ such that 
$H \in \mathcal H(B)$ but $(1+\eta_0)H \notin \mathcal H(B)$.
\end{enumerate}
\end{theorem}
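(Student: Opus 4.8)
If $H_1$ were bounded it would lie in $\mathcal H(B)$ for every $B$ (the constants lie in $\mathcal H(B)$, which is downward closed), so the asserted $B$ can exist only when $H_1$ is unbounded; we use this below. The plan is to let $B$ be the Blaschke product whose zero set $Z$ consists of a very sparse sequence $(w_k)_k$, each $w_k$ carrying a large multiplicity $N_k$. Using the hypothesis together with the unboundedness of $H_1$, pick $(w_k)_k\subset\D$ with $|w_k|\to1$, $\rho(w_j,w_k)\geq\tfrac12$ for $j\neq k$, and
\begin{equation}\label{choixpoints}
  H_1(w_k)\geq k^3,\qquad H_1(w_k)\geq k^3\,H_2(w_k)\qquad(k\in\mathbb N),
\end{equation}
and set $N_k:=\bigl\lceil k/(H_1(w_k)(1-|w_k|))\bigr\rceil$. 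Then $\sum_k N_k(1-|w_k|)\lesssim\sum_k k^{-2}<\infty$, so $B$ is well defined.

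To see that $H_2\in\mathcal H(B)$, write $-\log|B(z)|=\sum_k N_k\log(1/\rho(w_k,z))$. One verifies that $\mathcal P[w]\in\Har_+(\D)$ majorizes $-\log|B|$ on $\{z:\rho(z,Z)\geq e^{-H_2(z)}\}$, where $w:=C\sum_j N_j\chi_{I_{w_j}}+C'\sum_j N_jH_2(w_j)\chi_{I_{w_j}}$ for suitable absolute constants $C,C'$: for the at most one index $k$ with $\rho(z,w_k)<\tfrac14$ the point $w_k$ is the nearest zero, so $\log(1/\rho(z,w_k))\leq H_2(z)\lesssim H_2(w_k)$ by \eqref{harnack} while $\mathcal P[\chi_{I_{w_k}}]\gtrsim1$ on $D(w_k,\tfrac14)$, and the remaining terms obey $\log(1/\rho(w_j,z))\lesssim\mathcal P[\chi_{I_{w_j}}](z)$, the estimate underlying Proposition~\ref{propsep}. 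Here $w\in L^1(\partial\D)$ because $\sum_j N_jH_2(w_j)(1-|w_j|)\lesssim\sum_j jH_2(w_j)/H_1(w_j)\lesssim\sum_j j^{-2}<\infty$ by \eqref{choixpoints}. To see that $H_1\notin\mathcal H(B)$, suppose some $U\in\Har_+(\D)$ majorized $-\log|B|$ on $\{z:\rho(z,Z)\geq e^{-H_1(z)}\}$. By \eqref{harnack} the pseudohyperbolic circle $\Gamma_k:=\{z:\rho(z,w_k)=e^{-H_1(w_k)/2}\}$ lies in that set for $k$ large, and $-\log|B|\geq N_k\log(1/e^{-H_1(w_k)/2})=\tfrac12N_kH_1(w_k)$ on $\Gamma_k$; averaging over $\Gamma_k$ against the harmonic measure seen from $w_k$ yields $U(w_k)\geq\tfrac12N_kH_1(w_k)$, which contradicts $U(w_k)\leq2U(0)/(1-|w_k|)$ since $N_kH_1(w_k)(1-|w_k|)\geq k\to\infty$.

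\textbf{Part (b).} A multiplicity-point construction as above cannot separate $H$ from $(1+\eta_0)H$: for such a $B$ the size of $-\log|B|$ over $\{z:\rho(z,Z)\geq e^{-sH(z)}\}$ is governed by $\sup_k sN_kH(w_k)(1-|w_k|)$, which is finite, or infinite, independently of $s$. A sharper, recursive construction is needed, and the plan is to carry over to $\mathcal N$ the construction of Nikolski and Vasyunin \cite{NV} producing a prescribed invertibility threshold: take $(w_k)_k\to\partial\D$ well separated with $H(w_k)\to\infty$, and glue near each $w_k$ a finite block $B_k$ of zeros spread over a number of nested pseudohyperbolic scales comparable to $H(w_k)$, the finest of radius $e^{-(1+\eta_0)H(w_k)}$, the combinatorics of the block being calibrated to the value $H(w_k)$; set $B:=\prod_k B_k$. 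One then checks $H\in\mathcal H(B)$ as in part (a), by a Poisson-integral majorant of $-\log|B|$ on $\{z:\rho(z,Z)\geq e^{-H(z)}\}$ with $L^1$ density supported on the shadows $I_{w_k}$; while $(1+\eta_0)H\notin\mathcal H(B)$ because on the thin annuli $\{z:e^{-(1+\eta_0)H(z)}\leq\rho(z,Z)<e^{-H(z)}\}$ about the $w_k$ the Green potential $-\log|B|$ becomes large and spread out enough to admit no positive harmonic majorant over all those annuli at once.

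The step I expect to be the main obstacle is the calibration in part (b): each block $B_k$ must be built so that the upper estimate (yielding $H\in\mathcal H(B)$) and the lower estimate (destroying every majorant on the enlarged set) match to within the fixed factor $1+\eta_0$, i.e.\ so that inflating the excluded pseudohyperbolic disks from radius $e^{-H(z)}$ to $e^{-(1+\eta_0)H(z)}$ is \emph{exactly} what kills all harmonic majorants. Unlike in part (a), this obstruction is not a pointwise one -- \eqref{harnack} always lets bounds of the form $(1-|z|)(-\log|B(z)|)\leq C$ survive on sets bounded away from $Z$ -- but a global, ``distributional'' one, reflecting the failure of a Green potential to admit a harmonic majorant over an enlarged domain; making it quantitative locally around each $w_k$ and then summing is the heart of the matter. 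In part (a), by contrast, the only genuine point is the compatibility of \eqref{choixpoints} with the Blaschke and summability requirements, which is precisely where the hypothesis $\limsup_{|z|\to1}H_1(z)/H_2(z)=+\infty$ enters.
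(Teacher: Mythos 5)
Your Part (a) construction is on the right track -- a sparse sequence of points $w_k$, each carrying a large multiplicity $N_k$ tuned against $(1-|w_k|)$, $H_1$ and $H_2$ -- and the upper estimate (via the Privalov-shadow trick from Proposition~\ref{propsep}) and lower estimate (via the mean-value property over a small pseudohyperbolic circle around $w_k$) are sound once the $w_k$ are in hand. The genuine gap is the point selection in \eqref{choixpoints}: you need a single sequence $w_k\to\partial\D$ along which \emph{simultaneously} $H_1(w_k)\to\infty$ and $H_1(w_k)/H_2(w_k)\to\infty$, and this does not follow from ``$\limsup_{|z|\to1}H_1/H_2=\infty$ and $H_1$ unbounded.'' For instance, take $H_1=1+\mathcal P[\delta_1]$ and $H_2=2\mathcal P[\delta_1]+\mathcal P[\chi_E]$ with $E$ an arc around $1$: then $H_1/H_2\to\infty$ as $z\to-1$, where $H_1\to1$; while wherever $H_1\to\infty$ (i.e.\ $z\to1$) one has $H_1/H_2\to 1/2$. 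So the set $\{H_1\geq n\}\cap\{H_1\geq nH_2\}$ is eventually empty and you cannot pick the $w_k$ as claimed. This is precisely the constraint your own argument needs -- you require both $N_kH_1(w_k)(1-|w_k|)\to\infty$ (to kill majorants on the $H_1$-set) and $\sum_kN_k H_2(w_k)(1-|w_k|)<\infty$ (to build one on the $H_2$-set) and $\sum_k N_k(1-|w_k|)<\infty$ (Blaschke), and these three force $H_1(w_k)\to\infty$ and $H_1(w_k)/H_2(w_k)\to\infty$ along the chosen sequence. Either that compatibility must be extracted from the hypothesis by an argument you haven't given, or (as your opening remark about bounded $H_1$ already hints) the theorem carries an implicit extra condition which you should make explicit and check; as it stands the step is unjustified.

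For Part (b) you have correctly diagnosed that the single-point-with-multiplicity construction cannot work -- the near-field contribution to $-\log|B|$ on $\{\rho(z,Z)\geq e^{-sH(z)}\}$ scales linearly in $s$, so it cannot separate $s=1$ from $s=1+\eta_0$ -- and your description of what a successful construction should look like (nested multi-scale blocks of zeros around each $w_k$, calibrated so that inflating the excluded disk by the factor $1+\eta_0$ is exactly critical) is plausible and points at the right analogue of the Nikolski--Vasyunin construction from \cite{NV}. But you explicitly stop short of carrying out the calibration: neither the block combinatorics, nor the $L^1$ majorant on $\{\rho\geq e^{-H}\}$, nor the non-majorizability argument on $\{\rho\geq e^{-(1+\eta_0)H}\}$ is established. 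As written, Part (b) is a research plan, not a proof, and you acknowledge as much. The heart of the theorem -- the sharp threshold -- is precisely this calibration, so the proposal leaves the main content of the result unproved.
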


The Blaschke products which we exhibit in Theorem \ref{crith} have a 
distribution of zeroes taylored to 
the variation of the positive harmonic functions involved. When the harmonic 
functions we consider are 
``too big'' compared to the density of zeroes of $B$, the delicate phenomenon 
involved in Theorem \ref{crith}(b)
disappears, and multiplying by a constant does not affect membership in 
$\mathcal H(B)$.

Recall that the disc is partitioned in the Whitney squares $S_{n,k}$ defined in 
\eqref{whit}.

\begin{theorem}{\cite[Theorem 4]{NT}}
\label{largeH}
Let $B$ be a Blaschke product with zero set $Z$. Let $H\in\Har_+(\D)$ such that 
\begin{equation}
\label{control}
\inf\{e^{H(z)}:z\in Q\}\geq \#(Z\cap Q),
\end{equation}
for any dyadic Whitney square $Q$. Assume that $H\in \mathcal{H}(B)$; then, for 
any $C>0$, $CH\in \mathcal{H}(B)$.
\end{theorem}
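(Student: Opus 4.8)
The plan is to produce a positive harmonic majorant of $-\log|B|$ on the set $\Omega_{CH}:=\{z\in\D:\rho(z,Z)\ge e^{-CH(z)}\}$. If $C\le1$ then $CH\le H$ and $\Omega_{CH}\subseteq\Omega_{H}:=\{z\in\D:\rho(z,Z)\ge e^{-H(z)}\}$, so the conclusion is immediate from $H\in\mathcal{H}(B)$ together with the fact, recorded above, that $\mathcal{H}(B)$ is closed under passing to smaller positive harmonic functions. So assume $C>1$, so that $\Omega_{CH}\supseteq\Omega_H$ and the real task is to enlarge the set on which a majorant exists. Fix $u\in\Har_+(\D)$ with $u\ge-\log|B|$ on $\Omega_H$. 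Since $B=\prod_kb_{z_k}$, one has $-\log|B(z)|=\sum_k\log\frac1{\rho(z,z_k)}$, and by Proposition~\ref{propsep} (applied with some fixed small $\delta_0\in(0,1)$) the contribution of the zeros with $\rho(z,z_k)\ge\delta_0$ is $\le h_0(z)$ for a fixed $h_0\in\Har_+(\D)$, whatever their number. Everything thus reduces to bounding $\sum_{\rho(z,z_k)<\delta_0}\log\frac1{\rho(z,z_k)}$ by a fixed positive harmonic function on $\Omega_{CH}$.

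The heart of the matter is a comparison of $z$ with a carefully chosen nearby point. Granting for a moment that for $z\in\Omega_{CH}$ with $H(z)$ large we can find $p=p(z)\in\Omega_H$ with $\rho(z,p)\le\tfrac12e^{-cH(z)}$ for a suitable small absolute $c\in(0,1)$ (so that $H(p)\approx H(z)$ by Harnack), split the zeros with $\rho(z,z_k)<\delta_0$ according to whether $\rho(p,z_k)\ge e^{-cH(z)}$ or not. In the first case $\rho(z,p)\le\tfrac12\rho(p,z_k)$, hence $\rho(z,z_k)\ge\tfrac12\rho(p,z_k)$ and $\log\frac1{\rho(z,z_k)}\le\log2+\log\frac1{\rho(p,z_k)}$; moreover each such zero has $e^{-cH(z)}\le\rho(p,z_k)<2\delta_0$, so contributes a fixed positive amount to $-\log|B(p)|\le u(p)$, whence there are $\lesssim u(p)$ of them and this group contributes $\lesssim u(p)$ in total. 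In the second case every such zero has $\rho(p,z_k)\ge\rho(p,Z)\ge e^{-H(p)}$ because $p\in\Omega_H$, hence contributes at least $\approx cH(z)$ to $-\log|B(p)|\le u(p)$, so there are only $\lesssim u(p)/H(z)$ of them; on the other hand $z\in\Omega_{CH}$ forces $\log\frac1{\rho(z,z_k)}\le CH(z)$ for \emph{every} $k$, so this group contributes $\lesssim\frac{u(p)}{H(z)}\cdot CH(z)=Cu(p)$. Since $u(p)\approx u(z)$, we get $-\log|B(z)|\le h_0(z)+C'u(z)$ with $C'$ an absolute multiple of $C$. The decisive point is the last estimate: the crowding of zeros around $p$ is controlled (by $\lesssim u/H$) \emph{because} $p\in\Omega_H$, and this exactly absorbs the factor $CH$ carried by each term because $z\in\Omega_{CH}$.

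It remains to locate the point $p$, which is where hypothesis~\eqref{control} is used. As $e^{-cH(z)}$ is small, the disc $D(z,e^{-cH(z)})$ meets only an absolutely bounded number of dyadic Whitney squares $Q$, each within bounded pseudohyperbolic distance of $z$; by \eqref{control} and Harnack, $\#(Z\cap Q)\le e^{\inf_QH}\le e^{c_2H(z)}$ for an absolute $c_2$, so $\#\bigl(Z\cap D(z,e^{-cH(z)})\bigr)\lesssim e^{c_2H(z)}$. A packing/volume count then shows that $D(z,\tfrac12e^{-cH(z)})$ must contain a point at pseudohyperbolic distance $\ge e^{-H(z)-1}$ from $Z$ — hence a point $p\in\Omega_H$ — provided $c$ is chosen small enough that the number $e^{2(1-c)H(z)}$ of pairwise $e^{-H}$-separated points fitting in that disc beats the bound $e^{c_2H(z)}$ on the obstructions, and $H(z)$ exceeds an absolute threshold $H_*$. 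For the remaining $z\in\Omega_{CH}$ with $H(z)\le H_*$, the same counting shows that $D(z,\delta_0)$ contains altogether only an absolutely bounded number of zeros, each contributing $\le CH(z)\le CH_*$ to $-\log|B(z)|$; hence there $-\log|B(z)|\le h_0(z)+C''$ for a constant $C''=C''(C)$. Combining, $-\log|B|$ is dominated on all of $\Omega_{CH}$ by the positive harmonic function $h_0+C'u+C''$, so $CH\in\mathcal{H}(B)$.

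I expect the last step to be the main obstacle: the balance between the Harnack loss $c_2$ in the Whitney-square count and the available exponent $2(1-c)$ must be made to work, and this is precisely where the full strength of \eqref{control} — bounding $\#(Z\cap Q)$ by $\inf_Qe^{H}$, not by a fixed constant — is needed. Informally, \eqref{control} is exactly what rules out the zeros of $B$ piling up so tightly near a point of $\Omega_{CH}$ that $-\log|B|$ would there grow faster than any positive harmonic function; this is the very configuration that, for general $B$, makes the scaling-sensitive behaviour of Theorem~\ref{crith}(b) possible.
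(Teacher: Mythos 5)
Your argument is structurally sound and gives, I believe, a correct proof; the two places that need repair are small but real, and both sit precisely in the step you yourself flag as the obstacle.

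First, the avoidance radius has the wrong sign. You choose $p$ with $\rho(z,p)\le\tfrac12e^{-cH(z)}$ and want $p\in\Omega_H$, i.e.\ $\rho(p,Z)\ge e^{-H(p)}$. Harnack gives a \emph{lower} bound $H(p)\ge(1-e^{-cH(z)})H(z)\ge H(z)-1$ once $H(z)$ is above a threshold, so $e^{-H(p)}$ can be as large as $e^{-H(z)+1}$. Thus the condition you must guarantee is $\rho(p,Z)\ge e^{-H(z)+1}$, not $\ge e^{-H(z)-1}$ as written: the target radius must be \emph{larger} than $e^{-H(z)}$, not smaller. The packing/area count still goes through with the corrected radius, since it only shifts the exponent by an additive constant, but as stated the implication ``hence $p\in\Omega_H$'' is not justified.

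Second, and more seriously, ``each within bounded pseudohyperbolic distance of $z$'' is not enough to make the balance $2(1-c)>c_2$ solvable in $c>0$. If ``bounded'' merely means $\rho\le r_0$ for a fixed $r_0<1$, Harnack gives $\inf_Q H\le\frac{1+r_0}{1-r_0}H(z)$, and $\frac{1+r_0}{1-r_0}\ge2$ already for $r_0\ge\frac13$, which kills the inequality. What saves the argument is that the Whitney squares meeting $D(z,\tfrac12 e^{-cH(z)})$ contain a point at distance $\lesssim e^{-cH(z)}$ from $z$; since $\inf_Q H\le H(w)$ for that point $w$ and Harnack over distance $e^{-cH(z)}$ loses only a factor $1+O(e^{-cH(z)})$, one gets $\inf_Q H\le(1+o(1))H(z)$ as $H(z)\to\infty$, i.e.\ $c_2$ can be taken arbitrarily close to $1$ (above the threshold $H_*$). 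That is exactly what makes $c<\tfrac12$ admissible, and it is also exactly where \eqref{control} is used at full strength. You should make this explicit; without it the counting may fail.

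Apart from these two points, the core mechanism — decompose $-\log|B(z)|$ into a far part controlled by Proposition~\ref{propsep}, perturb $z$ to a nearby $p\in\Omega_H$ using a volume count driven by \eqref{control}, split the near zeros by $\rho(p,z_k)\gtrless e^{-cH(z)}$, and absorb the factor $CH(z)$ in the second group against the bound $\#(\text{grp 2})\lesssim u(p)/H(z)$ — is correct, and your summation for group $1$ (using $\sum\log\rho(p,z_k)^{-1}\le u(p)$ together with $\#(\text{grp 1})\lesssim u(p)$ for the accumulated $\log2$'s) is right. The low-$H$ case $H(z)\le H_*$ is handled correctly by an absolute count. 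Once the two issues above are fixed, the proof closes as you indicate, yielding $-\log|B|\le h_0+C'u+C''$ on $\Omega_{CH}$.
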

Of course, the result is non-trivial for $C>1$ only. Note that the harmonic 
functions that verify 
the hypothesis of Theorem \ref{invert_bound}(b) will verify \eqref{control}.

It would be nice, given any Blaschke product, to determine the class $\mathcal 
H(B)$. Although this seems out of
reach in general, a sufficient condition is known. Given a dyadic Whitney 
square $Q$, let $z(Q)$ denote its center. 

\begin{theorem}
\label{suffhb}
Let $B$ be a Blaschke product with zero set $Z$. Let $\mathcal{A}$ be the 
collection of dyadic Whitney squares 
$Q$ such that $\#(Z\cap Q)>0$. Let $H\in\Har_+(\D)$. Assume that the map
$ z_Q \mapsto \#(Z\cap Q) \cdot H(z(Q))$ for any $Q\in\mathcal{A}$ (and $0$ 
elsewhere) admits a harmonic majorant. 
Then $H \in \mathcal H(B)$.
\end{theorem}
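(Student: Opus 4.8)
The plan is to produce, for each admissible $H$, an explicit positive harmonic majorant of $-\log|B|$ on the set $\Omega_H:=\{z\in\D:\rho(z,Z)\ge e^{-H(z)}\}$; by the definition of $\mathcal H(B)$ this is exactly what must be shown. I would write $-\log|B(z)|=\sum_k\log\frac1{\rho(z,z_k)}$ and split the sum according to whether $\rho(z_k,z)\ge 1/2$ or $\rho(z_k,z)<1/2$, bounding the two pieces separately.

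The far piece is handled directly by Proposition~\ref{propsep} with $\delta=1/2$: it gives a single $h\in\Har_+(\D)$, depending only on the Blaschke sequence $Z$, such that $\sum_{k:\rho(z_k,z)\ge 1/2}\log\frac1{\rho(z_k,z)}\le h(z)$ for all $z\in\D$. No use of the hypothesis on $H$ is needed here, and in particular no restriction to $\Omega_H$.

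For the near piece, fix $z\in\Omega_H$. Every zero $z_k$ with $\rho(z_k,z)<1/2$ satisfies $\rho(z_k,z)\ge\rho(z,Z)\ge e^{-H(z)}$, so $\log\frac1{\rho(z_k,z)}\le H(z)$, and hence the near piece is at most $H(z)\cdot\#\{k:z_k\in D(z,1/2)\}$. Now $D(z,1/2)$ is a pseudohyperbolic disk of fixed radius, so it is contained in a bounded number of the dyadic Whitney squares from \eqref{whit} — say at most $M$, with $M$ absolute — each of which, if it meets $D(z,1/2)$ and contains a zero, lies in $\mathcal A$ and has its distinguished point $z(Q)$ within pseudohyperbolic distance at most some absolute $c_0<1$ of $z$. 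Letting $G\in\Har_+(\D)$ be the assumed majorant of the map $z(Q)\mapsto\#(Z\cap Q)\,H(z(Q))$, Harnack's inequalities (cf.\ \eqref{harnack}) applied on the ball $\{\rho(\cdot,z)\le c_0\}$ give $H(z)\lesssim H(z(Q))$ and $G(z(Q))\lesssim G(z)$; hence for each such square $\#(Z\cap Q)\,H(z)\lesssim\#(Z\cap Q)\,H(z(Q))\le G(z(Q))\lesssim G(z)$. Summing over the at most $M$ relevant squares, the near piece is $\lesssim M\,G(z)\lesssim G(z)$. Combining, $-\log|B(z)|\le h(z)+C\,G(z)$ on $\Omega_H$, and $h+CG\in\Har_+(\D)$ is the desired majorant, so $H\in\mathcal H(B)$.

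The only genuinely technical step is the geometric bookkeeping just sketched: that a pseudohyperbolic disk of fixed radius meets only boundedly many Whitney squares, and that the distinguished points $z(Q)$ of those squares are uniformly close to the center in the pseudohyperbolic metric. This is the routine comparison between the pseudohyperbolic metric and the dyadic Whitney decomposition; once it is granted, everything else reduces to Proposition~\ref{propsep} and two applications of Harnack's inequality.
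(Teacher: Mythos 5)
Your proof is correct and complete. Splitting $-\log|B|$ into far and near parts, invoking Proposition~\ref{propsep} for the far part, bounding the near part on $\{z\in\D:\rho(z,Z)\ge e^{-H(z)}\}$ by $H(z)$ times the local zero count, and converting $H(z)\,\#(Z\cap Q)$ into $G(z)$ at the relevant Whitney centers via two Harnack estimates produces a positive harmonic majorant $h+CG$ of $-\log|B|$ on that set, which is exactly what $H\in\mathcal H(B)$ requires. The survey states Theorem~\ref{suffhb} without proof (it is drawn from \cite{NT}), so there is no in-text argument here to compare against; your reasoning uses the paper's own tools (Proposition~\ref{propsep}, Harnack's inequality, the Whitney decomposition from~\eqref{whit}) in what is the natural way given those tools.
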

Notice that we impose no direct restriction on the values of $H$ in the dyadic 
squares where no zero of $B$ 
is present. Moreover, there is a class of Blaschke products for which this 
sufficient condition is also necessary 
\cite[Section 2]{NT}.

\section{The Smirnov class}\label{smirnov}

The usual definition of the Smirnov class is 
\[
 \N_+=\bigl\{ f\in\N : \lim_{r\nearrow 1}\frac 1{2\pi}\int_0^{2\pi}\log_+|f(r 
\eit)|\, d\theta=
 \frac 1{2\pi} \int_0^{2\pi}\log_+|f^*(r\eit)|\, d\theta\bigr\} .
\]

However, as everywhere else, we take the equivalent definition
in terms of harmonic majorants.

\begin{definition}
 A harmonic function $h$ is called \emph{quasi-bounded} if it is the Poisson 
integral of a measure absolutely
continuous with respect to the Lebesgue measure on the circle, i. e. $h=P[w]$ 
for some $w\in L^1(\partial\D)$.
Let $QB(\D)$ denote the set of quasi-bounded harmonic functions, 
and $QB_+(\D)$ the cone of those which are nonnegative.
\end{definition}

\begin{definition}
The \emph{Smirnov class} $\mathcal N_+ $ is the set of $f\in\nev$ such that 
$\log|f|$ 
has a quasi bounded harmonic majorant.
Equivalently, it is the set of $f\in\nev$ such that
$\log|f(z)|\le \mathcal P(\log|f^*|)(z)$.
\end{definition}

In terms of the factorization \eqref{factor}, Smirnov functions are the 
Nevanlinna 
functions with singular factor $S_2\equiv 1$. Then, any $f\in\N_+$ has a 
factorization 
of the form
\[
 f=\alpha\frac {B S_1 \mathcal O_1}{\mathcal O_2},
\]
where $\mathcal O_1, \mathcal O_2$ are outer with $\|\mathcal O_1\|_\infty, 
\|\mathcal O_2\|_\infty\leq 1$,
$S_1$ is singular inner, $B$ is a Blaschke product and $|\alpha|=1$.

In particular, unlike Nevanlinna functions, the inverse of a nonvanishing 
Smirnov function is not necessarily a Smirnov function. Actually, 
the class $\N^+$ is the subalgebra of $\N$
where the invertible functions are exactly the outer functions. 

Other aspects of $\N_+$ are better than in $\N$. For instance, $(\N_+, d)$ is a 
topological vector space (see \cite{SS}). 
It can also be represented as the union of certain weighted Hardy spaces 
$\H^2(w)$, and hence given the inductive limit 
topology \cite{McC}. 

According to the above definition, we expect the results explained in the 
previous sections to hold as well for $\N_+$
as soon as $H\in\Har_+(\D)$ is replaced by $H\in QB_+(\D)$. This is indeed the 
case most of the times, but not always. 

Let us briefly comment how the previous results are modified or adapted to 
$\N_+$.

\subsection{Interpolation in $\N_+$}\label{S-interpolation}
Theorem~\ref{nevinter} holds as expected, replacing $\N$ by $\N_+$ and 
$h\in\Har_+(\D)$ is replaced by $h\in QB_+(\D)$.
As for Corollary~\ref{interpolation-geometric}(a), condition \eqref{blabdd} has 
to be replaced by
 \[
  \lim_{k\to\infty} (1-|z_k|) \log | B_{k} (z_k)|^{-1}=0.
 \]
This is in accordance with the fact that for $h\in QB(\D)$
\[
 NT \lim_{z\to\eit}(1-|z|) h(z)=0,\quad \textrm{for all $\theta\in [0,2\pi)$} .
\]

As for (b), when $\mu_Z=\sum_k (1-|z_k|) \delta_{z_k}$ has bounded Poisson 
balayage,
the same argument as before shows that 
$Z$ is Smirnov interpolating if and only if \eqref{bal} holds. In particular, 
in this situation Nevanlinna and Smirnov interpolating sequences are the same.

Theorem~\ref{alt-interpolation} and Corollary~\ref{stability} work according to 
the general substitution explained above.
Same thing with Theorem~\ref{divided differences}.

As in the Nevanlinna case, the existence of peak functions with uniform bounds 
is not sufficient for interpolation in the Smirnov class. But the situation in this setting
is a bit more subtle. It is known that if $f\in \N_+$
there exists a convex, increasing 
function $\psi:[0,+\infty)\longrightarrow[0,+\infty)$ (depending on $f$), with 
$\lim\limits_{t\to +\infty} \psi(t)/t=+\infty$, and such that
\[
N_\psi(f):=\int_{0}^{2\pi}
 \psi\left[  \log(1+|f^*(e^{i\theta})| \right)]\;  \frac{d\theta}{2\pi}<+\infty\ 
.
\]
Accordingly, when $Z=(z_k)_k$ is interpolating for the Smirnov class there exist $\psi$ as above, 
$C>0$ and functions $f_k\in \N_+$ peaking at $z_k$ (i.e, $f_k$ for which \eqref{1i0} holds) and with
$N_\psi(f_k)\leq C$.

The converse fails: there exist sequences $Z$ which are not Smirnov interpolating but  
for which there exist $\psi$, $C$ and functions $f_k\in\N_+$ peaking at $z_k$ and with
$N_\psi(f_k)\leq C$ (see \cite[Theorem 1.2]{MM}).

\subsection{Sampling sets for $\N_+$}\label{S-sampling}
Sampling (or determination) sets for the Smirnov class can be defined as in 
Section \ref{sampling}, 
with the standard replacement of $\N$ by $\N_+$ and $\Har_+(\D)$ by $QB_+(\D)$. 
Then the analogue of 
Theorem~\ref{equivalencia} holds, but much more can be said.

By Theorem~\ref{thbsz}, for $\Lambda\subset\D$ to be sampling for $\N_+$ it is 
necessary that
$|NT(\Lambda)|=2\pi$. But Smirnov functions are determined by their boundary 
values, so 
this condition is also sufficient (see \cite[Theorem 2.3]{MT}).

\subsection{Finitely generated ideals}\label{S-ideals}

Mortini's solution to the corona problem given in Theorem~\ref{N-corona} works 
also for $\N_+$, 
with the usual substitution explained above. The same thing happens with 
Theorem~\ref{MainThm}.

The situation changes for the stable rank. Notice that the example showing that 
the stable rank of 
$\N$ has to be at least two does not work for $\N_+$, because the function
$\Re(\frac{1+z}{1-z})$ is not quasi-bounded (it is the inverse of the Poisson 
integral of a delta measure on 1).
As far as we know the study of the stable rank of $\N_+$ is wide open.

\subsection{Corona problem in quotient algebras}\label{S-wep}

As mentioned before, the class $\N_+$ is an algebra where the invertible 
functions
are exactly the outer functions. So any quotient of the Smirnov class by a 
principal
ideal (which are the only closed ideals \cite[Theorem 2]{RS}) can be 
represented by 
$\N^+_I:=\N_+/I\N_+$, where $I$ is  an inner function.

Carefully following the proof of Theorems~\ref{corona_bound} and \ref{finite_u} 
above, 
we see that the natural analogues hold for $\N_+$, with the usual replacement 
of $\Har_+(\D)$ by $QB(\D)$.

This explains the situation for quotients of type $\N^+_B$, with $B$ Blaschke 
product, but not 
the general situation of quotients by inner functions. Here
we say that an inner function $I$ with zero set $Z$ satisfies the Smirnov WEP 
if and only if for any $H_1\in QB_+(\D)$ there exists 
$H_2\in QB_+(\D)$ such that $|B(z)|\geq e^{-H_2(z)}$ for any $z\in\D$ such that 
$\rho(z,Z)\geq e^{-H_1(z)}$.

Notice that if it does and $I= BS$, where $B$ is a Blaschke product
and $S$ a singular inner function, then since $|I|\le |B|$ and 
$I^{-1}\{0\}=B^{-1}\{0\}$, 
it is clear that  $B$ must be Smirnov WEP as well.  One may wonder which 
singular inner functions are 
admissible as divisors of Smirnov WEP functions, in analogy to the study begun 
in \cite{Bo1}, \cite{BNT}.
It turns out that there aren't any besides the constants.

\begin{theorem}
\label{smirnovwep}
Let $I=BS$ be an inner function, where $B$ is a Blaschke product
and $S$ is singular inner. Then $I$ satisfies the Smirnov WEP if and only if 
$S=e^{i\theta}$ (a unimodular constant)
and $B^{-1}\{0\}$
is a finite union of Smirnov interpolating sequences.

As a consequence, if $I$ is an inner function, the Corona Property holds for 
$\N^+_I$
if and only if $I$ is a Blaschke product with its zero set being a  finite 
union of Smirnov interpolating sequences.
\end{theorem}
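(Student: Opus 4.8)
The plan is to prove both implications of the equivalence and then deduce the corollary from the Smirnov version of ``Corona Property $\Longleftrightarrow$ WEP''. The implication ``$\Leftarrow$'' is routine: if $S=e^{i\theta}$ and $Z:=B^{-1}\{0\}=\bigcup_{j=1}^N Z_j$ with each $Z_j$ Smirnov interpolating, then $I=e^{i\theta}B$ has the Smirnov WEP, by the Smirnov analogue of the computation displayed right after Theorem~\ref{corona_bound}: multiply the factor bounds $|B_j(z)|\ge e^{-H_j(z)}\rho(z,Z_j)$, $H_j\in QB_+(\D)$, furnished by the Smirnov form of Theorem~\ref{alt-interpolation}(b). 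For ``$\Rightarrow$'', suppose $I=BS$ has the Smirnov WEP. Since $|I|\le|B|$ and $B^{-1}\{0\}=I^{-1}\{0\}=:Z$, the defining lower bound for $I$ is also a lower bound for $B$, so $B$ has the Smirnov WEP; by the Smirnov analogue of Theorem~\ref{finite_u}, $Z$ is a finite union of $N$ Smirnov interpolating sequences. In particular there is $G\in QB_+(\D)$, which we may take $\ge 1$, such that for each of the $N$ pieces the pseudohyperbolic disks $D(z,e^{-G(z)})$ over $z$ in that piece are pairwise disjoint (Smirnov analogue of Definition~\ref{weaksep}). Writing $S=\exp\{-\int\frac{\xi+z}{\xi-z}\,d\mu(\xi)\}$ with $\mu$ a positive singular measure, everything reduces to proving $\mu=0$.

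Set $u:=-\log|S|=P[\mu]$, a positive harmonic function. The strategy is to produce a \emph{global} quasi-bounded majorant of $u$ on $\D$: since a positive harmonic function dominated by a quasi-bounded one is itself quasi-bounded (compare the representing measures, obtained as weak-$*$ limits of boundary dilations), such a majorant forces $\mu$ to be absolutely continuous, hence $\mu=0$ because $\mu$ is singular. Fix a large constant $M=M(N)$ to be chosen, put $H_1:=MG\in QB_+(\D)$ and $E_t:=\{z\in\D:\rho(z,Z)\ge e^{-tH_1(z)}\}$. Let $\Omega:=\bigcup_k D(z_k,4e^{-H_1(z_k)})$. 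Using $H_1\ge M$ and Harnack one checks that $\D\setminus\Omega\subset E_c$ and $\partial\Omega\subset E_c$ for an absolute constant $c$ (a point outside all the disks $D(z_k,4e^{-H_1(z_k)})$ has $\rho(\,\cdot\,,z_k)\gtrsim e^{-H_1(\,\cdot\,)}$ for the nearby $z_k$, using comparability of $H_1$, and $\gtrsim 1$ for the far ones). Applying the Smirnov WEP of $I$ to the quasi-bounded function $cH_1$ gives $H_2\in QB_+(\D)$ with $u\le -\log|I|\le H_2$ on $E_c$, and in particular on $\partial\Omega$ and on $\D\setminus\Omega$.

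The remaining point, and the main obstacle, is to transfer this bound into the interior of $\Omega$, and this is where the structure of $Z$ as a finite union of weakly separated sequences enters. The key claim is that for $M=M(N)$ large every connected component of $\Omega$ contains at most $N+1$ points of $Z$. Suppose $\zeta_0,\dots,\zeta_L\in Z$ ($L\le N$) form a chain with consecutive disks $D(\zeta_j,4e^{-H_1(\zeta_j)})$ overlapping. As $H_1\ge M$, Harnack keeps the values $H_1(\zeta_j)$ between $H_1(\zeta_0)/2$ and $2H_1(\zeta_0)$ along such a short chain, so $\rho(\zeta_0,\zeta_L)\le C(N)\,e^{-H_1(\zeta_0)/2}=C(N)\,e^{-MG(\zeta_0)/2}$; but if $\zeta_0$ and $\zeta_L$ lay in the same one of the $N$ pieces, disjointness of the disks $D(\,\cdot\,,e^{-G(\,\cdot\,)})$ would force $\rho(\zeta_0,\zeta_L)\ge e^{-G(\zeta_0)}$, and since $G\ge 1$ the resulting inequality $e^{-G(\zeta_0)}\le C(N)\,e^{-MG(\zeta_0)/2}$ fails once $M$ is large. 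Hence no chain of length $\le N$ repeats a piece, so by pigeonhole there is no chain longer than $N$; thus each component of $\Omega$ has at most $N+1$ points of $Z$ and therefore pseudohyperbolic diameter $\le C(N)e^{-M/2}<\tfrac12$. On each component $\Omega_\alpha$, $H_2$ varies by a bounded factor (Harnack), so the maximum principle for the harmonic function $u$ on $\Omega_\alpha$ gives $\sup_{\Omega_\alpha}u\le\max_{\partial\Omega_\alpha}H_2\le C(N)\,H_2(w)$ for every $w\in\overline{\Omega_\alpha}$. Combining with $u\le H_2$ on $\D\setminus\Omega$ yields $u\le C(N)H_2$ on all of $\D$, with $C(N)H_2\in QB_+(\D)$; by the reduction above $\mu=0$, so $S$ is a unimodular constant, which completes the equivalence.

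For the corollary, carrying out the proofs of Theorems~\ref{bounded-wep} and \ref{corona_bound} with $\Har_+(\D)$ replaced throughout by $QB_+(\D)$ shows that, for an arbitrary inner function $I$, the Corona Property for $\N^+_I=\N_+/I\N_+$ is equivalent to $I$ having the Smirnov WEP. Combined with the equivalence just established, this gives precisely the statement: $\N^+_I$ has the Corona Property if and only if $I$ is a Blaschke product whose zero set is a finite union of Smirnov interpolating sequences.
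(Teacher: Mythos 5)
Your proof is correct and follows precisely the strategy the paper indicates (``check that the WEP condition forces $-\log|S|$ to have a quasi-bounded harmonic majorant''), filling in the details that the paper leaves as a one-sentence sketch.

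One minor logical slip worth flagging: from the statement ``no chain of length $\le N$ repeats a piece'' you conclude by pigeonhole that there is no chain longer than $N$, which is fine, but the further claim ``thus each component of $\Omega$ has at most $N+1$ points of $Z$'' does not follow (a star-shaped component could have graph-diameter $2$ but many vertices). What the chain bound actually gives is a bound on the \emph{graph diameter} of each component, hence on its pseudohyperbolic diameter, which is exactly what you use in the Harnack/maximum-principle step. Since the point count is never needed and the diameter bound is what is invoked, this does not affect the validity of the argument; just replace the vertex-count assertion by the diameter assertion directly. (If one does want a bound on the number of points, one also needs the branching-factor bound: for $M$ large, two distinct points of the same piece $Z_i$ cannot both lie within $O(e^{-H_1(z_k)})$ of a given $z_k$, because the weak separation radius $e^{-G}$ dominates $e^{-MG}$; combined with the depth bound this yields a bound of order $N^N$, still finite.) With that cosmetic correction the proof is sound, and the reduction to the singular part via comparison of representing measures is carried out correctly.
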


To see this one has to check that the WEP condition forces $-\log |S|$ to have 
a quasi-bounded harmonic majorant,
which is only possible when $S$ is a constant.

\end{document}